\DeclareMathAlphabet{\mathboondoxfrak}{U}{BOONDOX-frak}{m}{n}
\DeclareMathOperator{\Cl}{cl}
\DeclareMathOperator{\Gal}{Gal}
\DeclareMathOperator{\GL}{GL}
\DeclareMathOperator{\Hom}{\textnormal{\textbf{Hom}}}
\DeclareMathOperator{\rank}{rank}
\DeclareMathOperator{\PGL}{PGL}
\DeclareSymbolFont{cyrletters}{OT2}{wncyr}{m}{n}
\def\Yint#1{\mathchoice
    {\YYint\displaystyle\textstyle{#1}}
    {\YYint\textstyle\scriptstyle{#1}}
    {\YYint\scriptstyle\scriptscriptstyle{#1}}
    {\YYint\scriptscriptstyle\scriptscriptstyle{#1}}
      \!\int}
\def\YYint#1#2#3{{\setbox0=\hbox{$#1{#2#3}{\int}$}
    \vcenter{\hbox{$#2#3$}}\kern-.52\wd0}}
\def\mint{\Yint\times}
\newcommand{\A}{\mathbb{A}}
\newcommand{\bbm}{\begin{bm}}
\newcommand{\bes}{\begin{center}\begin{tikzcd}[ampersand replacement=\&]}
\newcommand{\bpf}{\begin{proof}}
\newcommand{\bsm}{\big(\begin{smallmatrix}}
\newcommand{\C}{\mathbb{C}}
\newcommand{\ebm}{\end{bm}}
\newcommand{\ees}{ \end{tikzcd} \end{center}}
\newcommand{\epf}{\end{proof}}
\newcommand{\esm}{\end{smallmatrix}\big)}
\newcommand{\foodnote}[1]{\ignorespaces}
\newcommand{\icla}[1]{\left\lbrace#1\right\rbrace}
\newcommand{\indi}{\mathbbm{1}}
\newcommand{\Ind}{\textnormal{Ind}}
\newcommand{\inv}{^{-1}}
\newcommand{\ipa}[1]{\left(#1\right)}
\newcommand{\iy}{\infty}
\newcommand{\mbT}{\mathbb{T}}
\newcommand{\mcA}{\mathcal{A}}
\newcommand{\mcC}{\mathcal{C}}
\newcommand{\mcE}{\mathcal{E}}
\newcommand{\mcF}{\mathcal{F}}
\newcommand{\mcG}{\mathcal{G}}
\newcommand{\mcK}{\mathcal{K}}
\newcommand{\mcO}{\mathcal{O}}
\newcommand{\mcU}{\mathcal{U}}
\newcommand{\mfgltwo}{\mathfrak{gl}_2}
\newcommand{\mfg}{\mathfrak{g}}
\newcommand{\mfH}{\mathfrak{H}}
\newcommand{\mfo}{\mathfrak{o}}
\newcommand{\mfp}{\mathfrak{p}}
\newcommand{\mfq}{\mathfrak{q}}
\newcommand{\ord}{\textnormal{ord}}
\newcommand{\oti}{\otimes}
\newcommand{\ovl}{\overline}
\newcommand{\PP}{\mathbb{P}}
\newcommand{\Q}{\mathbb{Q}}
\newcommand{\ra}{\rightarrow}
\newcommand{\R}{\mathbb{R}}
\newcommand{\sicc}{\Sigma_{\C}^\C(K/F)}
\newcommand{\sirc}{\Sigma_{\R}^\C(K/F)}
\newcommand{\sirr}{\Sigma_{\R}^\R(K/F)}
\newcommand{\siun}{\Sigma_{\tno{un}}(K/F)}
\newcommand{\tbf}{\textbf}
\newcommand{\ti}{\times}
\newcommand{\tno}{\textnormal}
\newcommand{\udl}{\underline}
\newcommand{\uhp}{\mathfrak{H}}
\newcommand{\Z}{\mathbb{Z}}
\newtheorem{theorem}{Theorem}[section]
\newtheorem{lemma}[theorem]{Lemma}
\newtheorem{corollary}[theorem]{Corollary}
\newtheorem{remark}[theorem]{Remark}
\newtheorem{hypothesis}[theorem]{Hypothesis}
\newtheorem{conjecture}[theorem]{Conjecture}
\newtheorem{proposition}[theorem]{Proposition}
\newenvironment{bm}{ \begin{pmatrix} }{ \end{pmatrix} } 
\begin{document}
\title{Exceptional zero formulas for anticyclotomic $p$-adic $L$-functions}
\author{V\'ictor Hern\'andez Barrios and Santiago Molina Blanco}
\maketitle
\begin{abstract}
In this note we define anticyclotomic $p$-adic measures attached to a finite set of places $S$ above $p$, a modular elliptic curve $E$ over a general number field $F$ and a quadratic extension $K/F$.
We study the exceptional zero phenomenon that arises when $E$ has multiplicative reduction at some place in $S$.
In this direction, we obtain $p$-adic Gross-Zagier formulas relating derivatives of the corresponding $p$-adic L-functions to the extended Mordell-Weil group of $E$. 
Our main result 
uses the recent construction of plectic points on elliptic curves due to Fornea and Gehrmann and generalizes their main result in \cite{fornea2021plectic}. We obtain a formula that computes the $r$-th derivative of the $p$-adic L-function, where $r$ is the number of places in $S$ where $E$ has multiplicative reduction, in terms of plectic points and Tate periods of $E$.
\end{abstract}

\begin{small}
\tableofcontents
\end{small}

\section{Introduction}
The Birch and Swinnerton-Dyer conjecture relates the classical $L$-function $L(E,K,s)$ of an elliptic curve $E$ defined over a field $K$ with its group of rational points $E(K)$.
It can be stated by means of an equality which involves several invariants of the elliptic curve.
In particular, it implies that the algebraic rank, the $\Z$-rank of $E(K)$, is equal to the analytic rank, the order of vanishing of $L(E,K,s)$ at $s=1$.
Some of the progress in proving partial versions of this conjecture use a set of special points, called \emph{Heegner} points, that emerge when $K$ is a totally imaginary quadratic extension of a totally real field $F$ and $E$ is modular and defined over $F$. Indeed, modularity provides a modular parametrization of $E$, namely, a morphism defined over $F$ between a Shimura curve $X$ and the elliptic curve $E$.
Therefore, Heegner points can be constructed as images through modular parametrizations of the CM points of the Shimura curve $X$.

We can explain the complex construction of Heegner points with a little more detail: The elliptic curve $E$ is associated with an automorphic form $f_E$ of weight 2 for the group of units of a quaternion algebra over $F$ that splits at a single archimedean place. 
The group of complex points $E(\C)$ can be described as the quotient $\C/\Lambda$, where $\Lambda$ is the lattice of periods of $f_E$.
Similarly, the set of complex points $X(\C)$ can be described as a finite union of quotients $\mfH/\Gamma$, where $\mfH$ is the Poincar\'e upper-half plane, and $\Gamma$ is an arithmetic subgroup associated to the quaternion algebra. Finally, the modular parametrization is an extension of the natural morphism
\begin{equation}\label{complexunifor}
\tno{Div}^0(\mfH/\Gamma)\longrightarrow\C/\Lambda,\qquad (\tau_1-\tau_2)\longmapsto\int_{\tau_2}^{\tau_1}f_E(z)dz,
\end{equation}
where $\tno{Div}^0$ denotes the set of degree zero divisors. A Heegner point is the image through the modular parametrization of a point $\tau_K\in\mfH$ whose stabilizer in $\Gamma$ is isomorphic to a subgroup in $K^\ti$.

This classical construction can be emulated $p$-adically. Indeed, if $E$ has split multiplicative reduction modulo $\mfp\mid p$ and $K$ does not split at $\mfp$, the group of points $E(K_\mfp)$ can be described as the quotient $K_\mfp^\ti/q_{E,\mfp}^\Z$, where $q_{E,\mfp}\in F_\mfp^\ti$ is \emph{Tate's period}. Similarly, the set $X(K_\mfp)$ admits a $p$-adic 
uniformization as union of quotients $\mfH_\mfp/\Gamma_B$, where $\mfH_\mfp=K_\mfp\setminus F_\mfp$ is the $p$-adic upper half plane and $\Gamma_B$ is a $\mfp$-arithmetic group in a definite quaternion algebra $B$ over $F$. 
By exchanging the usual integral by a multiplicative integral $\mint$, we can describe the modular parametrization as an extension of the natural morphism
\begin{equation}\label{p-adicunifor}
\tno{Div}^0(\mfH_\mfp/\Gamma_B)\longrightarrow K_\mfp^\ti/q_{E,\mfp}^\Z,\qquad (\tau_1-\tau_2)\longmapsto\mint_{\tau_2}^{\tau_1}\phi_E(z)dz,
\end{equation}
where $\phi_E$ is the \emph{harmonic cocycle} attached to $E$. Again the Heegner point is the image through this modular parametrization of a point $\tau_\mfp\in\mfH_\mfp$ whose stabilizer in $\Gamma_B$ is isomorphic to a subgroup of $K^\ti$.

In the seminal work \cite{Darmon2001}, Darmon introduced a new kind of points that appear when $F=\Q$ and $K$ is a real quadratic field. In analogy with their imaginary counterparts Heegner points, these \emph{Darmon points} are conjectured to be defined over abelian extensions of $K$ and their heights are supposed to be related with the derivatives of twists of $L(E,K,s)$ at $s=1$.
The idea of Darmon's construction relies on the treatment of the above $p$-adic construction of Heegner points in purely group theoretical terms. Indeed, while the morphism \eqref{p-adicunifor} can be seen as an element of $H^0(\Gamma_B,\Hom(\tno{Div}^0(\mfH_\mfp),E(K_\mfp)))$, Darmon was able to construct an element in $H^1(\Gamma_{\tno{M}_2(\Q)},\Hom(\tno{Div}^0(\mfH_p),E(K_p)))$ using the local nature of the multiplicative integral. Such a cohomology class can be extended to the full group of divisors $\tno{Div}(\mfH_p)$,
thus, the image of a previously characterized $\tau_\mfp\in\mfH_\mfp$ gives rise to a class 
$\mint_{\tau_\mfp}\phi_E\in H^1(\mcO_+^p,E(K_p))$, where $\mcO_+^p$ 
is the stabilizer of $\tau_\mfp$ in $\Gamma_{\tno{M}_2(\Q)}$.
On the other hand, the nature of the group of $p$-units of $K$ provides a natural fundamental class in $\eta^p\in H_1(\mcO_+^p,\Z)$. Hence, the Darmon point is given by the cap product $P_K=\mint_{\tau_\mfp}\phi_E\cap\eta^p\in E(K_p)$.

Darmon's construction was generalized to many other situations (see \cite{Dasgupta}, \cite{Greenberg}, \cite{Gartner} and \cite{GMS}), each of which was relaxing requirements on $F$, $K$ and $E$. Finally, the second author contributed to a joint work with X. Guitart and M. Masdeu \cite{guitart2017automorphic} where a completely general construction of Darmon points was given. This implies that Darmon points are available for arbitrary base fields $F$, arbitrary quadratic extensions $K/F$, and arbitrary modular elliptic curves over $F$ admitting $p$-adic Tate uniformizations (in fact, arbitrary modular abelian varieties and both $p$-adic and archimedean uniformizations are considered in \cite{guitart2017automorphic}).  

Very recently in \cite{fornea2021plectic}, M. Fornea and L. Gehrmann adapt the construction of Darmon points of \cite{guitart2017automorphic} to compute elements in certain completed tensor products of groups $E(K_\mfp)$, for primes $\mfp$ of (split and non-split) multiplicative reduction. These elements, called \emph{plectic points}, are conjectured to come from determinants of points defined over abelian extensions of $K$. Moreover, they are also conjectured to be non-zero in rank $r\leq[F:\Q]$ situations. These conjectures, that can be found in \S \ref{section:conj}, are backed by some numerical evidence. Even with the restriction $r\leq[F:\Q]$ and the fact that plectic points are supposed to be zero when $E$ is the extension of an elliptic curve defined over a smaller field, this new construction could shed a lot of light on our understanding of the Birch and Swinnerton-Dyer conjecture in rank $r\geq 2$ situations. 

Parallel to the research on new constructions of points in the Mordell-Weil group $E(K)$, the theory of $p$-adic L-functions has been developing. $p$-adic L-functions are $p$-adic analogues of the classical $L$-functions $L(E,K,s)$. They can be thought of as $p$-adic analytic functions $L_p(E,K,s):\C_p\rightarrow\C_p$ that are usually defined by means of a $p$-adic measure $\mu_{E,K}$ of a Galois group $\mcG_p$ isomorphic to $\Z_p$:
\[
L_p(E,K,s)=\int_{\mcG_p}\exp_p(s\ell(\gamma))d\mu_{E,K}(\gamma),\qquad \ell:\mcG_p\stackrel{\simeq}{\longrightarrow}\Z_p.
\]
The link between classical and $p$-adic L-functions is the so-called \emph{interpolation formula}. It relates the image through $\mu_{E,K}$ of certain characters $\chi:\mcG_p\ra\C_p$ to critical values of twists of the classical L-function $L(E,\bar \chi,s)$, where $\bar\chi$ is the automorphic character associated with $\chi$ via Class Field Theory. Usually, the space of characters where the interpolation property applies is dense, hence these critical values characterize $\mu_{E,K}$, and consequently $L_p(E,K,s)$.

The study of $p$-adic L-functions took a new dimension after the work of Mazur, Tate and Teitelbaum \cite{MTT}, who in 1986 formulated a $p$-adic Birch and Swinnerton-Dyer conjecture  over $\Q$ relating a (cyclotomic) $p$-adic L-function and the \emph{extended Mordell-Weil group} $\bar E(\Q)$. Such extended Mordell-Weil group $\bar E(K)$, for any number field $K$, is the concatenation of the classical Mordell-Weil group $E(K)$ and the group generated by Tate periods $q_{E,\mfp}$ at primes $\mfp\mid p$ where $E$ has split multiplicative reduction. Indeed, the presence of these split multiplicative primes gives rise to
\emph{exceptional zero} phenomena, by which the $p$-adic $L$-function gets additional zeroes. 

The ideas of \cite{MTT} have been generalized with success in other contexts.
For instance, Bertolini and Darmon defined in \cite{Bertolini1998} a new (anticyclotomic) $p$-adic L-function associated with an elliptic curve $E/\Q$ and an imaginary quadratic field $K$. The $p$-adic description of the geometry of the Shimura curve $X$ over $K_p$ allowed them to attack a new $p$-adic Birch and Swinnerton-Dyer ($p$-adic BSD) in this anticyclotomic setting. One of their main results \cite[Theorem B]{Bertolini1998} relates a Heegner point with the derivative of the $p$-adic $L$-function, obtaining a $p$-adic analogue to the celebrated Gross-Zagier formula.

In order to deal with the case where $K/\Q$ is real quadratic one has to slightly change the point of view. In this situation, the corresponding anticyclotomic $p$-adic Galois group $\mcG_{K,p}$ is finite, and therefore no analytic $L_p(E,K,s)$ can be defined. But following the philosophy of Mazur-Tate in \cite{MT}, we can rewrite the $p$-adic BSD working directly with the measure $\mu_{E,K}$ instead of the analytic function $L_p(E,K,s)$. In fact, the space of measures $\tno{Meas}(\mcG_{K,p},\C_p)$ has a natural ring structure, called \emph{Iwasawa algebra}. The order of vanishing of $L_p(E,K,s)$ at $s=0$ is precisely the maximum $r$ for which $\mu_{K,E}\in I_1^r$, where for any character $\chi$ the augmentation ideal $I_\chi$ is the kernel of the ring homomorphism $\varphi_\chi$:
\[
I_\chi=\ker\ipa{\tno{Meas}(\mcG_{K,p},\C_p)\stackrel{\varphi_\chi}{\longrightarrow}\C_p},\qquad\varphi_\chi(\mu)=\int_{\mcG_{K,p}}\chi d\mu.
\]
Moreover, finding the $r$th derivative of $L_p(E,K,s)$ at $s=0$ amounts to computing the image of $\mu_{K,E}$ in $I_1^r/I_1^{r+1}$. With this new formalism in mind, Bertolini and Darmon were able to obtain in \cite{Bertolini2007} an analogous $p$-adic Gross-Zagier formula relating the class of $\mu_{K,E}$ in $I_1/I_1^{2}$ to a Darmon point when $K/\Q$ was real quadratic. Indeed, the Artin map of Class Field Theory provides a morphism 
\begin{equation}\label{defrecintro}
    \tno{rec}_p:K_p^\times/\Q_p^\times\longrightarrow\mcG_{K,p}.
\end{equation}
Moreover, we have a the natural group homomorphism
\begin{equation}\label{varphiGI2}
\begin{tikzcd}
\varphi: &[-3em]\mcG_{K,p} \ar[r] & I_\chi/I_\chi^2; & \int_{\mcG_{K,p}}f d\varphi(\sigma) =\chi(\sigma)\inv\cdot f(\sigma)-f(1), & \sigma\in \mcG_{K,p}.
\end{tikzcd}
\end{equation}
Hence, if we write $\sigma_p$ for the nontrivial automorphism of $K_p/\Q_p$, then by means of the composition $\varphi\circ\tno{rec}_p$ we can realize the difference $(\sigma_p-1)P$ as an element of $I_\chi/I_\chi^2$, for any $P\in E(K_p)\simeq K_p^\times/q_E^\Z$.

Going back to the general setting of arbitrary $F$ and arbitrary quadratic extension $K/F$, Fornea and Gehrmann 
are able to construct in the aforementioned paper \cite{fornea2021plectic} an analogous anticyclotomic measure $\mu_{K,S}$ associated with $K$ and a set $S=\{\mfp_1,\cdots,\mfp_r\}$ of primes above $p$ of multiplicative reduction, under the assumption that $K$ is inert at any $\mfp_i\in S$. 
For any character $\chi$ with certain prescribed behavior at all $\mfp_i\in S$, they show that $\mu_{K,S}\in I_\chi^r$.
Moreover, they obtain in \cite[Theorem 5.13]{fornea2021plectic} a $p$-adic Gross-Zagier formula that relates the corresponding (twisted) plectic $P_\chi^S$ point with the class of $\mu_{K,S}$ in $I_\chi^r/I_\chi^{r+1}$.

In this paper, we have been able to generalize Fornea-Gehrmann's 
$\mu_{K,S}$ by constructing a $p$-adic distribution associated to any set of places $S$ above $p$ of non-supercuspidal reduction. In the ordinary setting, such distribution extends to a $p$-adic measure. If the primes in $S$ are of good or multiplicative reduction, $\mu_{K,S}$ has good interpolation properties as shown in Theorem \ref{THMintprop}. Moreover, we have been able to prove a $p$-adic Gross-Zagier formula that takes into account all the extended Mordell-Weil group, thus including all Tate's periods coming from exceptional zeroes. In that sense, our main result theorem \ref{mainTHM2} is both a generalization of \cite[theorem 5.13]{fornea2021plectic} and the exceptional zero formulas of \cite{Bergunde_2018}. In this introduction we can state a simplified version of theorem \ref{mainTHM2} where primes in $S$ have either good or split multiplicative reduction:

\vspace{+1em}
{\bf Theorem}
\emph{
Write $S=S_+\cup S_-$, and $S_+=S_+^1\cup S_+^2$, where $S_+$ is the set of primes of split multiplicative reduction, 
\[
S_+^1:=\{\mfp\in S_+,\;T\mbox{ splits in }\mfp\},\qquad S_+^2=\{\mfq\in S_+,\;T\mbox{ does not split in }\mfq\}.
\]
Then for any character $\chi$ that is trivial at all $\mfp_i\in S$,
we have that $\mu_{K,S}\in I_\chi^r$, where $r=\#S_+$, and
the image of $\mu_{K,S}$ in $I_\chi^r/I_\chi^{r+1}$ is given by
\[\mu_{K,S}\equiv c\cdot \epsilon_{S_-}(\pi_{S_-},\chi_{S_-})\cdot\varphi\circ\tno{rec}_{S_+}\ipa{q_{S_+^1}\otimes \ipa{\prod_{\mfp\in S_+^2}(\sigma_\mfp-1)}P_\chi^{S_+^2}}\mod I_\chi^{r+1},\]
where  $c\in \Q^\ti$ is an explicit constant, $\epsilon_{S_-}(\pi_{S_-},\chi_{S_-})$ is the epsilon factor at $S_-$ appearing in the interpolation formula, the morphisms $\varphi$ and $\tno{rec}_{S_+}$ are analogous to that of \eqref{defrecintro} and \eqref{varphiGI2}, $q_{S_+^1}=\bigotimes_{\mfp\in S_+^1}q_{E_\mfp}\in  T(F_{S_+^1})$ is the product of Tate periods, $\sigma_\mfp$ is the non-trivial automorphism of $\Gal(K_\mfp/F_\mfp)$ and $P_\chi^{S_+^2}$ is a (twisted) plectic point attached to the set $S_+^2$.
}

\subsection*{Acknowledgements.}
This project has received funding from the European Research Council
(ERC) under the European Union's Horizon 2020 research and innovation
programme (grant agreement No. 682152). Moreover, the second author has been partially funded by the project PID2021-124613OB-I00 from Ministerio de Ciencia e innovaci\'on. We would also like to thank Lennart Gehrmann and Marc Masdeu for all their helpful remarks and comments.

\section{Setup and notation}\label{section:setup}
Let $F$ be a number field
and let $r_F,s_F$ be the number of real and complex places, respectively, so that $[F:\Q]=r_F+2s_F$.
Let $K/F$ be a quadratic extension and let $\Sigma_{\tno{un}}(K/F)$ be the set of infinite places of $F$ splitting when extended to $K$.
Then
\[\siun=\sirr\sqcup\sicc\]
where $\sirr$ is the set of real places of $F$ which remain real in $K$ and $\sicc$ the set of complex places of $F$.
Let $u$,$r_{K/F,\R}$ and $s_F$ be the cardinality of these three sets, hence
$u=r_{K/F,\R}+s_F$. Analogously, we write $\Sigma_\R^\C(K/F)$ for the set of places $\infty\setminus\siun$ and $r_{K/F,\C}$ for its cardinality.

For any finite set of places $S$ of $F$ we write $F_S=\prod_{v\in S} F_v$. 
We denote by $\A_F$ the ring of adeles of $F$, and by $\A_F^S$ the ring of adeles outside $S$, namely, $\A_F^S=\A_F\cap\prod_{v\not\in S}F_v$. We will often write $\iy$ for the set of infinite places of $F$. For a finite place $\mfp$, we will write $v_\mfp:F_\mfp^\times\ra\Z$ for the $p$-adic valuation, $\mfo_{F_\mfp}$ for the integer ring, $\varpi_\mfp\in \mfo_{F_\mfp}$ for a fixed uniformizer, and $q_\mfp$ for the cardinal of the residue field $k_\mfp:=\mfo_{F_\mfp}/\mfp$.

Let $B/F$ be a quaternion algebra for which there exists an embedding $K\hookrightarrow B$, that we fix now. Let $\Sigma_B$ be the set of archimedean classes of $F$ splitting the quaternion algebra $B$.
We can define $G$ an algebraic group associated to $B^\ti/F^\ti$ as follows:
$G$ represents the functor that sends any $\mfo_F$-algebra $R$ to
\[G(R)=(\mcO_B\oti_{\mfo_F} R)^\ti/R^\ti,\]
where $\mcO_B$ is a maximal order in $B$ that we fix once and for all.
Similarly, we define the algebraic group $T$ associated to $K^\ti/F^\ti$ by
\[T(R)=(\mfo_{c_0}\oti_{\mfo_F} R)^\ti/R^\ti,\]
where $\mfo_{c_0}:=\mfo_B\cap K$ is an order of conductor $c_0$ in $\mfo_K$.
Note that $T\subset G$.
We denote by $G(F_\iy)_+$ and $T(F_\iy)_+$ the connected component of the identity of $G(F_\iy)$ and $T(F_\iy)$, respectively.
We also define $T(F)_+:=T(F)\cap T(F_\iy)_+$ and $G(F)_+:=G(F)\cap G(F_\iy)_+$.


Let $M$ be a $G(\A_F^S)$-representation over a field $L$, and let $\rho$ be an irreducible $G(\A_F^{S'})$-representation over $L$ with $S'\subseteq S$. We will write 
\[
M_\rho:=\Hom_{G(\A_F^S)}\left(\rho\mid_{G(\A_F^S)},M\right).
\]
as representations over $L$.

For any number field $F$ and any place $v$, we choose the Haar measure $dx_v^\times$ for $F_v^\times$:
\[
d^\times x_v=\zeta_v(1)|x_v|_v^{-1}dx_v;\qquad\mbox{where}\quad \left\{\begin{array}{ll}
 dx_v\mbox{ is $[F_v:\R]$ times the usual Lebesgue measure,}    &v\mid\infty;  \\
   dx_v\mbox{ is the Haar measure of $F_v$ such that }{\rm vol}(\mfo_{F,v})=|d_{F_v}|_v^{1/2},  &v\nmid\infty, 
\end{array}\right.
\]
$d_{F_v}$ is the different of $F_v$, $\zeta_v(s)=(1-q_v^{-s})^{-1}$, if $v\nmid\infty$, $\zeta_v(s)=\pi^{-s/2}\Gamma(s/2)$, if $F_v=\R$, and $\zeta_v(s)=2(2\pi)^{-s}\Gamma(s)$, if $F_v=\C$. The product of such measures provides a Tamagawa measure $d^\times x$ on $\A_F^\times/F^\times$. If we choose $d^\times t$ to be the quotient measure for $T(\A_F)/T(F)=\A_K^\times/\A_F^\times K^\times$, one has that ${\rm vol}(T(\A_F)/T(F))=2L(1,\psi_K)$, where $\psi_K$ is the quadratic character associated with $K/F$.

\section{Fundamental classes of tori}\label{section:fundcla}

In this section we will define certain fundamental classes associated with the torus $T$ attached to $K^\ti/F^\ti$.

\subsection{The fundamental class $\eta$ of the torus}\label{section.fundamentalclass}

Let $U=\prod_{\mfq}T(\mfo_{F,\mfq})$ and denote by $\mcO:=T(F)\cap U$ the group of relative units. Similarly, we 
define $\mcO_+:=\mcO\cap T(F_\iy)_+$ to be the group of totally positive relative units. By an straightforward argument using Dirichlet Units Theorem
\[\tno{rank}_\Z\mcO=\tno{rank}_\Z \mcO_+=(2r_{K/F,\R}+r_{K/F,\C}+2s_F-1)-(r_F+s_F-1)=r_{K/F,\R}+s_F=u.\] We also define the class group
\[
\Cl(T)_+:=T(\A_{F})/\ipa{T(F)\cdot U\cdot T(F_\iy)_+}\simeq T(\A_{F}^\iy)/\ipa{T(F)_+\cdot U}.
\]

Note that the subgroup
\[T(F_{\siun})_0=\R_+^u=(\R_+)^{\#\sirr}\ti(\R_+)^{\#\sicc}\subset T(F_{\siun})=(\R^\ti)^{\#\sirr}\ti(\C^\ti)^{\#\sicc}\]
 is isomorphic to $\R^u$ by means of the homomorphism $T(F_\iy)_+\ra \R^u$ given by $z\mapsto (\log|\sigma z|)_{\sigma\in\siun}$.
Moreover, under this isomorphism the image of $\mcO_+$ is a $\Z$-lattice $\Lambda\subset \R^u$, as in the proof of Dirichlet's Unit Theorem.

We can identify $\Lambda$ with its preimage in $T(F_{\siun})_0$.
Then $T(F_{\siun})_0/\Lambda$ is a $u$-dimensional real torus.
The \tbf{fundamental class} $\xi$ is a generator of $H_u(T(F_{\siun})_0/\Lambda,\Z)\simeq \Z$.
We can give a better description of $\xi$: Let $M:=T(F_{\siun})_0\simeq \R^u$.
The de Rham complex $\Omega_M^\bullet$ is a resolution for $\R$.
This implies that we have an edge morphism of the corresponding spectral sequence 
\[e: H^0(\Lambda,\Omega_M^u)\ra H^u(\Lambda,\R).\]
We identify any $c\in H_u(M/\Lambda,\Z)$ with $c\in H_u(\Lambda,\Z)$ by means of the relation
\[\int_c \omega=e(\omega)\cap c,\tno{ where }\omega\in H^0(\Lambda,\Omega_M^u)=\Omega_{M/\Lambda}^u.\]
We can think of $\xi\in H_u(\Lambda,\Z)$ as such that:
\begin{equation}\label{eq.wcto}
e(\omega)\cap \xi=\int_{T(F_\iy)/\Lambda}\omega,\tno{ for all }\omega\in H^0(\Lambda,\Omega_M^u).
\end{equation}

Note that
\begin{equation}\label{definition.mbT}
T(F_\iy)_+=\mbT\times T(F_{\siun})_0,\qquad \mbT:=(S^1)^{\#\sirc+\#\sicc}.
\end{equation}
Let $\mcO_{+,\mbT}=\mcO_+\cap \mbT$.
Since $\mcO_+$ is discrete and $\mbT$ is compact, $\mcO_{+,\mbT}$ is finite.
\begin{lemma}\label{lemma:quotientproduct}
We have that $\mcO_+\simeq \Lambda\ti \mcO_{+,\mbT}$. In particular,
\begin{equation}\label{equation:quotientproduct}
T(F_\iy)_+/\mcO_+\simeq T(F_{\siun})_0/\Lambda\ti \mbT/\mcO_{+,\mbT}
\end{equation}
\end{lemma}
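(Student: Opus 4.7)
The plan is to first establish the abstract splitting $\mcO_+ \simeq \mcO_{+,\mbT} \times \Lambda$ and then, using divisibility of $\mbT$, straighten out the embedding of $\mcO_+$ into $T(F_\iy)_+$ so that the quotient decomposes as a direct product of the two quotients.

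First I would consider the projection $p\colon T(F_\iy)_+ = \mbT \times T(F_{\siun})_0 \to T(F_{\siun})_0$ onto the second factor; under the log isomorphism $T(F_{\siun})_0 \simeq \R^u$, this agrees with $z\mapsto (\log|\sigma z|)_{\sigma\in\siun}$. Its restriction to $\mcO_+$ fits into the short exact sequence
\[
1 \longrightarrow \mcO_{+,\mbT} \longrightarrow \mcO_+ \stackrel{p}{\longrightarrow} \Lambda \longrightarrow 1,
\]
since its kernel is $\mcO_+\cap\mbT = \mcO_{+,\mbT}$ and its image is $\Lambda$ by the very definition of $\Lambda$. Because $\Lambda$ is free abelian of rank $u$, this sequence splits, and we obtain an isomorphism $\mcO_+ \simeq \mcO_{+,\mbT} \times \Lambda$. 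This is the first assertion.

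For the quotient, the subtle point is that a chosen section $s\colon \Lambda \to \mcO_+$ need not land inside $\{1\}\times T(F_{\siun})_0$: writing $s(\lambda) = (c(\lambda),\lambda)$, the map $c\colon\Lambda\to\mbT$ is a homomorphism that may be nontrivial, so the inclusion $\mcO_+\hookrightarrow \mbT\times T(F_{\siun})_0$ is twisted by $c$. To fix this, I would extend $c$ to a continuous homomorphism $\tilde c\colon T(F_{\siun})_0 \to \mbT$, which is possible because $\mbT = (S^1)^{\#\sirc+\#\sicc}$ is divisible and $\Lambda$ is a free subgroup of the real vector space $T(F_{\siun})_0$. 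The automorphism $\Phi\colon T(F_\iy)_+ \to T(F_\iy)_+$ given by $\Phi(t,x) = (t\,\tilde c(x)^{-1},\,x)$ then carries $\mcO_+$ bijectively onto $\mcO_{+,\mbT}\times\Lambda$, and passing to the quotient yields \eqref{equation:quotientproduct}.

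The main, if modest, obstacle is precisely this extension step, where one must verify that $c$ genuinely admits a continuous lift $\tilde c$. This reduces to a bookkeeping exercise: choose a $\Z$-basis $\lambda_1,\ldots,\lambda_u$ of $\Lambda$ (which is simultaneously an $\R$-basis of $T(F_{\siun})_0 \simeq \R^u$), lift each $c(\lambda_i)\in\mbT$ to the universal cover $\R^{\#\sirc+\#\sicc}$ of $\mbT$, and define $\tilde c$ by $\R$-linearity. Everything else in the argument — the splitting, the action of $\Phi$, and the identification of the quotient — is then automatic.
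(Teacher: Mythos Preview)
Your argument for the splitting $\mcO_+\simeq\mcO_{+,\mbT}\times\Lambda$ is exactly the paper's: restrict the projection $T(F_\iy)_+\to T(F_{\siun})_0$ to $\mcO_+$, identify kernel and image, and split using that $\Lambda$ is free. The paper then simply asserts that the quotient decomposition \eqref{equation:quotientproduct} follows; you go further and justify it by noting that the section $s\colon\Lambda\to\mcO_+$ may land with a nontrivial $\mbT$-component $c(\lambda)$, and you straighten this out via an automorphism of $T(F_\iy)_+$ built from a continuous extension $\tilde c\colon T(F_{\siun})_0\to\mbT$ of $c$. This extra step is correct and genuinely needed if one wants the isomorphism explicitly (an alternative abstract justification is that $\mbT/\mcO_{+,\mbT}$, being a torus, is divisible and hence injective, so the induced extension $1\to\mbT/\mcO_{+,\mbT}\to T(F_\iy)_+/\mcO_+\to T(F_{\siun})_0/\Lambda\to 1$ splits); your version has the advantage of giving a concrete topological-group isomorphism rather than relying on homological algebra.
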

\bpf
The image of the morphism $\pi|_{\mcO_+}: \mcO_+ \hookrightarrow T(F_\iy)_+ \stackrel{\pi}{\ra}T(F_{\siun})_0$ is $\Lambda$ by definition.
Since $\mbT=\ker\pi$, $\ker\pi|_{\mcO_+}=\mbT\cap \mcO_+=\mcO_{+,\mbT}$ and we deduce the following exact sequence
\begin{center}
\begin{tikzcd}
0 \ar[r] & \mcO_{+,\mbT} \ar[r] & \mcO_+ \ar[r,"\pi|_{\mcO_+}"] & \Lambda \ar[r] & 0.\\[-2em]
\end{tikzcd}
\end{center}
Since $\Lambda$ is a free $\Z$-module, such sequence splits and the result follows. 
\epf

Notice that the group $\Cl(T)_+$ fits in the following exact sequence
\begin{center}
\begin{tikzcd}
0 \ar[r] & T(F)_+/\mcO_{+} \ra T(\A_F^\iy)/U \ar[r,"p"]  & \Cl(T)_+ \ar[r] & 0. \\[-2em]
\end{tikzcd}
\end{center}
We fix preimages $\ovl t_i\in T(\A_F^\iy)$ for every element $t_i\in \Cl(T)_+$ and we consider the compact set
$\mcF=\bigcup_i \ovl{t_i}U\subset T(\A_F^\iy)$.
It is compact indeed because $\Cl(T)_+$ is finite and $U$ compact. 
\begin{lemma}\label{lemma1p2}
For any $t\in T(\A_F)$ there exists a unique $\tau_t\in T(F)/\mcO_+$ such that $\tau_t\inv t\in T(F_\iy)_+\ti \mcF$.
\end{lemma}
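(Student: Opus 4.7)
The plan is to prove both existence and uniqueness by directly unwinding the two descriptions of $\Cl(T)_+$.

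\textbf{Setup.} I would begin by writing $t = (t_\infty, t^\infty) \in T(F_\infty) \times T(\A_F^\infty)$ using the decomposition of adeles, and I would separate the two conditions that $\tau_t^{-1} t \in T(F_\infty)_+ \times \mcF$ encodes: an archimedean condition $\tau_t^{-1} t_\infty \in T(F_\infty)_+$ and a finite condition $\tau_t^{-1} t^\infty \in \mcF$.

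\textbf{Existence.} For this I would exploit the first description
\[
\Cl(T)_+ = T(\A_F)/\bigl(T(F)\cdot U \cdot T(F_\infty)_+\bigr).
\]
The image of $t$ in $\Cl(T)_+$ equals some $t_i$, so there exist $\tau_t \in T(F)$, $u \in U$, $z \in T(F_\infty)_+$ with $t = \tau_t \cdot \bar{t_i} \cdot u \cdot z$. Then $\tau_t^{-1} t = z \cdot (\bar{t_i} u)$, and since $\bar{t_i} u \in \mcF$ by the very definition of $\mcF$, this element lies in $T(F_\infty)_+ \times \mcF$.

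\textbf{Uniqueness.} Suppose $\tau, \tau' \in T(F)$ both satisfy the condition. Set $\gamma := \tau^{-1} \tau' \in T(F)$. From $\tau^{-1} t, (\tau')^{-1} t \in T(F_\infty)_+ \times \mcF$ we deduce $\gamma \cdot \bigl((\tau')^{-1} t\bigr) \in T(F_\infty)_+ \times \mcF$. Looking at the archimedean component, $\gamma \cdot (\tau')^{-1} t_\infty \in T(F_\infty)_+$, and since $(\tau')^{-1} t_\infty$ is already in $T(F_\infty)_+$, we conclude $\gamma \in T(F_\infty)_+ \cap T(F) = T(F)_+$. Looking at the finite component, $\gamma \cdot \bar{t_i} u \in \mcF$, so $\gamma \bar{t_i} u = \bar{t_j} u'$ for some index $j$ and $u' \in U$. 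Since $T$ is commutative this rewrites as $\gamma = \bar{t_j}\bar{t_i}^{-1} \cdot u' u^{-1}$, so $\bar{t_j}$ and $\gamma \bar{t_i}$ lie in the same $U$-coset; because $\gamma \in T(F)_+$, they also represent the same class in $\Cl(T)_+$, forcing $i = j$ by choice of the representatives $\bar{t_i}$. Therefore $\gamma \in T(F)_+ \cap U = \mcO_+$, and $\tau_t$ is well defined in $T(F)/\mcO_+$.

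\textbf{Expected obstacle.} The proof is mostly bookkeeping; the only subtle point is establishing $\gamma \in U$ in the finite part, where one must use both that the representatives $\bar{t_i}$ are distinct modulo $T(F)_+ \cdot U$ (the second description of $\Cl(T)_+$) and that $T$ is commutative so that $\bar{t_i} U \bar{t_i}^{-1} = U$. Once these are in hand, combining with the archimedean condition yields $\gamma \in \mcO_+$, which is exactly the required uniqueness.
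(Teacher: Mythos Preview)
Your proof is correct and follows essentially the same route as the paper. The only cosmetic difference is in the existence step: the paper first chooses $\gamma\in T(F)$ to push $t_\infty$ into $T(F_\infty)_+$ (using $T(F_\infty)/T(F_\infty)_+ \simeq T(F)/T(F)_+$) and then a second element $\tau\in T(F)_+$ to land the finite part in $\mcF$, whereas you compress these two moves into a single application of the quotient description $\Cl(T)_+ = T(\A_F)/\bigl(T(F)\cdot U\cdot T(F_\infty)_+\bigr)$; the uniqueness arguments are identical.
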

\bpf
Since $T(F_\iy)/T(F_\iy)_+=T(F)/T(F)_+$, given $t=(t_\iy,t^\iy)\in T(\A_F)$ there exists $\gamma\in T(F)$ such that $\gamma t_\iy\in T(F_\iy)_+$.
On the other hand, $p(\gamma t^\iy U)=t_i$ for some $i$. 
By the definition of $\Cl(T)_+$, there exists $\tau\in T(F)_+$ such that $\tau\gamma t^\iy U=\ovl{t_i}U$.
Hence
\[\tau\gamma t=(\tau\gamma t_\iy,\tau\gamma t^\iy)\in T(F_\iy)_+\ti \ovl{t_i}U\subset T(F_\iy)_+\ti \mcF.\]
By considering the image of $\tau\inv \gamma\inv$ in $T(F)/\mcO_+$, we deduce the existence of $\tau_t$.

For the unicity, suppose there exist $\tau,\tau'\in T(F)$ with
\[(\tau t_\iy,\tau t^\iy)=\tau t\in T(F_\iy)_+\ti \mcF\ni \tau't=(\tau't_\iy,\tau't^\iy)\]
Then, on the one hand, $\tau\inv\tau'=(\tau t_\iy)\inv(\tau't_\iy)\in T(F_\iy)_+$.
On the other hand,
$\tau t^\iy =\ovl t_i u$ for some $\ovl t_i$ and $u\in U$,
and $\tau' t^\iy =\ovl t_j u'$ for some $\ovl t_j$ and $u'\in U$. 
So $\tau\inv \tau'=(\ovl t_i)\inv \ovl t_j u\inv u'$.
This implies $t_i=p(\ovl t_i U)=p(\ovl t_jU)=t_j$.
By the construction of $\mcF$, we must have $\ovl t_i=\ovl t_j$.
Thus, $\tau\inv\tau'\in U\cap T(F)\cap T(F_\iy)_+=\mcO_+$.
\epf
The set of continuous functions $C(\mcF,\Z)$ has a natural action of $\mcO_+$ given by translation.
The characteristic function $\indi_\mcF$ is $\mcO_+$-invariant.
Consider the cap product
\begin{equation}\label{def:fundamentalclass}
\eta=\indi_\mcF\cap \xi\in H_u(\mcO_+,C(\mcF,\Z)),
\end{equation}
where $\indi_\mcF\in H^0(\mcO_+,C(\mcF,\Z))$ and $\xi\in H_u(\mcO_+,\Z)$ is the image of $\xi$ through the corestriction morphism.

For any ring $R$ and any $R$-module $M$, write $C^0_c(T(\A_F),M)$ is the set of locally constant $M$-valued functions of $T(\A_F)$ that are compactly supported when restricted to $T(\A_F^\iy)$. Notice that if $M$ is endowed with a natural $T(F)$-action then we have a natural action of $T(F)$ on $C^0_c(T(\A_F),M)$.
\begin{lemma}\label{lemma.thereisaniso}
There is an isomorphism of $T(F)$-modules
\[\Ind_{\mcO_+}^{T(F)}(C(\mcF,\Z))\simeq C^0_c(T(\A_F),\Z).\]
\end{lemma}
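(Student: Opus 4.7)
The plan is to construct the isomorphism via the decomposition of $T(\A_F)$ furnished by Lemma \ref{lemma1p2}. That lemma yields a bijection
\[T(F)\times_{\mcO_+}\bigl(T(F_\iy)_+\times\mcF\bigr)\xrightarrow{\sim} T(\A_F),\qquad (\tau,(s,u))\longmapsto \tau\cdot(s,u),\]
where $\mcO_+$ acts diagonally (on $T(F)$ by right multiplication, and on $T(F_\iy)_+\times\mcF$ by left multiplication via its inclusions in $T(F_\iy)_+$ and in $U$ respectively), and the residual left $T(F)$-action is by multiplication on the first factor. Note that $\mcF$ is $\mcO_+$-stable because $\mcO_+\subset U$, $U$ is a group, and $\mcF$ is a finite union of cosets $\overline{t_i}U$.

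I would define the map $\Psi:\Ind_{\mcO_+}^{T(F)}C(\mcF,\Z)\to C^0_c(T(\A_F),\Z)$ by viewing an element of the induction as an $\mcO_+$-equivariant function $\phi:T(F)\to C(\mcF,\Z)$ of finite support modulo $\mcO_+$, and setting $\Psi(\phi)(t):=\phi(\tau_t)(u_t)$, where $\tau_t\in T(F)/\mcO_+$ and $u_t\in\mcF$ are the unique data of Lemma \ref{lemma1p2} satisfying $\tau_t^{-1}t=(s_t,u_t)\in T(F_\iy)_+\times \mcF$. Independence of the choice of representative of the coset $\tau_t\mcO_+$ follows immediately from the $\mcO_+$-equivariance of $\phi$: replacing $\tau_t$ by $\tau_t\gamma$ replaces $u_t$ by $\gamma\inv u_t$, which exactly cancels the twist by $\gamma$. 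Local constancy follows from continuity of $\phi(\tau_t)$ on the open-compact set $\mcF\subset T(\A_F^\iy)$ combined with the fact that $\Psi(\phi)$ is constant along each $T(F_\iy)_+$-fiber; compact support at the finite places holds because only finitely many $\tau\in T(F)/\mcO_+$ contribute, and each contribution is supported in the compact set $\tau\cdot \mcF$. The $T(F)$-equivariance is immediate from the decomposition.

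The inverse $\Phi$ is given by restriction: for $f\in C^0_c(T(\A_F),\Z)$, set $\Phi(f)(\tau)(u):=f(\tau\cdot(1,u))$ for $\tau\in T(F)$, $u\in\mcF$, where $1$ denotes the identity of $T(F_\iy)_+$. The $\mcO_+$-equivariance of $\Phi(f)$ comes from the $T(F)$-equivariance of $f$ applied to $\gamma\in\mcO_+\subset T(F)$, and the finite-support-modulo-$\mcO_+$ property follows from the compact support of $f$ in $T(\A_F^\iy)$, since any compact subset meets only finitely many translates $\tau\cdot\mcF$. The identities $\Psi\circ\Phi=\id$ and $\Phi\circ\Psi=\id$ are then a direct consequence of the uniqueness part of Lemma \ref{lemma1p2}.

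The main technical obstacle is the careful bookkeeping of the diagonal $\mcO_+$-action across the two factors $T(F_\iy)_+$ and $\mcF$ (in particular getting the sign conventions on $\gamma$ versus $\gamma\inv$ correct in the equivariance condition), together with translating the local-constancy hypothesis on functions of $T(\A_F)$ into continuity on $\mcF$ plus triviality along $T(F_\iy)_+$. Once this compatibility is verified, everything else is formal.
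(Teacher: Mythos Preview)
Your proof is correct and follows essentially the same approach as the paper: both use Lemma~\ref{lemma1p2} to decompose $T(\A_F)$ and define the isomorphism via the explicit formula $\Psi(\phi)(t)=\phi(\tau_t)(u_t)$ (the paper writes the same map as $\varphi(\Phi)(x)=\iota(\Phi(\tau_x))(\tau_x^{-1}x)$ after first presenting it as the sum $\sum_{t\in T(F)/\mcO_+} t\cdot\iota(\Phi(t))$). One small remark: the $\mcO_+$-equivariance of your inverse $\Phi(f)$ does not come from any ``$T(F)$-equivariance of $f$'' (there is none), but rather from the local constancy of $f$---hence constancy along the connected group $T(F_\iy)_+$---a point you correctly invoke elsewhere in the argument.
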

\bpf
Since $\mcF\subset T(\A_F)$ is compact there is an $\mcO_+$-equivariant embedding
\begin{center}
\begin{tikzcd}
\iota: C(\mcF,\Z) \ar[r,hook] & C^0_c(T(\A_F),\Z) \\[-2em]
\phi \ar[r,mapsto] & (\iota\phi)(t_\iy,t^\iy) = \indi_{T(F_\iy)_+}(t_\iy)\cdot \phi(t^\iy)\cdot\indi_\mcF(t^\iy). \\[-2em]
\end{tikzcd}
\end{center}
By definition, an element $\Phi\in \Ind_{\mcO_+}^{T(F)}(C(\mcF,\Z))$ is a function $\Phi: T(F) \ra C(\mcF,\Z)$ finitely supported in $T(F)/\mcO_+$, 
and satisfying the compatibility condition $\Phi(t\lambda)=\lambda\inv\cdot\Phi(t)$ for all $\lambda\in \mcO_+$. 
We define the morphism 
\begin{center}
\begin{tikzcd}
\varphi: \Ind_{\mcO_+}^{T(F)}(C(\mcF,\Z)) \ar[r] & C^0_c(T(\A_F),\Z) \\[-2em]
\Phi \ar[r,mapsto] & \varphi(\Phi)=\sum_{t\in T(F)/\mcO_+} t\cdot\iota(\Phi(t)). \\[-2em]
\end{tikzcd}
\end{center}
The sum is finite because $\Phi$ is finitely supported,
and from the $\mcO_+$-equivariance of $\iota$ and the $\mcO_+$-compatibility of $\Phi$
it follows that $\varphi$ is well-defined
and $T(F)$-equivariant.

By lemma \ref{lemma1p2}, for all $x\in T(\A_F)$ there exists a unique $\tau_x\in T(F)/\mcO_+$ such that $\tau_x\inv x\in T(F_\iy)_+\ti \mcF$.
Hence
$\varphi(\Phi)(x)=\iota(\Phi(\tau_x))(\tau_x\inv x)$.
This last relation together with Lemma \ref{lemma1p2} and the injectivity of $\iota$ shows that $\varphi$ is bijective and the result follows.
\epf
Thus, by Shapiro's lemma one may regard
$\eta\in H_u(T(F),C^0_c(T(\A_F),\Z))$.

\subsubsection{The $S$-fundamental classes $\eta^S$}\label{section.fundclS}

In \S\ref{section.fundamentalclass} we have defined a fundamental class $\eta$. As shown in \cite{preprintsanti2}, by means of $\eta$ we can compute certain periods related with certain critical values of classical L-functions.
Nevertheless in \cite{guitart2017automorphic} different fundamental classes $\eta^\mfp$ are defined in order to construct Darmon points.
We devote this section to slightly generalize the definition of $\eta^\mfp$ in \cite{guitart2017automorphic} and we will study the relation between the fundamental classes in the next section.

Let $S$ be a set of places $\mfp$ of $F$ above $p$. Write $S:=S^1\cup S^2$, being $S^1$ the set of places $\mfp$ in $S$ where $T$ splits, namely $T(F_\mfp)= F_\mfp^\ti$, and $S^2$ the set of places where $T$ does not split. 
We consider:
\begin{equation}\label{notation.letusrecall}
\begin{tabular}{cc}
$\mcF^S=\bigsqcup_i \ovl s_i U^S, \quad$ & $\quad U^S =\prod_{\mfq\not\in S} T(\mfo_{F_\mfq}),$ 
\end{tabular}
\end{equation}
where $\ovl s_i\in T(\A_F^{S\cup\iy})$ are representatives of all the elements in $\Cl(T)_+^S=T(\A_F^{S\cup\iy})/\ipa{U^S T(F)_+}$.
Let us consider also the set of totally positive relative $S$-units $\mcO_+^S:=U^S\cap T(F)_+$.
Note that we have an exact sequence
\begin{equation} \label{diagram.exact}
\begin{tikzcd}
0 \ar[r] & T(F_S)/\mcO^S_+T(\mfo_{F_S})  \ar[r] & \Cl(T)_+ \ar[r] & \Cl(T)_+^S \ar[r] & 0 \\[-2em]
\end{tikzcd}
\end{equation}
where $\mfo_{F_S}:=\prod_{\mfp\in S} \mfo_{F_\mfp}$.

It is clear that $\Z$-rank of the quotient $T(F_S)/T(\mfo_{F_S})$ is $r=\#S^1$.
Moreover, we have the natural exact sequence
\begin{equation}\label{diagram.notexact}
\begin{tikzcd}
0 \ar[r] & \mcO_+ \ar[r] & \mcO_+^S \ar[r] & T(F_S)/T(\mfo_{F_S}). \\[-2em]
\end{tikzcd}
\end{equation}
Note that \eqref{diagram.notexact} and \eqref{diagram.exact} induce an exact sequence
\begin{equation}\label{diagram.longerexact}
\begin{tikzcd}
0 \ar[r] & \mcO_+ \ar[r] & \mcO_+^S \ar[r] & T(F_S)/T(\mfo_{F_S}) \ar[r] & \Cl(T)_+ \ar[r] & \Cl(T)_+^S \ar[r] & 0 \\[-2em]
\end{tikzcd}
\end{equation}
This implies that the $\Z$-rank of $\mcO_+^S$ is $r+u$, since $\Cl(T)_+$ is finite.
Write $H^S:=\mcO_+^S/\mcO_+$. The free part $\Lambda_H^S$ of $H^S$ provides a fundamental class $c^S\in H_r(H^S,\Z)$. Indeed, the map
$(v_\mfp)_{\mfp\in S^1}:H^S\rightarrow\R^r$
given by the $p$-adic valuations identifies $\Lambda_H^S$ as a lattice in $\R^r$, hence we can proceed as in the previous section to define $c^S$. We can consider the image $\xi^S\in H_{u+r}(\mcO_+^S,\Z)$ of $c^S$ through the composition
\[
c^S\in H_r(H^S,\Z)\stackrel{1\mapsto\xi}{\longrightarrow}H_r(H^S,H_{u}(\mcO_+,\Z))\longrightarrow H_{u+r}(\mcO_+^S,\Z),
\]
where $\xi\in H_u(\mcO_+,\Z)$ is the fundamental class defined previously, and the last arrow is the edge morphism of the Lyndon–Hochschild–Serre spectral sequence $H_p(H^S,H_q(\mcO_+,\Z))\Rightarrow H_{p+q}(\mcO_+^S,\Z)$.
We have, similarly as in Lemma \ref{lemma.thereisaniso},
\[
C^0_c(T(\A_F^{S\cup\iy}),\Z)=\Ind_{\mcO_+^S}^{T(F)_+} C(\mcF^S,\Z),
\]
where $C^0_c(T(\A_F^{S\cup \iy}),M)$ is the set of $M$-valued locally constant and compactly supported functions of $T(\A_F^{S\cup\iy})$.
Thus, the cap product $\indi_{\mcF^S}\cap \xi^S$ provides an element
\begin{equation}\label{definition.fundamentalclassmfp}
\eta^S:=\indi_{\mcF^S}\cap \xi^S \in H_{u+r}(\mcO_+^S,C(\mcF^S,\Z))=H_{u+r}(T(F)_+,C^0_c(T(\A_F^{S\cup\iy}),\Z)),
\end{equation}
where the last equality follows from Shapiro's Lemma.

\subsubsection{Relation between fundamental classes}

Since $T(F_\iy)/T(F_\iy)_+\simeq T(F)/T(F)_+$, there is a $T(F)$-equivariant isomorphism
\begin{equation}\label{eq.isomorphisminduced}
C_c^0(T(\A_F),M)\ra\Ind_{T(F)_+}^{T(F)}C^0_c(T(\A_F^{\iy}),M):=C^0_c(T(\A_F^{\iy}),M)\oti_{R[T(F)_+]}R[T(F)]
\end{equation}
given by $f\mapsto\sum_{\ovl t\in T(F)/T(F)_+}\ipa{(t\inv \cdot f)|_{T(\A^{\iy})}\oti t}$. Moreover, for any ring $R$
\begin{align}\label{isomorphism.tensorprod}
C^0_c(T(\A_F^{\iy}),R)\simeq C^0_c(T(\A_F^{S\cup\iy}),R)\otimes_R C_c^0(T(F_S),R)\simeq C^0_c(T(\A_F^{S\cup\iy}),C^0_c(T(F_S),R)),
\end{align}
where $C^0_c(T(F_S),R)$ is the set of $R$-valued locally constant and compactly supported functions on $T(F_S)$, seen as $T(F)$-module by means of the diagonal embedding $T(F)\hookrightarrow T(F_S)$. 
Putting these two identities together we obtain
\[C_c^0(T(\A_F),\Z)
=\Ind_{T(F)_+}^{T(F)}\ipa{C_c^0(T(\A_F^{S\cup\iy}),\Z)\oti_\Z C_c^0(T(F_S),\Z)}.\]
The following result relates the previously defined fundamental classes (see also \cite[Lemma 1.4]{Bergunde_2018}): 
\begin{lemma}\label{lemma.onedotseven}
We have that
\[\#(H^S_{\tno{tor}})\cdot \eta = \eta^S \cap \bigcup_{\mfp\in S}\tno{res}^{T(F)_+}_{T(F_\mfp)}z_{\mfp}\]
in $H_u\ipa{T(F)_+,C_c^0(T(\A_F^{S\cup\iy}),\Z)\oti_\Z C_c^0(T(F_S),\Z)}=H_u(T(F),C_c^0(T(\A_F),\Z))$, where $H^S_{\tno{tor}}\subseteq H^S$ is the torsion subgroup, \[z_\mfp=\indi_{T(F_\mfp)}\in H^0(T(F_\mfp),C_c^0(T(F_\mfp),\Z)),\qquad \mbox{if }\mfp\in S^2,\] 
and, if $\mfp\in S^1$ then $z_\mfp\in H^1(T(F_\mfp),C_c^0(T(F_\mfp),\Z))$ is the class associated with the exact sequence
\[0\longrightarrow C_c^0(T(F_\mfp),\Z))\longrightarrow C_c^0(F_\mfp,\Z))\stackrel{f\mapsto f(0)}{\longrightarrow} \Z\longrightarrow 0.\]
\end{lemma}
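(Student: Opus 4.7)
The plan is to compare both sides by decomposing the global cup product $\bigcup_{\mfp \in S} z_\mfp$ into its $S^1$ and $S^2$ contributions and to use the Lyndon–Hochschild–Serre spectral sequence of the short exact sequence
\[
0 \longrightarrow \mcO_+ \longrightarrow \mcO_+^S \longrightarrow H^S \longrightarrow 0,
\]
which is exactly the device used to build $\xi^S$ out of $\xi$ and $c^S$. The key observation is that after applying Shapiro's lemma on both sides of \eqref{isomorphism.tensorprod}, capping with $\bigcup z_\mfp$ becomes a purely group-theoretic operation on the cohomology of $\mcO_+^S$, and it is exactly the operation that undoes the passage from $\xi$ to $\xi^S$, up to the torsion of $H^S$.

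First I would treat each $\mfp \in S$ separately. For $\mfp \in S^2$, the torus $T(F_\mfp)$ is compact, the class $z_\mfp = \indi_{T(F_\mfp)} \in H^0$ is the characteristic function, and capping with it simply restricts the support of a function in $C_c^0(T(F_S),\Z)$ to $T(F_\mfp)$ in the $\mfp$-slot; this will match the fact that $\mcF$ differs from $\mcF^S$ only by multiplication by $T(\mfo_{F,\mfp})$ at those places. For $\mfp \in S^1$, the short exact sequence
\[
0 \longrightarrow C_c^0(T(F_\mfp),\Z) \longrightarrow C_c^0(F_\mfp,\Z) \stackrel{f\mapsto f(0)}{\longrightarrow} \Z \longrightarrow 0
\]
is $T(F_\mfp)$-equivariantly (but not $F_\mfp$-equivariantly!) split; under the identification $H^1(T(F_\mfp),\Z) = \Hom_{\tno{cts}}(T(F_\mfp),\Z)$, the connecting homomorphism $z_\mfp$ is cup product with the valuation $v_\mfp \in H^1(T(F_\mfp),\Z)$. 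After restriction along $T(F)_+ \hookrightarrow T(F_\mfp)$, this becomes cup product with $v_\mfp: T(F)_+ \to \Z$.

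Next, I would transport everything through Shapiro's lemma to the cohomology of $\mcO_+^S$: the class $\eta^S$ corresponds to $\indi_{\mcF^S} \cap \xi^S$, and $\bigcup_{\mfp \in S} \tno{res}_{T(F)_+}^{T(F_\mfp)} z_\mfp$ becomes the cup of the valuation classes $\bigcup_{\mfp \in S^1} v_\mfp \in H^r(\mcO_+^S, \Z)$ together with the degree-zero restrictions at $S^2$. Because the valuations $v_\mfp$ vanish on $\mcO_+$, this cup class factors through the quotient $\mcO_+^S/\mcO_+ = H^S$, i.e.\ comes from a class in $H^r(H^S,\Z)$. Using the LHS spectral sequence, capping $\xi^S$ with this pullback equals the scalar $\langle c^S, \bigcup_{\mfp \in S^1} v_\mfp\rangle \cdot \xi$. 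By construction of $c^S$ as the fundamental class of $\R^r / \Lambda_H^S$, and of $\Lambda_H^S$ as the image of $H^S$ under $(v_\mfp)_{\mfp \in S^1}$, this pairing equals the index $[\Z^r : \Lambda_H^S] = \#H^S_{\tno{tor}}$, giving the desired scalar.

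Putting the pieces together, the effect of capping $\eta^S = \indi_{\mcF^S}\cap \xi^S$ with $\bigcup z_\mfp$ is to multiply by $\#H^S_{\tno{tor}}$, to replace $\indi_{\mcF^S}$ by the adèlic characteristic function of $\mcF^S \cdot T(\mfo_{F,S}) = \mcF$, and to collapse the $(u+r)$-homology class $\xi^S$ back down to the $u$-class $\xi$; this is precisely $\#H^S_{\tno{tor}}\cdot \eta$. The main obstacle will be bookkeeping: making the two Shapiro identifications (along $\mcO_+ \subset T(F)$ and along $\mcO_+^S \subset T(F)_+$), the edge map of the LHS spectral sequence, and the sign/normalization of $c^S$ all compatible, so that the local restrictions at $S^2$ places actually enlarge $\mcF^S$ by exactly $T(\mfo_{F,\mfp})$, and the connecting homomorphisms at $S^1$ places realize the right valuation classes without spurious sign.
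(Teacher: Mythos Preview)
Your overall plan—separating $S^1$ from $S^2$, reinterpreting $z_\mfp$ for $\mfp\in S^1$ via the valuation, and invoking the Lyndon--Hochschild--Serre spectral sequence for $\mcO_+\lhd\mcO_+^S$ to undo the passage $\xi\mapsto\xi^S$—is a reasonable alternative to the paper's argument, but two of the concrete identities you invoke are false, and they do not cancel.

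First, you claim $\mcF^S\cdot T(\mfo_{F,S})=\mcF$. It is not: the exact sequence \eqref{diagram.longerexact} gives a short exact sequence $0\to H^S\to T(F_S)/T(\mfo_{F,S})\to W\to 0$ with $W=\ker\bigl(\Cl(T)_+\to\Cl(T)_+^S\bigr)$, and one has
\[
\mcF=\bigsqcup_{w\in W}\mcF^S\times w\,T(\mfo_{F,S}),
\]
so $\indi_\mcF$ is a sum of $|W|$ translates of $\indi_{\mcF^S}\otimes\indi_{T(\mfo_{F,S})}$, not a single one. Second, you assert $[\Z^r:\Lambda_H^S]=\#H^S_{\tno{tor}}$. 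Writing $F$ for the torsion of $T(F_S)/T(\mfo_{F,S})$ (coming from the $S^2$ places), the same exact sequence yields $|W|\cdot|H^S_{\tno{tor}}|=|F|\cdot[\Z^r:\Lambda_H^S]$, so $[\Z^r:\Lambda_H^S]$ is governed by $|W|$, not by $|H^S_{\tno{tor}}|$. Already for $S=S^1=\{\mfp\}$ one has $H^S\hookrightarrow\Z$, hence $H^S_{\tno{tor}}=0$, while $[\Z:\Lambda_H^S]=|W|$ can be arbitrary.

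The paper bypasses both issues by working with explicit cochain representatives rather than the spectral sequence. It first pins down $\bigl(\bigotimes_{\mfp\in S}z_\mfp\bigr)(c^S)$ by the characterization $\sum_{\gamma\in\Lambda_H^S}\gamma\cdot\bigl(\bigotimes_\mfp z_\mfp\bigr)(c^S)=\indi_{T(F_S)}$, then computes directly
\[
\sum_{\alpha\in H^S}\alpha\cdot\indi_\mcF=\indi_{\mcF^S}\otimes\indi_{T(F_S)}
\]
using the decomposition of $\mcF$ over $W$; comparing the two gives $\sum_{\gamma\in H^S_{\tno{tor}}}\gamma\cdot\indi_\mcF=\indi_{\mcF^S}\otimes\bigl(\bigotimes_\mfp z_\mfp\bigr)(c^S)$, and capping with $\xi$ produces the factor $\#H^S_{\tno{tor}}$ directly. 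Your spectral-sequence route could probably be made to work, but you would have to carry the $W$-translates and the finite part of $T(F_S)/T(\mfo_{F,S})$ through the argument explicitly; as written, the two wrong identities are genuine gaps, not just bookkeeping.
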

\bpf
On the one hand that, if $\mfp\in S^1$, the class $z_{\mfp}$ has representative $z_\mfp(t)=\indi_{\mfo_{F_\mfp}}-t\indi_{\mfo_{F_\mfp}}$. Hence $z_\mfp(t)$ is characterized to be the function such that $\sum_{n\in\Z}t^nz_\mfp(t)=\indi_{T(F_\mfp)}$, for all $t\in T(F_\mfp)$ with $v_\mfp(t)> 0$. Thus, $\ipa{\bigotimes_{\mfp\in S} z_{\mfp}}(c^S)$ is characterized so that 
\[\sum_{\gamma\in \Lambda_H^S}\gamma\ipa{\bigotimes_{\mfp\in S}z_{\mfp}}(c^S)=\indi_{T(F_S)}.\]

On the other hand, the exact sequence \eqref{diagram.longerexact} implies
\begin{center}
\begin{tikzcd}
0 \ar[r] & H^S \ar[r] & T(F_S)/T(\mfo_{F_S}) \ar[r] & W \ar[r] & 0 \\[-2em]
\end{tikzcd}
\end{center}
where $W:=\ker\ipa{\Cl(T)_+ \ra \Cl(T)_+^S}$,
and note that
\[\mcF=\bigsqcup_{\ovl s_i\in \Cl(T)_+}s_i U=\bigsqcup_{\ovl t_j\in W}\bigsqcup_{\ovl s_k\in \Cl(T)_+^S}s_k U^S\ti t_j T(\mfo_{F_S})=\bigsqcup_{\ovl t_j\in W}\mcF^S\ti t_j T(\mfo_{F_S}).\]

Hence 
\[\sum_{\beta\in \Lambda_H^S}\beta\sum_{\ovl \gamma\in H^S_{\tno{tor}}}\gamma\cdot\indi_{\mcF}=\sum_{\ovl \alpha\in H^S}\alpha\cdot\ipa{\sum_{\ovl t_j\in T(F_\mfp)/T(\mfo_{F_S})\mcO_+^S}\indi_{\mcF^S}\oti t_j\indi_{T(\mfo_{F_S})}}=\indi_{\mcF^S}\oti\indi_{T(F_S)},\]
and so $\sum_{\ovl \gamma\in H^S_{\tno{tor}}}\gamma\cdot\indi_{\mcF}=\indi_{\mcF^S}\oti\ipa{\bigotimes_{\mfp\in S} z_{\mfp}}(c^S)$. This implies that 
\[
\eta^S\cap\bigcap_{\mfp\in S}\tno{res}_{T(F)_+}^{T(F_\mfp)}z_{\mfp}
= \ipa{\indi_{\mcF^S}\cap\bigcap_{\mfp\in S}\tno{res}_{T(F)_+}^{T(F_\mfp)}z_{\mfp}}\cap\xi^S = \ipa{\sum_{\ovl \gamma\in H^S_{\tno{tor}}}\gamma\indi_{\mcF}}\cap \xi = \#(H^S_{\tno{tor}})\cdot(\indi_\mcF\cap\xi)=\#(H^S_{\tno{tor}})\cdot\eta,
\]
and the result follows.
\epf




\section{Cohomology classes attached to modular elliptic curves}

In this section we will describe cohomology classes associated with modular elliptic curves by means of the Eichler-Shimura map.


Let $H\subset G(F)$ be a subgroup,
$R$ ring,
and $S$ a finite set of places of $F$ above $p$.
We will usually let $H$ to be $G(F), G(F)_+, T(F)$ or $T(F)_+$.
For any $R[H]$-modules $M,N$ let
\begin{equation}\label{definition.automorphicforms}
\mcA^{S\cup\iy}(M,N):=\icla{\phi: G(\A^{S\cup\iy}_F)\ra \Hom_{R}(M,N),\tno{ \begin{tabular}{c} there exists an open compact\\ subgroup $U\subset G(\A_F^{S\cup\iy})$\\ with $\phi(\cdot U)=\phi(\cdot)$ \end{tabular}}},
\end{equation}
and let also $\mcA^{S\cup\iy}(N):=\mcA^{S\cup\iy}(R,N)$.
Then $\mcA^{S\cup\iy}(M,N)$ has a natural action of $H$ and $G(\A_F^{S\cup\iy})$, namely 
\[(h\phi)(x):=(h\cdot \phi(h\inv x)),\qquad (y\phi)(x):=\phi(xy),\]
where  $h \in H$ and $x,y \in G(\A_F^{S\cup\iy})$. For an open compact subgroup $U\subset G(\A_F^{S\cup\iy})$ we denote the subspace of $U$-invariant functions by $\mcA^{S\cup\iy}(\Hom_R(M,N))^U$. We define the cohomology space
\[
H_\ast^r(H,\mcA^{S\cup\iy}(N)):=\varinjlim_U H^r(H,\mcA^{S\cup\iy}(N)^U), \qquad U\subset G(\A_F^{S\cup\iy})\;\mbox{ open compact subgroup.}
\]
\begin{remark}\label{remonadmrepcoho}
    Note that $H_\ast^r(H,\mcA^{S\cup\iy}(N))$ is an admissible $G(\A_F^{S\cup\infty})$-representation. Moreover, the group ${\rm Pic}(U):=G(F)_+\backslash G(\A^{S\cup\infty})\slash U$ is finite for any $U$ as above. Hence, given representatives $\{g_1,\cdots,g_r\}$ of ${\rm Pic}(U)$,
    \[
    \mcA^{S\cup\iy}(N)^U=
    \bigoplus_{i=1}^r{\rm coInd}_{\Gamma_{g_i}}^{G(F)_+}(N);\qquad \Gamma_{g_i}=G(F)_+\cap g_iUg_i^{-1}.
    \]
    This implies that 
    $H^r(G(F)_+,\mcA^{S\cup\iy}(N)^U)=\bigoplus_{i=1}^r H^r(\Gamma_{g_i},N)$. Thus, our cohomology spaces fit with the classical theory of cohomology on $S$-arithmetic groups. In particular the natural morphism
    \[
    H_\ast^r(G(F)_+,\mcA^{S\cup\iy}(N))\otimes_RM\longrightarrow H_\ast^r(G(F)_+,\mcA^{S\cup\iy}(N\oti_RM))
    \]
    is an isomorphism for any flat $R$-module $M$ (see \cite[remark 4.1]{HndzMol2}). 
\end{remark}

\subsection{Automorphic cohomology classes}\label{section:autcla}

Let $E/F$ be a modular elliptic curve. Hence, attached to $E$, we have an automorphic form for $\PGL_2/F$ of parallel weight 2. Let us assume that such form admits a Jacquet-Langlands lift to $G$, and denote by $\pi$ the corresponding automorphic representation. Let $s:=\#\Sigma_B$. The Eichler-Shimura morphism (explained in \cite{ESsanti} for example), allows us to realize $\pi\mid_{G(\A_F^\infty)}$ in the cohomology space $H^s_\ast(G(F)_+,\mcA^\iy(\C))^\lambda$, where $\lambda:G(F)/G(F)_+\rightarrow\pm 1$ is a fixed character and the super-index $\lambda$ stands for the subspace where the natural action of $G(F)/G(F)_+$ on the cohomology groups is given by $\lambda$. Moreover, since the coefficient ring of the automorphic representation is $\Z$, by remark \ref{remonadmrepcoho} such realization is generated by a class
\[
\phi_\lambda\in H_\ast^s(G(F)_+,\mcA^\iy(\Z))^\lambda.
\]
The $G(\A_F^\iy)$-representation $\rho$ over $\Q$ generated by $\phi_\lambda$ satisfies $\rho\otimes_{\Q}\C\simeq\pi^\iy:=\pi\mid_{G(\A_F^\iy)}$. 
For any set $S$ of places above $p$ that split $B$,
write $V_S:=\rho\mid_{G(F_S)}$, with $V_S=\bigotimes_{\mfp\in S}V_\mfp$. As explained in \cite[remark 4.2]{HndzMol2}, we can realize the representation $\rho\mid_{G(\A_F^{\infty\cup S})}$ in the cohomology space $H^s_\ast(G(F)_+,\mcA^{S\cup\iy}(V_S,\Q))^\lambda$. Indeed, for any $v_S\in V_S$ we have a $G(F)$-equivariant morphism 
\begin{equation}\label{eqevaluationmap1}
    \cdot (v_S):\mcA^{S\cup\iy}\ipa{V_S,\Q}\longrightarrow \mcA^{\iy}\ipa{\Q};\qquad \phi(v_S)(g_S,g^S)=\phi(g^S)(g_Sv_S).
\end{equation}
Hence, such realization is generated by an element $\phi_\lambda^S\in H_\ast^s(G(F)_+,\mcA^{S\cup\iy}(V_S,\Q))^\lambda$ such that $\phi_\lambda^S(x_S)=\phi_\lambda$, where $x_S$ is a generator of $V_S$.
For any ring $R$, we denote by $V_S^R=\bigotimes V_\mfp^R$ the $R[G(F_S)]$-module generated by $\phi_\lambda$. 
By remark \ref{remarkonconstants}, we will usually regard $\phi_\lambda^S$ as an element $\phi_\lambda^S\in H_\ast^s(G(F)_+,\mcA^{S\cup\iy}(V_S^L,L))^\lambda$, for $L=\C_p$, $\bar\Q_p$ or $\C$. In case that all primes in $S$ are of multiplicative reduction, by \cite[remark 4.14]{HndzMol2}, we can even consider it as
\[
\phi_\lambda^S\in H_\ast^s(G(F)_+,\mcA^{S\cup\iy}(V_S^{\Z_p},\Z_p))^\lambda.
\]
The element $\phi_\lambda^S$ is the generalization in our setting of the classical harmonic cocycle, and it is essential in our construction of anti-cyclotomic $p$-adic L-functions and plectic points.

\subsection{Pairings}

In order to relate the $S$-arithmetic cohomology groups defined in \S \ref{section:autcla} and the homology groups defined in \S \ref{section:fundcla}, we will define certain pairings that will allow us to perform cap products. 
For this purpose, we will assume the following hypothesis throughout the rest of the paper:
\begin{hypothesis}\label{hypothesis}
Assume that $\Sigma_B=\Sigma_{\tno{un}}(K/F)$. Hence, in particular, $u=s$ and $G(F)/G(F)_+=T(F)/T(F)_+$. 
\end{hypothesis}

For any $T(F)$-modules $M$ and $N$, let us consider the $T(F)_+$-equivariant pairing
\begin{equation}\label{equation.pairingplus}
\begin{tikzcd}
\langle\cdot,\cdot\rangle_+: &[-3em] C_c^0(T(\A_F^{S\cup\iy}),M)\ti\mcA^{S\cup\iy}(M,N) \ar[r] &[-2em] N, \\[-2em]
& (f,\phi) \ar[r,mapsto] & \langle f,\phi\rangle_+ := \int_{T(\A_F^{S\cup\iy})}\phi(t)(f(t)) d^\ti t, \\[-2em]
\end{tikzcd}
\end{equation}
where $d^\times t$ is the aforementioned Haar measure. Hence, once fixed a character $\lambda:G(F)/G(F)_+=T(F)/T(F)_+\rightarrow\pm 1$, it induces a well-defined $T(F)$-equivariant pairing
\begin{equation}\label{equation.pairingnoplus}
\begin{tikzcd}
\langle\cdot,\cdot\rangle:&[-7em] \Ind_{T(F)_+}^{T(F)} C_c^0(T(\A_F^{S\cup\iy}),M)\ti \mcA^{S\cup\iy}(M,N)(\lambda) \longrightarrow N,\\[-2em]
&\Big\langle \sum_{\ovl t\in{T(F)/T(F)_+}}f_t\oti t,\phi\Big\rangle:=[T(F):T(F)_+]\inv\sum_{\ovl t\in T(F)/T(F)_+}t\cdot \langle f_t,t\inv \phi\rangle_+, \\[-2em]
\end{tikzcd}
\end{equation}
where $\mcA^{S\cup\iy}(M,N)(\lambda)$ is the twist of the $T(F)$-representation $\mcA^{S\cup\iy}(M,N)$ by the character $\lambda$.

Assume that $M=C_c^0(T(F_S),R)$, then by  \eqref{eq.isomorphisminduced} and \eqref{isomorphism.tensorprod}
\[
\Ind_{T(F)_+}^{T(F)}C_c^0(T(\A_F^{S\cup\iy}),M)=C^0_c(T(\A_F),R)
\]
This implies that \eqref{equation.pairingnoplus} provides a final $T(F)$-equivariant pairing 
\begin{equation}\label{equation.finalpairing}
\begin{tikzcd}
\langle \cdot|\cdot \rangle: C_c^0(T(\A_F),R)\ti \mcA^{S\cup\iy}(C_c^0(T(F_S),R),N)(\lambda) \longrightarrow N, \\[-2em]
\langle f_S\otimes f^S|\phi\rangle =[T(F):T(F)_+]\inv\sum_{\ovl x\in T(F)/T(F)_+}\lambda(x)\inv\int_{T(\A^{S\cup\iy})}f^S(x,t)\cdot \phi(t)(f_S)d^\ti t. 
\end{tikzcd}
\end{equation}

All the pairings above induce cap products in $H$-(co)homology by their $H$-equivariance.
Now denote by $f_\lambda$ the projection of $f$
to the subspace
\[C_c^0(T(\A_F),R)_\lambda:=\icla{f\in C_c^0(T(\A_F),R)\tno{ with }f|_{T(F_\iy)}=\lambda}.\]
One easily compute that
\[
\langle f|\phi\rangle=\langle f_\lambda|\phi\rangle=\langle f_\lambda\mid_{T(\A^{\iy})},\phi\rangle_+, \qquad f\in C_c^0(T(\A_F),R),\quad\phi\in \mcA^{S\cup\iy}(C_c^0(T(F_S),R),N)(\lambda).
\]
Since we can identify
$H^u(G(F)_+,\bullet)^\lambda\simeq H^u(G(F),\bullet(\lambda))$,
we deduce that for all $f\in H_u(T(F),C_c^0(T(\A_F),R))$ and $\phi\in H^u(T(F)_+,\mcA^{S\cup\iy}(C_c^0(T(F_S),R),N))^\lambda$,
\begin{equation}\label{equation:capprod}
f\cap\phi=f_\lambda\cap\phi=f_\lambda\mid_{T(\A_F^{\iy})}\cap \tno{res}^{T(F)}_{T(F)_+}\phi\in N, 
\end{equation}
where $\tno{res}^{T(F)}_{T(F)_+}$ is the restriction morphism and the cap products are the induced by \eqref{equation.pairingplus},\eqref{equation.finalpairing}, respectively.

\section{Anticyclotomic $p$-adic L-functions}\label{antipLfunct}

In this section we will define anticyclotomic $p$-adic L-functions associated with a finite set $S$ of primes above $p$ that split $B$, the torus $T$ and the automorphic cohomology class $\phi_\lambda^S$. We will understand such $p$-adic L-functions as $p$-adic distributions $\mu_{\phi_\lambda^S}$ of the Galois group associated with $T$.
From this point we will assume that hypothesis \ref{hypothesis} is fulfilled.

\subsection{Defining the distribution}

Let $\mcG_T$ be the Galois group of the abelian extension of $K$ associated with $T$.
By class field theory, there is a continuous morphism
\begin{align}\label{definition.galoisT}
\rho:\ipa{T(F_\iy)/T(F_\iy)_+\ti T(\A_F^\iy)}/T(F) \ra \mcG_T.
\end{align}

If we denote by $C^0$ the space of locally constant functions, the pullback of $\rho$ together with the cap product by $\eta$ give the following morphism
\begin{equation}\label{definition.deltamfp}
\begin{tikzcd}
 C(\mcG_T,\C) \ar[r,"\rho^*"] & H^0(T(F),C^0(T(\A_F),\C)) \ar[r,"\cap\eta"] & H_u(T(F),C_c^0(T(\A_F),\C)). \\[-2em]
\end{tikzcd}
\end{equation}

In order to define our distribution we need to construct a $T(F_S)$-equivariant morphism:
\begin{equation}\label{eq:deltaS}
    \delta_S: C^0_c(T(F_S),\C) \longrightarrow V_S^\C.
\end{equation}
Given such a $\delta_S$ we can define our distribution associated with $\phi_\lambda^S\in H_\ast^u(G(F)_+,\mcA^{S\cup\iy}(V_S^\C,\C))^\lambda$ as follows:
\begin{align}\label{definition.distribution}
\int_{\mcG_T}g d\mu_{\phi^S_\lambda}:=\ipa{\rho^\ast g\cap \eta}\cap\delta_S^\ast\phi^S_\lambda,\quad\tno{ for all }g\in C(\mcG_T,\C)
\end{align}
where the cap product is induced by the pairing \eqref{equation.finalpairing} and $\delta_S^*: \mcA^{S\cup\iy}(V_S^\C,\C)\ra \mcA^{S\cup\iy}\ipa{C_c^0(T(F_S),\C),\C}$ is the corresponding $T(F)$-equivariant pullback.

Notice that $\delta_S$ is defined once you construct $\delta_\mfp:C^0_c(T(F_\mfp),\C) \rightarrow V_\mfp^\C$, for all $\mfp\in S$.
The aim of next sections is to construct $\delta_\mfp$ for principal series or Steinberg representations.

\subsection{The morphism $\delta_\mfp$}\label{deltap}

As seen in the previous section, we want to construct a $T(F_\mfp)$-equivariant morphism
\[
     \delta_\mfp: C_c^0(T(F_\mfp),\ovl\Q) \longrightarrow V_\mfp^{\ovl\Q}.
\]


Let $\pi_\mfp$ be the automorphic local representation and fix an isomorphism $G(F_\mfp)\simeq\PGL_2(F_\mfp)$. From now on we will do the following assumptions:
\begin{hypothesis}
Let $P$ be the subgroup of upper triangular matrices, then we assume that $\imath(K_\mfp^\ti)\not\subset P$. Moreover we will assume that $\pi_\mfp$ is either principal series or Steinberg.
\end{hypothesis}

By the previous hypothesis, $V_\mfp^{\ovl\Q}$ is a quotient of 
\[
\Ind_P^G(\chi_\mfp)=\icla{f\in\GL_2(F_\mfp)\rightarrow\ovl\Q,\mbox{ locally constant }\;f\left(\bbm x_1 & y \\ & x_2 \ebm g \right)=\chi_\mfp\left(\frac{x_1}{x_2}\right)\cdot f(g)},
\]
for a locally constant character $\chi_\mfp$.
Moreover $\imath(K_\mfp^\ti)\cap P=F_\mfp^\ti$, hence we construct  
\begin{equation}\label{eq:deltaSexplicit}
\begin{tikzcd}[ampersand replacement=\&]
\delta_\mfp: C_c^0(T(F_\mfp),\ovl\Q)\ar[r] \&[-3em] \Ind_P^G(\chi_\mfp), \\[-2em] 
f\ar[r,mapsto] \& \delta_\mfp(f)\left(g\right):=
\begin{cases}
\chi_\mfp\left(\frac{x_1}{x_2}\right)\cdot f(t^{-1}), & g=\bsm x_1 \& y \\ \& x_2 \esm\imath(t)\in P\iota(K_\mfp^\ti), \\
\quad 0, &  g\not\in P\iota(K_\mfp^\ti).\\
\end{cases}
\end{tikzcd}
\end{equation}
It is clearly $T(F_\mfp)$-equivariant, thus, it induces the desired 
morphism.

Let us consider $X_\mfp$ to be 
\[
X_\mfp:=\left\{\begin{array}{ll}\uhp_\mfp=K_\mfp\setminus F_\mfp, &\mbox{if $T$ does not split at $\mfp$},\\
\PP^1(F_\mfp),&\mbox{ if splits at $\mfp$}.\end{array}\right.
\]
In both cases $X_\mfp$ comes equipped with a natural action of $G(F_\mfp)$ given by fractional linear transformations. We write $\tau_\mfp$ and $\bar\tau_\mfp$ for the two fixed points by $\iota(T(F_\mfp))$ in $X_\mfp$. Since $\iota(K_\mfp^\ti)\cap P=F_\mfp^\ti$, in the split case $\tau_\mfp,\bar\tau_\mfp\neq\iy$. 
Notice that $v_1^\mfp=(1,-\bar\tau_\mfp)$ and $v_2^\mfp=(1,-\tau_\mfp)$ define a pair of simultaneous eigenvectors of all $\iota(T(F_\mfp))$. Indeed, for any $\tilde t\in K_\mfp^\ti\subseteq\GL_2(F_\mfp)$, if we write $\iota(\tilde t)=\bsm a&b\\c&d\esm$, then 
\begin{equation}\label{equation.eigenvalue}
(1,-\bar\tau_\mfp)\iota(\tilde t)=(1,-\bar\tau_\mfp)\bbm a & b \\ c & d \ebm =(a-c\bar\tau_\mfp) (1,-\iota(\tilde t\inv)\bar \tau_\mfp )=\lambda_{\tilde t}(1,-\bar\tau_\mfp),\qquad (1,-\tau_\mfp)\iota(\tilde t)=\bar \lambda_{\tilde t}(1,-\tau_\mfp).
\end{equation}
\begin{lemma}\label{lemma:mordelta}
The morphism $\delta_\mfp$ is given by  
\[
\delta_\mfp:C^0_c(T(F_\mfp),\ovl\Q)\longrightarrow \Ind_P^G(\chi_\mfp);\qquad\delta_\mfp(f)\bsm a&b\\c&d\esm=\chi_\mfp\ipa{\frac{ad-bc}{(c\tau_\mfp+d)(c\ovl\tau_\mfp+d)}}\cdot f\ipa{\frac{c\bar\tau_\mfp+d}{c\tau_\mfp+d}}.
\]
In particular, if $T(F_\mfp)$ does not split then $\delta_\mfp$ is bijective.
\end{lemma}
\begin{proof}
From the relations
$(1,-\bar\tau_\mfp)\iota(\tilde t)=\lambda_{\tilde t}(1,-\bar\tau_\mfp)$ and $(1,-\tau_\mfp)\iota(\tilde t)=\bar \lambda_{\tilde t}(1,-\tau_\mfp)$
we deduce
\begin{equation}\label{eqiotat}
\imath(\tilde t)=\frac{1}{\tau_\mfp-\ovl \tau_\mfp}\bbm \lambda_{\tilde t}\tau_\mfp-\ovl\lambda_{\tilde t}\ovl \tau_\mfp & \tau_\mfp\ovl\tau_\mfp(\ovl\lambda_{\tilde t}-\lambda_{\tilde t}) \\ \lambda_{\tilde t}-\ovl\lambda_{\tilde t}  & \ovl\lambda_{\tilde t}\tau_\mfp-\lambda_{\tilde t}\ovl\tau_\mfp \ebm=\frac{\ovl\lambda_{\tilde t}}{\tau_\mfp-\ovl \tau_\mfp}\bbm t\tau_\mfp-\ovl \tau_\mfp & \tau_\mfp\ovl\tau_\mfp(1-t) \\ t-1  & \tau_\mfp-t\ovl\tau_\mfp \ebm,
\end{equation}
where $t=\lambda_{\tilde t}/\bar\lambda_{\tilde t}\in T(F_\mfp)$. Hence, if we have
\[
\bbm a&b\\c&d\ebm=\bbm x_1&y\\&x_2\ebm\imath(\tilde t)=\frac{\ovl\lambda_{\tilde t}}{\tau_\mfp-\ovl \tau_\mfp}\bbm x_1&y\\&x_2\ebm\bbm t\tau_\mfp-\ovl \tau_\mfp & \tau_\mfp\ovl\tau_\mfp(1-t) \\ t-1  & \tau_\mfp-t\ovl\tau_\mfp \ebm
\]
we obtain the identities
\[
 -\frac{d}{c}  =  \frac{t\inv \tau_\mfp-\ovl\tau_\mfp}{t\inv -1},\qquad
 ad-bc=x_1x_2\lambda_{\tilde t}\ovl\lambda_{\tilde t},\qquad 
 t\inv =  \frac{\ovl\tau_\mfp+\frac{d}{c}}{\tau_\mfp+\frac{d}{c}}=\frac{c\ovl\tau_\mfp+d}{c\tau_\mfp+d} ,\qquad c=\frac{x_2(t-1)\ovl\lambda_{\tilde t}}{\tau_\mfp-\ovl\tau_\mfp}.
\]
Using such identities, the result follows from a straightforward computation.
\end{proof}

Our newly explained construction provides an element in $\Hom(C^0(\mcG_T,\C_p),\C_p)$, namely, a $p$-adic distribution. It is interesting to study whether these distributions extend to continuous $p$-adic measures or locally analytic distributions. In the ordinary setting, that is to say when $\chi_\mfp$ has values in $\mfo_{\C_p}^\times$, this has been done in \cite[theorem 5.7]{HndzMol2}. In this situation, $\mu_{\phi_\lambda^S}$ extends to a continuous $p$-dic measure.

\subsection{Interpolation properties}\label{section:IntProp}

As we have previously emphasized, we have to think of $\mu_{\phi_\lambda^S}$ as a generalization of Bertolini-Darmon anticyclotomic $p$-adic L-function. Hence, it should have a link to the classical L-function, namely, an interpolation property.

In order to have an explicit interpolation formula, write $N\subseteq\mfo_F$ for the level of $\pi$. Write $\Pi$ for the Jacquet-Langlands lift of $\pi$ to $\PGL_2$, and let $U_0(N)\subseteq \GL_2(\A_F^\infty)$ be the usual open compact subgroup of upper triangular matrices modulo $N$. 
Notice that the usual space of automorphic forms for $\PGL_2$ of parallel weight $2$ and level $U_0(N)$ can be described as
\begin{equation}\label{defS2}
M_{\udl 2}(U_0(N)):=\Hom_{(\mfg_\infty,\mcK_\infty)}\ipa{D(\underline{2}),\mcA(\PGL_2)^{U_0(N)}},    
\end{equation}
where $\mcA(\PGL_2)^{U_0(N)}$ is the space of $U_0(N)$-invariant automorphic forms for $\PGL_2/F$ and the $(\mfg_\infty,\mcK_\infty)$-module $D(\underline 2)=\bigotimes_{\sigma\mid\infty}D_\sigma(2)$, where $D_\sigma(2)$ is the $(\mfg_\sigma,\mcK_\sigma)=(\mfgltwo(\R),O(2))$-module of discrete series of weight $2$, when $F_\sigma=\R$, and $D_\sigma(2)$ is the $(\mfg_\sigma,\mcK_\sigma)=(\mfgltwo(\C),U(2))$-module $\pi(\mu,\mu^{-1})$ with $\mu(t)=(t/\bar t)^{\frac{1}{2}}$ when $F_\sigma=\C$. In \cite[\S 3.2]{preprintsanti2}, a normalized automorphic form $\Psi\in M_{\udl 2}(U_0(N))$ generating $\Pi$ is introduced so that certain period integral of $\Psi$ agrees with the (completed) L-function associated with $\Pi$. In fact, in case $F$ totally real $\Psi$ corresponds to the normalized Hilbert newform.

To provide similar definitions for the group $G$, let $D$ be the distriminant of $K$ and let us consider $c\subset\mfo_F$ an ideal prime to $S$. In order to simplify the resulting formulas, we will assume that, for every finite place $v$, either $\ord_v(c)\geq \ord_v(N)$ or $\ord_v(c)=0$, with $\ord_v(N)\leq 1$ in case $\ord_v(c)=0$ and $K_v$ non-split. In a situation where these conditions are not fulfilled one can still use the results of \cite{preprintsanti3} to compute the interpolation formulas, but they become much more complicated. For any finite place $v$, we say that an Eichler order $\mcO_{N,v}\subseteq B_v$ of discriminant $N\mfo_{F_v}$ is \emph{$c$-admissible} if the intersection $\mfo_{c,v}=K_v\cap\mcO_{N,v}$ is an order of conductor $c\mfo_{F_v}$ and, if in addition $v\mid (N,c)$, then $\mcO_{N,v}$ is the intersection of two maximal orders $\mcO_{B,v}'$ and $\mcO_{B,v}''$ of $B_v$ such that 
    $\mcO_{B,v}'\cap K_v=\mfo_{c,v}$ and $\mcO_{B,v}''\cap K_v=\mfo_{c/N,v}$.
By \cite[lemma 3.6]{preprintsanti3}, for $c$ and $N$ satisfying the above hypothesis admissible Eichler orders always exist.  In addition, we will assume that $V_S$ is a product of local representations that are either spherical or Steinberg, equivalently $\mfp^2\nmid N$ for all $\mfp\in S$, hence $V_\mfp$ are quotients of $\Ind_P^G(\chi_\mfp)$ with $\chi_\mfp$ unramified (this corresponds to hypothesis \ref{restEF}). 

 Let $U_{NS}=U_N^S\times U_0(S)$, where $U_N^S=\prod_{v\nmid S\cup\infty}\mcO_{N,v}^\ti$ and $U_0(S)=\prod_{\mfp\in S}U_0(\mfp)$.
In analogy with \eqref{defS2} 
\begin{equation}\label{defS22}
M_{\udl 2}(U_{NS}):=\Hom_{(\mfg_\infty,\mcK_\infty)}\ipa{D(\underline{2}),\mcA(G)^{U_{NS}}},    
\end{equation}
where $\mcA(G)^{U_{NS}}$ is the space of $U_{NS}$-invariant automorphic forms for $G/F$ and the $(\mfg_\infty,\mcK_\infty)$-module $D(\underline 2)=\bigotimes_{\sigma\in\Sigma_B}D_\sigma(2)$. In \cite[\S 3.2]{preprintsanti2}, a natural bilinear inner product $\langle\;,\;\rangle:M_{\udl 2}(U_{NS})\times M_{\udl 2}(U_{NS})\rightarrow\C$ is defined.
As explained in \cite[remark 3.2]{preprintsanti2}, if $F$ is totally real and $G=\PGL_2$ then we have $\langle\phi,\bar\phi\rangle={\rm vol}(U_0(N))\left(2\pi\right)^d(\phi,\phi)_{U_0(N)}$, where $(\;,\;)_{U_0(N)}$ is the usual Petersson inner product.
In \cite[\S 3.1]{preprintsanti2} an Eichler-Shimura morphism ${\rm ES}_\lambda$ is described for a fixed characted $\lambda$ (see also \cite{ESsanti}):
\[
{\rm ES}_\lambda:M_{\udl 2}(U_{NS})\longrightarrow H^u(G(F)_+,\mcA^{\iy}(\C)^{U_{NS}})^{\lambda}.
\] 
If we assume that $\phi^S_\lambda\in H^u(G(F)_+,\mcA^{S\cup\iy}(V_S,\C)^{U_N^S})^{\lambda}$ then we can consider $\Phi_{\alpha_S}\in M_{\udl 2}(U_{NS})$ such that
\[
{\rm ES}_\lambda\ipa{\Phi_{\alpha_S}}=\phi^S_\lambda(f_S),\qquad f_S=\bigotimes_{\mfp\in S}f_{0,\mfp}\in V_S,
\]
where the vectors $f_{0,\mfp}\in V_\mfp^{U_0(\mfp)}$ are defined in the formula \eqref{deff0} of the appendix and $\phi^S_\lambda(f_S)$ is the image through the evaluation morphism \eqref{eqevaluationmap1}. Since $U_\mfp f_{0,\mfp}=\alpha_\mfp f_{0,\mfp}$, with $\alpha_\mfp=\chi_\mfp(\varpi_\mfp)^{-1}$, the form $\Phi_{\alpha_S}$ can be seen as the $\mfp$-stabilization of a newform for all $\mfp\in S$.

Let $\epsilon(B_v)=1$ if $B_v\simeq M_2(F_v)$ and $\epsilon(B_v)=-1$ otherwise.
We say that a finite character $\xi\in C(\mcG_T,\C)$ has conductor $c$ outside $S$ if the conductor $c_\xi$ of $\rho^\ast\xi$ satisfies $c\mfo_{F_v}=c_\xi\mfo_{F_v}$, for any finite place $v\not\in S$. We will write $\xi_v=\rho^\ast\xi\mid_{T(F_v)}$, for any finite place $v$. The embedding $K\subset B$ provides a decomposition $B=K\oplus KJ_0$, with  $J_0$ in the normalizer of $K$ in $B$ satisfying $J_0^2=M\in F^\times$. By \cite[lemma 3.6]{preprintsanti3} there exists $k_0=(k_{0,v})_v\in T(\A_F)$ such that $k_0^{-1}J_0\in G(F_\infty)_+\times w_{\Sigma^{D,S}}\hat\mcO_N^\times$, where $w_{\Sigma^{D,S}}$ is the Atkin-Lehner involution $w_{\Sigma^{D,S}}=\prod_{v\in \Sigma^{D,S}}w_v$, with $\Sigma^{D,S}=\{v\mid N,\;v\not\in S:\;{\rm ord}_v(c)=0;\;v\nmid D\}$. We denote by $\varepsilon(\Sigma^{D,S})$ the sign that provides the action of $w_{\Sigma^{D,S}}$ on $\pi^{U_N^S}$.

\begin{theorem}\label{THMintprop}
Let $\xi\in C(\mcG_T,\C)$ be a finite character of conductor $c$ outside $S$.  If $\rho^\ast\xi\mid_{T(F_\iy)}\neq\lambda$ or the local root number $\epsilon(1/2,\pi_v,\xi_v)\neq\xi_v\psi_{K_v}(-1)\epsilon(B_v)$ for any finite place $v\not\in S$, then $\int_{\mcG_T}\xi d\mu_{\phi^S_\lambda}=0$. Otherwise, 
\[
\ipa{\int_{\mcG_T}\xi d\mu_{\phi^S_\lambda}}^2=
\frac{\varepsilon({\Sigma^{D,S}})2^{\#\Sigma_D^S}L_{c}(1,\psi_{K})^2h^2
}{\rho^\ast\xi^{-1}(k^S)|c_\xi^2 D|^{\frac{1}{2}}C} L^{\Sigma^S}(1/2,\Pi,\rho^\ast\xi)\frac{\langle \Phi_{\alpha_S},\Phi_{\alpha_S}\rangle}{\langle \Psi,\Psi\rangle}\frac{{\rm vol}(U_0(N))}{{\rm vol}(U_N){\rm vol}(\mfo_{K_S}^\ti/\mfo_{F_S}^\ti)^{2}}\prod_{\mfp\in S}C(\pi_\mfp)C(\pi_\mfp,\xi_\mfp),
\]
where $h=\#\mcO_{+,\mbT}$, $L_c(1,\psi_{K})$ is the product of local L-functions at places $v\mid c$, $\Sigma_D^S=\{v\mid (N,D);\;\ord_v(c_\xi)=0;\;\mfp\not\in S\}$, $\Sigma^S=\{v\mid (N,c)\}\cup\{\mfp\in S;\;\mfp\nmid(N,D)\}$,  $L^{\Sigma^S}(1/2,\Pi,\rho^\ast\xi)$ is the L-function with the local factors at places $v\mid \Sigma^S\cup\infty$ excluded, $k^S=(k^S_v)\in T(\A_F)$ satisfies $k^S_v=k_{0,v}$ at places $v\not\in S$ and $J_0k_\mfp^S \in P$ at places $\mfp\in S$,
\[
C=(-2^{2}\pi^{2})^{r_{K/F,\R}}\cdot (-2^{3}\pi^{3})^{s_F}\cdot(2^{4}\pi^{3})^{r_{K/F,\C}};\qquad C(\pi_\mfp)=\left\{
    \begin{array}{ll}
       \zeta_\mfp(1)^2L(0,\chi_\mfp^2)\zeta_{\mfp}(2)^{-1},  &\mfp\nmid N;  \\
        \zeta_\mfp(1)\zeta_{\mfp}(2)^{-1},&\mfp\mid N,\\
    \end{array}\right.
\]
and, finally, if we write $\beta_\mfp=\chi_\mfp(\varpi_\mfp)q_\mfp$ then
    \[
    C(\pi_\mfp,\xi_\mfp)=\left\{
    \begin{array}{ll}
    \varepsilon(\pi_\mfp,\xi_\mfp)\chi_{\mfp}\left(\frac{D}{(\tau_\mfp-\ovl\tau_\mfp)^2}\right),&\ord_\mfp(c_\xi)=0;\\
    \beta_\mfp^{2\ord_\mfp(c_\xi)}\chi_{\mfp}\left(\frac{D}{(\tau_\mfp-\ovl\tau_\mfp)^2}\right),&\ord_\mfp(c_\xi)>0,
    \end{array}\right.,\qquad \varepsilon(\pi_\mfp,\xi_\mfp)=\frac{L(-1,\xi_\mfp\cdot\chi^{-1}_\mfp\circ{\rm N}_{K_\mfp/F_\mfp})}{L(0,\xi_\mfp\cdot\chi_\mfp\circ{\rm N}_{K_\mfp/F_\mfp})}.
    \]
\end{theorem}
\begin{proof}
Let us consider the $T(F)$-equivariant morphism
\[
\varphi:C^0(T(\A_F),\C)\otimes\mcA^\iy(\C)(\lambda)\longrightarrow C^0(T(\A_F),\C);\qquad\varphi(f\oti\phi)(z,t):=\frac{f(z,t)}{\lambda(z)}\cdot\phi(t),
\]
for all $z\in T(F_\iy)$ and $t\in T(\A_F^\iy)$, and the natural pairing $\langle\cdot,\cdot\rangle_T:C^0(T(\A_F),\C)\ti C_c^0(T(\A_F),\C)\ra\C$ given by the Haar measure
\[
\langle f_1,f_2\rangle_T:=\sum_{z\in CC}\int_{\mbT}\int_{T(\A_F^\iy)}f_1(xs,t)\cdot f_2(xs,t)d^\ti t d^\times s,\qquad CC=T(F_\iy)/T(F_\iy)_+.
\]
For any $f\in C^0(T(\A_F),\C)$ and $f_2\in C_c^0(T(\A_F),\C)$, write $f\cdot f_2=f_S\otimes f^S$, where $f_S\in C_c(T(F_S),\C)$ and $f^S\in C_c(T(\A_F^S),\C)$, and let $H=\prod_\mfp H_\mfp\subseteq T(F_S)$ be a small enough open compact subgroup so that $f_S$ is $H$-invariant, namely, $f_S=\sum_{t_S\in T(F_S)/H}f_S(t_S)\indi_{t_SH}$. We compute using the concrete description of $\langle\cdot|\cdot\rangle$ provided in \eqref{equation.finalpairing}: 
\begin{eqnarray*}
\langle\varphi(f\oti \phi(\delta_S(\indi_H))),f_2\rangle_T
&=&{\rm vol}(\mbT)\sum_{z\in CC}\lambda(z)\inv\cdot\int_{T(\A_F^{S\cup\iy})}\int_{T(F_{S})}f^S(z,t^S)\cdot f_S(t_S)\cdot\phi(t^S)(\delta_S(\indi_{t_SH}))d^\ti t^S d^\ti t_S\\
&=&{\rm vol}(\mbT)\cdot{\rm vol}(H)\sum_{z\in CC}\lambda(z)\inv\cdot\int_{T(\A_F^{S\cup\iy})}f^S(z,t^S)\cdot\delta_S^\ast\phi(t^S)(f_S)d^\ti t^S \\
&=&[T(F):T(F)_+]\cdot{\rm vol}(\mbT)\cdot{\rm vol}(H)\cdot\langle f\cdot f_2|\delta_S^\ast\phi\rangle,
\end{eqnarray*}
for all $\phi\in \mcA^{S\cup\iy}(V_S,\C)(\lambda)$. 
Since $[T(F):T(F)_+]=2^{r_{K/F,\R}}$ and ${\rm vol}(\mbT)=2^{r_{K/F,\C}}4^{s_F}$, we obtain by definition
\[\int_{\mcG_T}\xi d\mu_{\phi^S_\lambda}=(\rho^\ast\xi\cap\eta)\cap\delta_S^\ast\phi^S_\lambda=\frac{1}{2^{[F:\Q]}\cdot{\rm vol}(H)}(\rho^\ast\xi\cup \phi_H)\cap\eta,\qquad \phi_H:=\phi_\lambda^S(\delta_S(\indi_H)),
\]
where the cap and cup products in the  third identity correspond to the pairings $\langle\cdot,\cdot\rangle_T$ and $\varphi$. In \cite[Theorem 1.3]{preprintsanti3} and expression for $\left((\rho^\ast\xi\cup \phi_H)\cap\eta\right)^2$ is obtained in terms of the classical L-function. More precisely, $\left((\rho^\ast\xi\cup \phi_H)\cap\eta\right)=0$ unless $\epsilon(1/2,\pi_v,\xi_v)=\xi_v\psi_{K_v}(-1)\epsilon(B_v)$ and $\rho^\ast\xi\mid_{T(F_\iy)}=\lambda$ and, if this is the case, then 
\begin{equation*}\label{equation.theorem}
\ipa{\frac{(\rho^\ast\xi\cup \phi_H)\cap\eta}{{\rm vol}(H)}}^2 = \frac{\varepsilon({\Sigma^D})2^{\#\Sigma_D}L_{c_\xi}(1,\psi_{K})^2h^2
\bar C}{\rho^\ast\xi^{-1}(k_{0})|c_\xi^2 D|^{\frac{1}{2}}}\cdot L^{\Sigma_{c_\xi}}(\frac{1}{2},\Pi,\rho^\ast\xi)\cdot\frac{\langle \Phi_H,\Phi_H\rangle}{\langle \Psi,\Psi\rangle}\cdot\frac{{\rm vol}(U_0(N))}{{\rm vol}(U_N)}\prod_{\mfp\in S}\frac{\lambda_\mfp(\delta_\mfp(\indi_{H_\mfp}),J_0)}{{\rm vol}(H_\mfp)^2\lambda_\mfp(\phi_{0,\mfp},J_0)},
\end{equation*}
where $\bar C=(-\pi^{-2})^{r_{K/F,\R}}\cdot (-2\pi^{-3})^{s_F}(2^{-2}\pi^{-3})^{r_{K/F,\C}}$,  $\Sigma_{c_\xi}:=\{v\mid (N,c_\xi)\}$, $\Sigma_D=\{v\mid (N,D);\;\ord_v(c_\xi)=0\}$, $\Sigma^{D}=\{v\mid N:\;{\rm ord}_v(c_\xi)=0;\;v\nmid D\}$, $\Phi_H\in M_{\underline k}(U)$ for some $U\subseteq G(\A_F^\infty)$ is such that ${\rm ES}_\lambda(\Phi_H)=\phi_H$, the open $U_N=\prod_{v\nmid\infty}\mcO_{N,v}^\times$ with $\mcO_{N,\mfp}$ a $c_\xi$-admissible Eichler order, $\phi_{0,\mfp}=\pi^{\mcO_{N,\mfp}^\times}$ and
    \begin{equation}\label{localfactlambda}
\lambda_\mfp(\phi_\mfp,J_0)=\int_{T(F_\mfp)}\xi_\mfp(t)\frac{\langle\pi_\mfp(t)\phi_{\mfp}^\lambda,\pi_\mfp(J_0)\phi_{\mfp}^\lambda\rangle_\mfp}{\langle\phi_{\mfp}^\lambda,\phi_{\mfp}^\lambda\rangle_\mfp} d^\times t,
    \end{equation}
being $\langle\;,\;\rangle_\mfp$ any $G(F_\mfp)$-invariant bilinear inner product, and $d^\times t$ any Haar measure. 
Notice that
\begin{equation}\label{PhiPhiint}
    \langle \Phi_H,\Phi_H\rangle\prod_{\mfp\in S}\frac{\lambda_\mfp(\delta_\mfp(\indi_{H_\mfp}),J_0)}{{\rm vol}(H_\mfp)^2\lambda_\mfp(\phi_{0,\mfp},J_0)}=\langle \Phi_{\alpha_S},\Phi_{\alpha_S}\rangle\prod_{\mfp\in S}\int_{T(F_\mfp)}\xi_\mfp(t)\frac{\langle\pi_\mfp(t)\delta_{\mfp}(\indi_{H_\mfp}),\pi_\mfp(J_0)\delta_{\mfp}(\indi_{H_\mfp})\rangle_\mfp}{{\rm vol}(H_\mfp)^2\lambda_\mfp(\phi_{0,\mfp},J_0)\langle f_{0,\mfp},f_{0,\mfp}\rangle_\mfp} d^\times t.
\end{equation}
On the one hand, the main factor is computed in corollary \ref{coroLIcomp}:
\[
\int_{T(F_\mfp)}\xi_\mfp(t)\frac{\langle\pi_\mfp(t)\delta_{\mfp}(\indi_{H_\mfp}),\pi_\mfp(J_0)\delta_{\mfp}(\indi_{H_\mfp})\rangle_\mfp}{{\rm vol}(H_\mfp)^2\langle f_{0,\mfp},f_{0,\mfp}\rangle_\mfp} d^\times t=\frac{\chi_\mfp(D)\zeta_\mfp(1){\rm vol}(\mfo_{K_\mfp}^\ti/\mfo_{F_\mfp}^\ti)^{-1}\xi_\mfp^{-1}(k_{\mfp}^S)}{\chi_{\mfp}\left((\tau_\mfp-\ovl\tau_\mfp)^2\right)L(1,\psi_{K_\mfp}) L(2,\chi_\mfp^{-2})}\left\{\begin{array}{ll}
        \varepsilon(\pi_\mfp,\xi_\mfp),&\mfp\nmid c_\xi;\\
        \alpha_\mfp^{-n_\xi}\beta_\mfp^{n_\xi},&\mfp\mid c_\xi.
    \end{array}\right.
\] 
where $n_\xi=\ord_\mfp(c_\xi)$.
On the other hand, it is computed in \cite[proposition 3.12]{CST} the value 
\begin{equation*}\label{beta_p}
    \int_{T(F_\mfp)}\xi_\mfp(t_\mfp)\frac{\langle\pi_\mfp(t)\phi_{0,\mfp},\phi_{0,\mfp}\rangle_\mfp}{\langle \phi_{0,\mfp},\phi_{0,\mfp}\rangle_\mfp} d^\times t={\rm vol}(\mfo_{K_\mfp}^\ti/\mfo_{F_\mfp}^\ti)\left\{
    \begin{array}{ll}
       \frac{\zeta_{\mfp}(2) L(1/2,\Pi_\mfp,\xi_\mfp)}{L(1,\psi_{K_\mfp}) L(1,\Pi_\mfp,{\rm ad})},  &\ord_\mfp(N)=n_{\xi_\mfp}=0;  \\
        \frac{\zeta_{\mfp}(2) L(1/2,\Pi_\mfp,\xi_\mfp)}{L(1,\Pi_\mfp,{\rm ad})}L(1,\psi_\mfp)q_\mfp^{-n_{\xi_\mfp}}, &\ord_\mfp(N)=0,\;n_{\xi_\mfp}>0;  \\
        \frac{ L(1/2,\Pi_\mfp,\xi_\mfp)}{ \zeta_\mfp(1)},&\mfp\parallel N,\;n_{\xi_\mfp}=0,\;T(F_\mfp)=F_\mfp^\times;\\
        2,&\mfp\mid (N,D),\;n_{\xi_\mfp}=0;\\
        L(1,\psi_\mfp)q_\mfp^{-n_{\xi_\mfp}},&n_{\xi_\mfp}>\ord_\mfp(N)>0
    \end{array}\right.
\end{equation*}
Notice that $L(1,\Pi_\mfp,{\rm ad})=\zeta_\mfp(1)$ in the Steinberg case and $L(1,\Pi_\mfp,{\rm ad})=\zeta_\mfp(1)L(2,\chi_\mfp^{-2})L(0,\chi_\mfp^2)$ in the spherical case. Moreover, 
\[
\lambda_\mfp(\phi_{0,\mfp},J_0)=s_\mfp\xi_\mfp(k_{0,\mfp})\int_{T(F_\mfp)}\xi_\mfp(t_\mfp)\frac{\langle\pi_\mfp(t)\phi_{0,\mfp},\phi_{0,\mfp}\rangle_\mfp}{\langle \phi_{0,\mfp},\phi_{0,\mfp}\rangle_\mfp} d^\times t
\]
where $s_\mfp$ is the eigenvalue of the Atkin-Lehner $w_\mfp$ operator in case $\mfp\nmid c_{\xi}D$, otherwise it is $1$. 
Hence, we obtain that the local terms of the right hand side of \eqref{PhiPhiint} are given by
\begin{equation}\label{PhiPhiint2}
\frac{\xi_\mfp^{-1}(k_{\mfp}^Sk_{0,\mfp})}{{\rm vol}(\mfo_{K_\mfp}^\ti/\mfo_{F_\mfp}^\ti)^{2}}\chi_{\mfp}\left(\frac{D}{(\tau_\mfp-\ovl\tau_\mfp)^2}\right)
\left\{
    \begin{array}{ll}
       \frac{\zeta_\mfp(1)^2L(0,\chi_\mfp^2)}{\zeta_{\mfp}(2) L(1/2,\Pi_\mfp,\xi_\mfp)}\varepsilon(\pi_\mfp,\xi_\mfp),  &\ord_\mfp(N)=n_{\xi_\mfp}=0;  \\
        \frac{\zeta_\mfp(1)^2L(0,\chi_\mfp^2)}{\zeta_{\mfp}(2) L(1/2,\Pi_\mfp,\xi_\mfp)}\frac{\beta_\mfp^{2n_\xi}}{L(1,\psi_{K_\mfp})^2 }, &\ord_\mfp(N)=0,\;n_{\xi_\mfp}>0;  \\
        \frac{s_\mfp\zeta_\mfp(1)}{ \zeta_{\mfp}(2)L(1/2,\Pi_\mfp,\xi_\mfp)}\varepsilon(\pi_\mfp,\xi_\mfp),&\mfp\parallel N,\;n_{\xi_\mfp}=0,\;T(F_\mfp)=F_\mfp^\times;\\
        \frac{\zeta_\mfp(1)}{2 \zeta_{\mfp}(2)}\varepsilon(\pi_\mfp,\xi_\mfp),&\mfp\mid (N,D),\;n_{\xi_\mfp}=0;\\
        \frac{\zeta_\mfp(1)}{\zeta_{\mfp}(2)}\frac{\beta_\mfp^{2n_\xi}}{L(1,\psi_{K_\mfp})^2 },&n_{\xi_\mfp}>\ord_\mfp(N)>0
    \end{array}\right.
\end{equation}
and the result follows.
\end{proof}

\begin{remark}\label{remark:excepzeros}
Notice that $\varepsilon(\pi_\mfp,\xi_\mfp)=\frac{\ipa{1-\xi_\mfp^{-1}(\varpi_\mfp)\alpha_\mfp^{-1}}\ipa{1-\xi_\mfp(\varpi_\mfp)\alpha_\mfp^{-1}}}{\ipa{1-\xi_\mfp^{-1}(\varpi_\mfp)\beta_\mfp^{-1}}\ipa{1-\xi_\mfp(\varpi_\mfp)\beta_\mfp^{-1}}}$ if $T(F_\mfp)$ splits, $\varepsilon(\pi_\mfp,\xi_\mfp)=\frac{1-\alpha_\mfp^{-2}}{1-\beta_\mfp^{-2}}$ if $T(F_\mfp)$ inert, and $\varepsilon(\pi_\mfp,\xi_\mfp)=\frac{1-\xi_\mfp(\varpi_\mfp^{K})\alpha_\mfp^{-1}}{1-\xi_\mfp(\varpi_\mfp^{K})\beta_\mfp^{-1}}$ if $T(F_\mfp)$ ramifies, being $\varpi_\mfp^K$ the uniformizer of $K_\mfp$ (see proposition \ref{EulerFapp}).
If $\pi_\mfp\simeq \tno{St}_\C(F_\mfp)(\pm)$ is the Steinberg representation twisted by $(\pm1)^{v_\mfp\det g}$, then we have that $\alpha_\mfp=\pm 1$.

Hence if $\xi_\mfp$ is unramified and $\xi_\mfp(\varpi_\mfp^K)=\pm1$ then $\varepsilon(\pi_\mfp,\xi_\mfp)=0$. This phenomena is known as {\bf exceptional zero} and the aim of the following chapters is to relate these exceptional zeroes with points in the extended Mordell-Weil group.
\end{remark}



\section{Construction of Fornea-Gehrmann plectic points}\label{section:fornea-gehrmann}

M. Fornea and L. Gehrmann came in \cite{fornea2021plectic} with a very novel and interesting construction with which they are able to provide elements in certain completed tensor products of elliptic curves. These elements, called {\bf plectic points}, generalize classical constructions of Heegner and Darmon points. Moreover, they are conjectured to be non-zero in rank $r\leq[F:\Q]$ situations, hence they open the door to important progress towards our understanding of the Birch and Swinnerton-Dyer conjecture in rank $r\geq 2$ situations (although we are restricted to the case $r\leq [F:\Q]$). 

During this section let $S$ be a set of finite places above $p$ and let us fix an isomorphism $G(F_S)\simeq\PGL_2(F_S)$. Assume that the representation $V_S^{\Z_p}=\bigotimes_{\mfp\in S}\tno{St}_{\Z_p}(F_\mfp)(\varepsilon_\mfp)$, where $\tno{St}_{\Z_p}(F_\mfp)=C^0(\PP^1(F_\mfp),\Z_p)/\Z_p$ with the usual action of $\PGL_2(F_\mfp)$, $\varepsilon_\mfp:G(F_\mfp)\ra \pm 1$ is given by $g\mapsto (\pm 1)^{v_\mfp\det g}$, and $T$ does not split at any $\mfp\in S$.

\subsection{$S$-adic uniformization}

\newcommand{\ddivz}{\Delta^0}
\newcommand{\ddivv}{\Delta}

Denote by $\tno{Cov}(X)$ the poset of open coverings of a topological space $X$ ordered by refinement. Write analogously as above $\tno{St}_{\Z}(F_\mfp)=C^0(\PP^1(F_\mfp),\Z)/\Z$. Let us consider again $\uhp_\mfp=K_\mfp\setminus F_\mfp$ the $p$-adic upper half plane.
Let $\Delta_\mfp:=\tno{Div}(\uhp_\mfp)$ and $\Delta_\mfp^0:=\tno{Div}^0(\uhp_\mfp)\subset \Delta_\mfp$ the set of divisors and degree zero divisors.
Notice we have the multiplicative integral $i:\Hom(\tno{St}_\Z(F_\mfp),\Z) \ra \Hom(\ddivz_\mfp,K_\mfp^\ti)$ defined as follows,
\begin{equation}\label{definition.multiplicativeintegral}
\begin{tikzcd}
i_\mfp:&[-2em] \Hom(\tno{St}_\Z(F_\mfp),\Z) \ar[r] & \Hom(\ddivz_\mfp,K_\mfp^\ti) \\[-2em]
& \psi\ar[r,mapsto] & \ipa{\tau_2-\tau_1\mapsto \mint_{{\scriptscriptstyle \PP^1(F_\mfp)}} \frac{x-\tau_2}{x-\tau_1}d\mu_{\psi}(x)=\lim_{\mcU\in\tno{Cov}(\PP^1(F_\mfp))}\prod_{U\in\mcU}\ipa{\frac{x_U-\tau_2}{x_U-\tau_1}}^{\psi(\indi_U)}}, \\[-2em]
\end{tikzcd}
\end{equation}
where each $x_U\in U$. 
Twisting by a character $\varepsilon_\mfp$, the $G(F_\mfp)$-invariant morphism \eqref{definition.multiplicativeintegral} provides a $G(F_\mfp)$-invariant morphism
\begin{equation}\label{definition.multiplicativeintegral2}
i_\mfp: \Hom(\tno{St}_\Z(F_\mfp)(\varepsilon_\mfp),\Z) \longrightarrow \Hom(\ddivz_\mfp,K_\mfp^\ti)(\varepsilon_\mfp).
\end{equation}

\begin{remark}\label{extSt}
In \cite{guitart2017automorphic} this multiplicative integral is described as follows:
If we denote $\tno{St}_{F_\mfp^\ti}:=C(\PP^1(F_\mfp),F_\mfp^\ti)/F_\mfp^\ti$,
then we have a natural morphism
\begin{equation}\label{morextSt}
   \varphi_{\tno{unv}}:\Hom(\tno{St}_\Z(F_\mfp),\Z)\longrightarrow \Hom(\tno{St}_{F_\mfp^\ti},F_\mfp^\ti),\qquad \varphi_{\tno{unv}}(\psi)(f):=\lim_{\mcU\in\tno{Cov}(\PP^1(F_\mfp))}\prod_{U\in\mcU}f(x_U)^{\psi(\indi_U)}
\end{equation}
Moreover, if we define the universal extension of $\tno{St}_{F_\mfp^\ti}$:
\begin{equation}\label{defextSt}
    \mcE_{F_\mfp^\ti}:=\left\{(\phi,y)\in C(\GL_2(F_\mfp),F_\mfp^\ti)\ti\Z:\phi\left(\bbm s & x \\  & t \ebm g\right)=t^y\cdot\phi(g) \right\}/(K_\mfp^\ti,0),
\end{equation}
then we have a natural morphism $\tno{ev}:\ddivv_\mfp\ra \mcE_{K_\mfp^\ti}$ making the diagram commutative
\[
\begin{tikzcd}
0\ar[r]&\ddivv_\mfp^0\ar[r]\ar[d,"\tno{ev}"]&\ddivv_\mfp\ar[r]\ar[d,"\tno{ev}"]&\Z\ar[r]\ar[d,"\tno{Id}"]&0\\
0\ar[r]&\tno{St}_{K_\mfp^\ti}\ar[r]&\mcE_{K_\mfp^\ti}\ar[r]&\Z\ar[r]&0
\end{tikzcd}
\]
Finally, $i$ can be described as $i=\tno{ev}^*\circ\varphi_{\tno{unv}}$.
\end{remark}

Notice that the restriction $\varepsilon_\mfp:T(F_\mfp)\ra\pm1$ is non trivial only if $\varepsilon_\mfp\neq 1$ and $T$ ramifies at $\mfp$.
Moreover, if $H_{\varepsilon_\mfp}/K_\mfp$ is the extension cut out by $\varepsilon_\mfp$, by \cite[Corollary 5.4]{Silverman}
\begin{equation}\label{non-splitTate}
E(K_\mfp)=\left\{u\in H_{\varepsilon_\mfp}^\times/q_{E_\mfp}^\Z:\;u^{-\varepsilon_\mfp(\tau)}u^\tau\in q_{E_\mfp}^\Z\right\},\qquad 1\neq\tau\in\Gal(H_{\varepsilon_\mfp}/K_\mfp).    
\end{equation}
Hence, we no longer have a Tate uniformization $K_\mfp^\times/q_{E_\mfp}^\Z\sim E(K_\mfp)$ but an isomorphism
\[
K_\mfp^\times/q_{E_\mfp}^\Z\sim E(K_\mfp)_{\varepsilon_\mfp}:=\{P\in E(H_{\varepsilon_\mfp});\;P^\tau=\varepsilon_\mfp(\tau)\cdot P\}.
\]

Given the automorphic modular symbol
$\phi_\lambda^S\in H^s_\ast(G(F)_+,\mcA^{S\cup\iy}(V_S^{\Z_p},\Z_p))^\lambda$ associated to $E$,
we can apply the morphism $i_S=\prod_{\mfp\in S}i_\mfp$ induced by \eqref{definition.multiplicativeintegral2} to obtain
\[
i_S\phi_\lambda^S\in H^u_\ast(G(F)_+,\mcA^{S\cup\iy}(\ddivv_S^0,\hat K_S^\times)(\varepsilon_S))^\lambda,\qquad \ddivv_{S}^0:=\bigotimes_{\mfp\in S}\ddivv_{\mfp}^0,\qquad \hat K_S^\ti:=\bigotimes_{\mfp\in S} \hat K_\mfp^\ti,\qquad \hat K_\mfp^\ti=K_\mfp^\ti\oti_\Z\Z_p,
\]
where in $\bigotimes_{\mfp\in S}\ddivv_{\mfp}^0$ the tensor product is taken as $\Z$-modules, and in $\bigotimes_{\mfp\in S} \hat K_\mfp^\ti$ is taken as $\Z_p$-modules. Applying the natural projection modulo $\sum_{\mfp\in S}\ipa{q_{E_\mfp}^{\Z_p}\otimes\bigotimes_{\mfq\in S\setminus\{\mfp\}} \hat K_\mfq^\ti}$, one obtains
\[
\ovl{i_S\phi_\lambda^S}\in H^u_\ast(G(F)_+,\mcA^{S\cup\iy}(\ddivv_S^0,\hat E(K_S)_{\varepsilon_S})(\varepsilon_S))^\lambda,\qquad \hat E(K_S)_{\varepsilon_S}:=\bigotimes_{\mfp\in S} \ipa{E(K_\mfp)_{\varepsilon_\mfp}\oti_\Z\Z_p},
\]
where $\varepsilon_{S}:=\prod_{\mfp\in S}\varepsilon_{\mfp}$.
Fornea an Gehrmann show in \cite{fornea2021plectic} that, assuming that Oda's conjecture \cite[Conjecture 3.8]{guitart2017automorphic} is true, $\ovl{i_S\phi_\lambda^S}$ is the restriction of a unique
\[
\psi_\lambda^S\in H^u_\ast(G(F)_+,\mcA^{S\cup\iy}(\ddivv_{S},\hat E(K_S)_{\varepsilon_S}\otimes_{\Z_p}\Q_p)(\varepsilon_{S}))^\lambda,
\]
where $\ddivv_{S}:=\bigotimes_{\mfp\in S}\ddivv_{\mfp}$ is the tensor product over $\Z$.  Throughout the paper we will also assume Oda's conjecture. 

\subsection{Plectic points}

For all $\mfp\in S$, let $\tau_\mfp\in\uhp_\mfp$ be the points fixed by $T(F)$ as in \S \ref{deltap}.
Let us consider the following well-defined morphism of $G(F)_+$-modules
\begin{equation}
\tno{ev}: \Z[G(F)_+/T(F)_+] \longrightarrow \ddivv_S; \qquad  n\cdot gT(F)_+ \longmapsto n\ipa{\bigotimes_{\mfp\in S}g\tau_\mfp}.
\end{equation}
It induces a $G(F)_+$-equivariant morphism
\[\tno{ev}:\mcA^{S\cup\iy}(\ddivv_S,\hat E(K_S)_{\varepsilon_S})(\varepsilon_{S})\longrightarrow \mcA^{S\cup\iy}(\Z[G(F)_+/T(F)_+],\hat E(K_S)_{\varepsilon_S})(\varepsilon_{S}).\]
Since we have (see \cite[Lemma 4.1]{guitart2017automorphic})
\[\mcA^{S\cup\iy}(\Z[G(F)_+/T(F)_+],\hat E(K_S)_{\varepsilon_S})(\varepsilon_{S})\simeq \tno{co}\Ind_{T(F)_+}^{G(F)_+}\ipa{\mcA^{S\cup\iy}(\hat E(K_S)_{\varepsilon_S})(\varepsilon_{S})},\]
we obtain by Shapiro's lemma 
\[\tno{ev}(\psi^S_\lambda)\in H^u_\ast(T(F)_+,\mcA^{S\cup\iy}(\hat E(K_S)_{\varepsilon_S})(\varepsilon_{S}))^\lambda.\]

Let us consider now the subspace of functions
\[
C(\mcG_T,\ovl \Q)^{\varepsilon_S}:=\{f\in C(\mcG_T,\ovl \Q),\;\rho^*f\mid_{T(F_S)}=\varepsilon_S\},
\]
where $\varepsilon_S:T(F_S)\ra\pm 1$ is now the product of the 
$\varepsilon_\mfp$. 
In this situation, the Artin map provides a decomposition:
\begin{equation}
C(\mcG_T,\ovl \Q)^{\varepsilon_S}=\bigoplus_{\lambda:T(F)/T(F)_+\ra\icla{\pm 1}}H^0\ipa{T(F)_+,C^0(T(\A_F^{S\cup\iy}),\ovl \Q)(\varepsilon_S)}.
\end{equation}
Given a locally constant character $\chi\in C(\mcG_T,\ovl \Z)^{\varepsilon_S}$ we can consider $\chi_\lambda\in H^0\ipa{T(F)_+,C^0(T(\A_F^{S\cup\iy}),\ovl \Z)(\varepsilon_S)}$ its $\lambda$-component.
Then we can define a twisted \tbf{plectic point}
\[P_\chi^S=\left(\eta^S\cap \chi_\lambda\right)\cap \tno{ev}(\psi^S_\lambda)\in \hat E(K_S)_{\varepsilon_S}\oti_\Z \ovl \Z,\]
where again the cap product is with respect to the pairing $\langle\cdot,\cdot\rangle_+$ of \eqref{equation.pairingplus} twisted by $\varepsilon_S$.

\subsection{Conjectures}\label{section:conj}

Analogously to the setting of Darmon points in \cite[Conjecture 4.3]{guitart2017automorphic}, it is conjectured that plectic points come from certain rational points of the elliptic curve $E$. Let us recall the corresponding conjectures that can be found in \cite[\S 1.4]{fornea2021plectic}.

Let $\chi\in C(\mcG_T,\ovl \Q)^{\varepsilon_S}$ be a locally constant character with $\chi\mid_{T(F_\infty)}=\lambda$ (namely, $\chi_\lambda\neq 0$). Let $H_\chi/K$ the abelian extension cut out by $\chi$. Since $\rho^*\chi\mid_{T(F_S)}=\varepsilon_S$, we can embed $H_\chi\subset H_{\varepsilon_\mfp}$, for all $\mfp\in S$. Moreover, if we consider 
\[
E(K)_\chi=\{P\in E(H_\chi)\otimes_\Z\bar\Z;\quad P^\sigma=\chi(\sigma)\cdot P\},
\]
clearly $E(K)_\chi\subset E(K_\mfp)_{\varepsilon_\mfp}$. Fornea and Gehrmann consider the natural morphism
\[
\imath_\mfp:E(K)_\chi\hookrightarrow E(K_\mfp)_{\varepsilon_\mfp}\otimes_\Z\bar\Z\hookrightarrow \hat E(K_\mfp)_{\varepsilon_\mfp}\otimes_\Z\bar\Z,
\]
and, if we write $r:=\#S$,
\[
\det:\bigwedge^r E(K)_\chi\longmapsto\hat E(K_S)_{\varepsilon_S};\qquad \det(P_1\wedge\cdots\wedge P_r):=\det\left(\begin{array}{ccc}\imath_{\mfp_1}(P_1)&\cdots&\imath_{\mfp_r}(P_1)\\&\cdots&\\\imath_{\mfp_1}(P_r)&\cdots&\imath_{\mfp_r}(P_r)\end{array}\right).
\]
The following conjecture is analogous to those of \cite[\S 1.4]{fornea2021plectic}.
\begin{conjecture}
We have that 
\begin{itemize}
\item \emph{Algebraicity and reciprocity law}: There exists $R_\chi\in \bigwedge^r E(K)_\chi$ such that
$\det R_\chi=P_\chi^S$.

\item \emph{Connection with BSD}: Assume that $\rank(E(K)_\chi)\geq r$. If
$P_\chi^S\neq 0$, we have that $\rank(E(K)_\chi)= r$.
\end{itemize}
\end{conjecture}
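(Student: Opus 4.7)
The plan is to attack the three bullets in increasing order of tractability, and to flag at the outset that all three are presently well beyond reach—even the archetypal case $r=1$, $F=\Q$, $K$ real quadratic (Darmon's original conjecture) remains open. The only analytic handle we possess on $P_\chi^S$ is Theorem B, which after applying $\varphi\circ\tno{rec}_{S_+}$ identifies its image with the leading term of $\mu_{\phi_\lambda^S}$ in $I_\chi^r/I_\chi^{r+1}$, together with the interpolation formula of Theorem \ref{THMintprop} relating this measure to classical $L$-values.

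First I would address the vanishing clause. The constant $K(x^S,\chi^S)$ of Theorem \ref{THMintprop} factorizes as $K\cdot\prod_{\nu\notin S\cup\iy}\alpha(x^S_\nu)^{1/2}$, and each local factor $\alpha(x^S_\nu)$ depends only on $x^S_\nu$ and on $\chi_\nu$, being independent of $\chi$ away from $\nu$. Hence if some $\alpha(x^S_\nu)=0$ then $\int_{\mcG_T}\chi'\,d\mu_{\phi_\lambda^S}=0$ for every finite-order $\chi'$ with the prescribed behaviour at $S$ and $\iy$; density of such $\chi'$ forces $\mu_{\phi_\lambda^S}$ itself to vanish. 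Feeding this into Theorem B places $\varphi\circ\tno{rec}_{S_+}\bpa{q_{S_+^1}\otimes\prod_{\mfp\in S_+^2}(\sigma_\mfp-1)P_\chi^{S_+^2}}$ in $I_\chi^{r+1}$, and an extension of Remark \ref{remark:vanishingT} identifying the kernel of $\varphi\circ\tno{rec}_{S_+}$ with a controlled finite extension of the image of $T(F)$ would yield $P_\chi^S=0$ in $\hat E(K_S)_{\varepsilon_S}\otimes\bar\Z$.

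For the algebraicity and reciprocity law I would follow the Stark–Heegner philosophy: produce candidate global points $P_1,\dots,P_r\in E(K)_\chi$ by a Heegner-type averaging over $\Gal(H_\chi/K)$ twisted by $\chi$, set $R_\chi:=P_1\wedge\cdots\wedge P_r$, and try to identify $\det R_\chi$ with $P_\chi^S$. The most promising route is via an exceptional-zero factorization of $\mu_{\phi_\lambda^S}$ into $r$ individual anticyclotomic measures, each of them controlled by its own Darmon point in the manner of Theorem \ref{mainTHM}; multiplicativity of the wedge-determinant pairing, combined with $r$-fold application of Theorem \ref{mainTHM}, would then deliver $\det R_\chi=P_\chi^S$. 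The $\mcG_T$-equivariance of $\tno{ev}(\psi_\lambda^S)$ under the natural Galois action on $C(\mcG_T,\bar\Q)^{\varepsilon_S}$ translates directly into the reciprocity law $P_i^\sigma=\chi(\sigma)\cdot P_i$.

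Finally the BSD bullet is formal: if $\det R_\chi=P_\chi^S\neq 0$ then $R_\chi$ is a nonzero element of $\bigwedge^r E(K)_\chi$, so $E(K)_\chi$ contains $r$ linearly independent vectors and, together with the hypothesis $\rank E(K)_\chi\geq r$, the equality $\rank E(K)_\chi=r$ follows. The main obstacle, by an enormous margin, is the algebraicity bullet: no current technique promotes a $p$-adic multiplicative integral on $\uhp_\mfp$ to a global algebraic invariant over an abelian extension of $K$ outside the CM situations covered by classical Heegner theory. Even the conjectural factorization of $\mu_{\phi_\lambda^S}$ alluded to above is itself a theorem-worthy statement, and assuming it, one still has to rationalize each individual Darmon factor—which is Darmon's original open conjecture.
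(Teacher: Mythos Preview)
The statement you are attempting is labeled as a \emph{Conjecture} in the paper (\S\ref{section:conj}), presented as an analogue of the conjectures in \cite[\S 1.4]{fornea2021plectic}. The paper offers no proof and does not claim one; there is nothing to compare your proposal against. You yourself recognize that all three parts are open, so what you have written is a speculative roadmap, not a proof.

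Two concrete issues with the roadmap are still worth flagging. For the vanishing clause, the local factor $\alpha(x^S_\nu)$ depends on $\chi_\nu$ (see its definition in the proof of Theorem \ref{THMintprop}), so the vanishing of $\prod_{\nu\notin S\cup\iy}\alpha(x^S_\nu)$ for the given $\chi$ says nothing about other characters $\chi'$ that differ from $\chi$ at places outside $S\cup\iy$; your density argument for $\mu_{\phi_\lambda^S}=0$ does not go through. Even if it did, Theorem B controls $\prod_{\mfp}(\sigma_\mfp-1)P_\chi^S$, not $P_\chi^S$ itself, and only through the map $\varphi\circ\tno{rec}_{S_+}$, whose kernel you would still have to identify. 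For the BSD bullet, your logic is inverted: from $0\neq R_\chi\in\bigwedge^r E(K)_\chi$ you correctly deduce $\rank E(K)_\chi\geq r$, but that is already the hypothesis; the content of the conjecture is the opposite inequality $\rank E(K)_\chi\leq r$, which nothing in your argument addresses. That direction is not formal even granting algebraicity.
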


\section{Plectic points and anticyclotomic $p$-adic L-functions}

\subsection{Derivatives of characters}

This section is an adaptation of \cite[\S 3]{DasSp} to our setting.
Let $X$ be a totally disconnected topological Hausdorff space and $A$ a locally profinite group. We write 
\[
C^\diamond(X,A):=C^0(X,A)+\sum_\mcO C(X,\mcO)\subseteq C(X,A), \qquad C_c^\diamond(X,A):=C_c^0(X,A)+\sum_\mcO C_c(X,\mcO)\subseteq C_c(X,A),
\]
where the the sums are taken over all compact open subgroups $\mcO$ of $A$.

Let $S$ be set of places above $p$ and let $U^S\subset T(\A_F^{\iy\cup S})$ be an open compact subgroup.
Given a locally profinite ring $R$, let 
\[
\vartheta:T(\A_F)/U^ST(F_\iy)_+T(F)\longrightarrow R^\ti,
\]
be a continuous homomorphism. Let $I\subset R$ be a closed ideal such that 
\[
\bar\vartheta:T(\A_F)/U^ST(F_\iy)_+T(F)\stackrel{\vartheta}{\longrightarrow} R^\ti\longrightarrow \bar R^\ti,\qquad \bar R:=R/I,
\]
is locally constant. Assume also that the image of $\varepsilon_S:=\bar\vartheta\mid_{T(F_S)}:T(F_S)\rightarrow\bar R$ lies in $R$. 
Then, $\bar\vartheta^S:=\bar\vartheta\mid_{T(\A_F^{\iy\cup S})}$ defines an element
\[
\bar\vartheta^S\in H^0(T(F)_+,C^0(T(\A_F^{\iy\cup S}),\bar R)(\varepsilon_S^{-1})).
\]
For any $\mfp\in S$, write $\vartheta_\mfp$ for the restriction of $\vartheta$ to $T(F_\mfp)$, and let us consider the continuous map
\[
d\vartheta_\mfp:T(F_\mfp)\longrightarrow I/I^2; \qquad d\vartheta_\mfp(t_\mfp)=\vartheta_\mfp(t_\mfp)-\varepsilon_\mfp(t_\mfp)\;\mod\; I^2, 
\]
where $\varepsilon_\mfp:=\varepsilon_S\mid_{T(F_\mfp)}$.
If we write $\bar I:=I/I^2$, it is easy to check that $d\vartheta_\mfp$ defines an element
\[
d\vartheta_\mfp\in H^0(T(F_\mfp),\bar C^\diamond(T(F_\mfp),\bar I)(\varepsilon_\mfp)),\qquad \bar C^\diamond(T(F_\mfp),\bar I)(\varepsilon_\mfp):= C^\diamond(T(F_\mfp),\bar I)(\varepsilon_\mfp)/\ipa{\bar I\varepsilon_\mfp},
\]
where $\bar I\varepsilon_\mfp$ is the $\bar I$-module generated by the function $\varepsilon_\mfp$. Indeed, for any $\gamma\in T(F_\mfp)$
\begin{eqnarray*}
\gamma d\vartheta_\mfp(t_\mfp)&=&\varepsilon_\mfp(\gamma)\ipa{\vartheta_\mfp(\gamma^{-1}t_\mfp)-\varepsilon_\mfp(\gamma^{-1}t_\mfp)}=d\vartheta_\mfp(t_\mfp)+\vartheta_\mfp(t_\mfp)\ipa{\varepsilon_\mfp(\gamma)\vartheta_\mfp(\gamma)^{-1}-1}\\
&\equiv&d\vartheta_\mfp(t_\mfp)+\varepsilon_\mfp(t_\mfp)\ipa{\varepsilon_\mfp(\gamma)\vartheta_\mfp(\gamma)^{-1}-1}\mod I^2.
\end{eqnarray*}
Thus, $\gamma d\vartheta_\mfp\equiv d\vartheta_\mfp$ modulo $\bar I\varepsilon_\mfp$.

If $\mfp\in S$ does not split, we set $c_{\vartheta_\mfp}:={\rm res}_{T(F)_+}d\vartheta_\mfp\in H^0(T(F)_+,\bar C_c^\diamond(T(F_\mfp),\bar I)(\varepsilon_\mfp))$. In case $\mfp$ splits, 
we consider the canonical connection morphism introduced in \cite[Remark 3.3]{DasSp}:
\begin{equation}\label{conmorFfdiam}
H^0(F_\mfp^\ti,\bar C^\diamond(F_\mfp^\ti,\bar I)(\varepsilon_\mfp))\longrightarrow H^1(F_\mfp^\ti, C_c^\diamond(F_\mfp,\bar I)(\varepsilon_\mfp)).
\end{equation}
and we write $c_{\vartheta_\mfp}\in H^1(T(F)_+, C_c^\diamond(F_\mfp,\bar I)(\varepsilon_\mfp))$ for the restriction to $T(F)_+$ of the image of $d\vartheta_\mfp$ under such connection morphism.

We can regard $\vartheta$ as an element of $H^0(T(F),C(T(\A_F),R))$. 
Let us consider the natural cap-product with the fundamental class
$\vartheta\cap \eta\in H_u(T(F),C_c(T(\A_F),R))$,
where $C_c(T(\A_F),R)$ is the subspace of functions compactly supported when restricted to $T(\A_F^\infty)$. Since $\eta$ is also $T(F_\infty)_+$-invariant, we have in fact
\[
\vartheta\cap \eta\in H_u(T(F),C_c(T(\A_F),R)^{T(F_\infty)_+})=\bigoplus_{\bar\lambda:T(F)/T(F)_+\rightarrow\pm 1}H_u(T(F)_+,C_c(T(\A_F^{\infty}),R)).
\]
Moreover, it lies in the component corresponding to $\lambda:=\vartheta\mid_{T(F_\infty)}$. We denote by $\left(\vartheta\cap\eta\right)_\lambda\in H_u(T(F)_+,C_c(T(\A_F^{\infty}),R))$ the imatge at such component. Since it is $U^S$-invariant, it is easy to check that in fact
\[
\left(\vartheta\cap\eta\right)_\lambda\in H_u(T(F)_+,C_c^\diamond(T(\A_F^{\infty}),R)),\qquad C_c^\diamond(T(\A_F^{\infty}),R):=C^0_c(T(\A_F^{\iy\cup S}),R)\otimes_{R}\bigotimes_{\mfp\in S}  C_c^\diamond(T(F_\mfp),R)\subseteq C_c(T(\A_F^{\infty}),R).
\]

Similarly, if we write $S^1:=\{\mfp\in S,\;T\mbox{ splits in }\mfp\}$, $S^2=\{\mfq\in S,\;T\mbox{ non-split in }\mfq\}$, the cap-product:
\[
(\bar\vartheta^S\cap\eta^{S})\cap\ipa{\bigcup_{\mfp\in S}c_{\vartheta_\mfp}}\in H_u\ipa{T(F)_+,C^0_c(T(\A_F^{\iy\cup S}),\bar R)\otimes_{\bar R}\bigotimes_{\mfp\in S^1} C_c^\diamond(F_\mfp,\bar I)\oti_{\bar R}\bigotimes_{\mfp\in S^2}\bar C_c^\diamond(T(F_\mfp),\bar I)}.
\]
Since we have natural morphisms 
\begin{equation}\label{quot+extesc1}
C_c^\diamond(T(F_\mfp),R)\longrightarrow C_c^\diamond(F_\mfp,R),\quad\mfp\in S^1;\qquad  C_c^\diamond(T(F_\mfp),R)\longrightarrow \bar C_c^\diamond(T(F_\mfp),R):=C_c^\diamond(T(F_\mfp),R)/R,\quad\mfp\in S^2,
\end{equation}
given by extension by zero and quotient map, we have 
\begin{equation}\label{quot+extesc2}
\overline{(\cdot)}:C_c^\diamond(T(\A_F^{\infty}),R)
\longrightarrow C^0_c(T(\A_F^{\iy\cup S}),R)\otimes_{R}\bigotimes_{\mfp\in S^1} C_c^\diamond(F_\mfp,R)\oti_{R}\bigotimes_{\mfp\in S^2}\bar C_c^\diamond(T(F_\mfp),R)=:\bar C_c^\diamond(T(\A_F^{\infty}),R).
\end{equation}
If $r=\#S$ and we assume that $I^{r+1}=0$, we can consider the multilinear map given by multiplication
\[
m:\bar R\otimes_{\bar R}\bigotimes_{\mfp\in S}I/I^2\longrightarrow I^r/I^{r+1}=I^r\subseteq R,
\]
thus $m\ipa{(\bar\vartheta^S\cap\eta^{S})\cap\ipa{\bigcup_{\mfp\in S}c_{\vartheta_\mfp}}}$ can be seen as an element of $H_u\ipa{T(F)_+,\bar C_c^\diamond(T(\A_F^\infty),R)}$. The following result is analogous to that of \cite[Proposition 3.6]{DasSp}:
\begin{proposition}\label{propderchar}
Let $r=\# S$ and assume that $I^{r+1}=0$.
Then we have that 
\[
\#\ipa{\mcO_+^S/\mcO_+}_{\rm tor}\cdot\overline{\left(\vartheta\cap\eta\right)_\lambda}=m\ipa{(\bar\vartheta^S\cap\eta^{S})\cap\ipa{\bigcup_{\mfp\in S}c_{\vartheta_\mfp}}}\in H_u\ipa{T(F)_+,\bar C_c^\diamond(T(\A_F^\infty),R)}.
\]
\end{proposition}
\begin{proof}
Analogously to \cite[Proposition 3.6]{DasSp}, the result follows from the following commutative diagram:
\[
\xymatrix{
H^0(T(F)_+,\mcC_1)\ar[d]^{1}\ar[r]^{3}&H^0(T(F)_+,\mcC_4)\ar[d]^{5}&H^0(T(F)_+,\mcC_7)\ar[l]_{7}\ar[r]^{12}\ar[d]^{10}&H^0(T(F)_+,\mcC_{10})\ar[d]^{14}\\
H^{r}(T(F)_+,\mcC_2)\ar[r]^{4}\ar[d]^{2}&H^{r}(T(F)_+,\mcC_5)\ar[d]^{6}&H^{r}(T(F)_+,\mcC_8)\ar[l]_{8}\ar[r]^{13}\ar[d]^{11}&H^{r}(T(F)_+,\mcC_{11})\ar[d]^{15}\\
H_{u}(T(F_+),\mcC_3)\ar[r]^{\eqref{quot+extesc2}}&H_{u}(T(F_+),\mcC_6)&H_{u}(T(F_+),\mcC_9)\ar[l]_{9}\ar[r]^{=}&H_{u}(T(F_+),\mcC_{9})
}
\]
where the spaces $\mcC_i$ are 
\[
\begin{array}{ccc}
\mcC_1=\ipa{\bigotimes_{\mfp\in S}C^\diamond(T(F_\mfp),R)}\oti \mcC^S(R),&\mcC_2=\ipa{\bigotimes_{\mfp\in S}C_c^\diamond(T(F_\mfp),R)}\oti \mcC^S(R),& \mcC_3=C_c^\diamond(T(\A_F^{\iy}),R),\\
\mcC_4=\ipa{\bigotimes_{\mfp\in S}\bar C^\diamond(T(F_\mfp),R)}\oti \mcC^S(R),&\mcC_5=\ipa{\bigotimes_{\mfp\in S}\tilde C_c^\diamond(T(F_\mfp),R)}\oti \mcC^S(R),& \mcC_6=\bar C_c^\diamond(T(\A_F^{\iy}),R),\\
\mcC_7=\ipa{\bigotimes_{\mfp\in S}\bar C^\diamond(T(F_\mfp),I)}\oti \mcC^S(R),&\mcC_8=\ipa{\bigotimes_{\mfp\in S}\tilde C_c^\diamond(T(F_\mfp),I)}\oti \mcC^S(R),& \mcC_9=\bar C_c^\diamond(T(\A_F^{\iy}),I^r),\\
\mcC_{10}=\ipa{\bigotimes_{\mfp\in S}\bar C^\diamond(T(F_\mfp),\bar I)}\oti \mcC^S(\bar R),&\mcC_{11}=\ipa{\bigotimes_{\mfp\in S}\tilde C_c^\diamond(T(F_\mfp),\bar I)}\oti \mcC^S(\bar R),& 
\end{array}
\]
with $\mcC^S(\cdot)=C^0(T(\A_F^{\iy\cup S}),\cdot)$, $\mcC_c^S(\cdot)=C^0_c(T(\A_F^{\iy\cup S}),\cdot)$ and 
$\tilde C_c^\diamond(T(F_\mfp),\cdot)=\left\{\begin{array}{ll}
C_c^\diamond(F_\mfp,\cdot),&\mbox{ if }\mfp\in S^1,\\
\bar C_c^\diamond(T(F_\mfp),\cdot),&\mbox{ if }\mfp\in S^2;\end{array}\right.$. Morphisms 7, 8, 9 are induced by natural inclusions $I\subset R$, morphisms 12, 13 by the natural projections $I\to \bar I$, $R\to\bar R$, and morphisms 3, 4 by \eqref{quot+extesc1}. On the other hand, morphism 1 is provided by the cup-product $x\mapsto x\cup\bigcup_{\mfp\in S}{\rm res}_{T(F_\mfp)}^{T(F)_+}z_\mfp$, where $z_\mfp$ are the classes of Lemma \ref{lemma.onedotseven}, and morphisms 5, 10 and 11 are induced by connections morphisms \eqref{conmorFfdiam}. Finally, morphisms 2, 6, 11 and 15 are given by cap-products with $\eta^S$. Commutativity of the diagram follows from \cite[Lemma 3.4]{DasSp}.

On the one side, we can view $\vartheta$ as the element $\bigotimes_{\mfp\in S}\vartheta_\mfp\oti\vartheta^S$ of $H^0(T(F)_+,\mcC_1)$, $\vartheta^S=\vartheta\mid_{T(\A_F^{\iy\cup S})}$, and its image through the composition of 1, 2 and \eqref{quot+extesc2} is $\#\ipa{\mcO_+^S/\mcO_+}_{\rm tor}\overline{\left(\vartheta\cap\eta\right)_\lambda}$ by Lemma \ref{lemma.onedotseven}. On the other side, if we consider $\phi_\mfp:=\vartheta_\mfp-\varepsilon_\mfp\in \bar C^\diamond(T(F_\mfp),I)$, we have that $\tilde\vartheta:=\bigotimes_{\mfp\in S}\phi_\mfp\oti\vartheta^S\in \mcC_7$ is $T(F)_+$-invariant. Indeed, for any $x,t\in T(F_\mfp)$,
\[
(t\ast \phi_\mfp)(x)=\phi_\mfp(t\inv x)=\vartheta_\mfp(t\inv x)-\varepsilon_\mfp(t\inv x)=\vartheta_{\mfp}(t)\inv\phi_\mfp(x)+\ipa{\vartheta_{\mfp}(t\inv)-\varepsilon_\mfp(t\inv)}\varepsilon_\mfp(x)\equiv \vartheta_{\mfp}(t)\inv\phi_\mfp(x)\mod I\varepsilon_\mfp.
\]
Hence, for all $\gamma\in T(F)_+$,
\[
\gamma\ast \tilde\vartheta=\bigotimes_{\mfp\in S}\gamma\ast \phi_\mfp\oti \gamma\ast \vartheta^S=\vartheta^S(\gamma)\inv\prod_{\mfp\in S}\vartheta_\mfp(\gamma)\inv \ipa{\phi_\mfp\oti \vartheta^S}=\vartheta(\gamma)^{-1}\cdot \tilde\vartheta=\tilde\vartheta.
\]
Moreover, it is clear that the image of $\tilde \vartheta$ through 7 coincides with the image of $\vartheta$ through 3. Thus, the result follows from the commutativity of the diagram, \cite[Lemma 3.4]{DasSp}, and the fact that $d\vartheta_\mfp$ is the projection of $\phi_\mfp$.
\end{proof}

\subsection{Exceptional zero formulas}

Let $S$ be a set of primes above $p$ and let us fix an isomorphism  $G(F_S)\simeq \PGL_2(F_S)$. Assume that, for any $\mfp\in S$, the representation $V_\mfp$ is ordinary and either spherical or (twisted) Steinberg.
Let $\mu_{\phi_\lambda^S}$ be the measure of $\mcG_T$ constructed in \S \ref{antipLfunct}. We can decompose $S=S_+\cup S_-$, where 
\[
S_+=\{\mfp\in S;\;V_\mfp^{\Z_p}=\tno{St}_{\Z_p}(F_\mfp)(\varepsilon_\mfp)\}.
\] 
Let ${\rm Meas}(\mcG_T,\C_p)$ be the space of measures of $\mcG_T$ endowed with the group law given by convolution (see \cite[\S 2]{blanco2015anticyclotomic}). For any finite character $\xi\in C^0(\mcG_T,\C_p)$, write
\[
I_\xi=\ker(\varphi_\xi);\qquad \varphi_\xi(\mu)=\int_{\mcG_T}\xi d\mu.
\]
If we write $r:=\#S_+$ then remark \ref{remark:excepzeros} makes us think that $\mu_{\phi_\lambda^S}\in I_\xi^{r}$, for all $\xi\in C^0(\mcG_T,\bar\Z)^{\varepsilon_{S_+}}$ with $\xi\mid_{T(F_\iy)}=\lambda$. Moreover, Mazur-Tate interpretation realizes the image of $\mu_{\phi_\lambda^S}$ in $I_\xi^{r}/I_\xi^{r+1}$ as the $r$th derivative of the corresponding $p$-adic L-function. Hence, following the philosophy of the $p$-adic BSD conjecture, such image must be related to the extended Mordell-Weil group of $E$. 
In \cite[Theorem A]{fornea2021plectic}, Fornea and Gehrmann prove a similar result with plectic points, when $S=S_+$ and $T$ is inert at any $\mfp\in S$. Our aim in this section is to establish the general result for arbitrary $S$, $V_S$ and $T$.

Write $S_+=S_+^1\cup S_+^2$, where 
\[
S_+^1:=\{\mfp\in S_+,\;T\mbox{ splits in }\mfp\},\qquad S_+^2=\{\mfq\in S_+,\;T\mbox{ does not split in }\mfq\}.
\]
Notice that there exists a $c_\xi$-admissible Eichler order $\mcO_{N,\mfq}$ for any $\mfq\in S_-$. 
Let us choose a non-zero $\mcO_{N,\mfq}^\times$-invariant $x_\mfq\in V_\mfq$. 
For any $\mfp\in S_+^1$, we choose $x_\mfp\in V_\mfp$ to be the image of ${\rm vol}(\mfo_{F_\mfp}^\ti)^{-1}\indi_{\mfo_{F,\mfp}}\in C^0(\PP^1(F_\mfp),\Q)$, once we identify $T(F_\mfp)\simeq F_\mfp^\ti$ as a subset of $\PP^1(F_\mfp)$ by means of $t\mapsto \imath(t)\ast\iy$. Thus, we can write 
\[
x^{S_+^2}=x_{S_-}\oti x_{S_+^1}=\bigotimes_{\mfq\in S_-}x_\mfq\oti \bigotimes_{\mfp\in S_+^1}x_\mfp\in V_{S_-}\oti V_{S_+^1},\qquad V_{S_-}=\prod_{\mfq\in S_-}V_\mfq,\quad V_{S_+^1}=\prod_{\mfp\in S_+^1}V_\mfp.
\]
Notice that, for any $v^{S_+^2}\in V_{S_-}\oti V_{S_+^1}$, we have a $G(F)_+$-equivariant evaluation map analogous to \eqref{eqevaluationmap1}
\begin{equation}\label{eqevaluationmap}
    \cdot (v^{S_+^2}):\mcA^{S\cup\iy}\ipa{V_S,\Q}\longrightarrow \mcA^{S_+^2\cup\iy}\ipa{\bigotimes_{\mfp\in S_+^2}V_\mfp,\Q}.
\end{equation}
As explained in \S \ref{section:autcla}, a multiple of $\phi_\lambda^S(x^{S_+^2})$ actually lies in $H_\ast^u\ipa{G(F)_+,\mcA^{S_+^2\cup\iy}\ipa{\bigotimes_{\mfp\in S_+^2}\tno{St}_{\Z_p}(F_\mfp)(\varepsilon_\mfp),\Z_p}}^\lambda$, hence, the construction of \S \ref{section:fornea-gehrmann} provides a plectic point
 $P_\xi^{S_+^2}\in \hat E(K_S)_{\varepsilon_{S_+^2}}\oti_\Q\ovl\Q$ associated with $\phi_\lambda^S(x^{S_+^2})$ and $\xi$. 
 Notice that, for $\mfp\in S_+^2$, there exists an automorphism $\sigma_\mfp\in \Gal(H_\mfp/F_\mfp)$ that coincides on $E(K_\mfp)_{\varepsilon_\mfp}\simeq K_\mfp^\ti/q_{E_\mfp}^\Z$ with the non-trivial automorphism of $K_\mfp$. Hence, the following \emph{plectic invariant} can be seen as an element of $\hat T(F_{S_+^2}):=\bigotimes_{\mfp\in S_+^2}\widehat{T(F_\mfp)}$ 
\[
Q_\xi^{S_+^2}:=\ipa{\prod_{\mfp\in S_+^2}(\sigma_\mfp-1)}P_\xi^{S_+^2}\in \hat T(F_{S_+^2}),
\]
where again $\widehat{(\cdot)}$ stands for the $p$-adic completion of the non-torsion part.
In case $S_+^2=\emptyset$, we will assume that, with the notation of theorem \ref{THMintprop}, 
\[
Q_\xi^{\emptyset}:=\ipa{\frac{\varepsilon({\Sigma^D})2^{\#\Sigma_D}L_{c_\xi}(1,\psi_{K})^2h^2
}{\rho^\ast\xi^{-1}(k_{0}^{S_+})|c_\xi^2 D|^{\frac{1}{2}}C}\cdot L^{\Sigma^{S_+}}(1/2,\Pi,\chi)\cdot\frac{\langle \Phi_{\alpha_{S_+}},\Phi_{\alpha_{S_+}}\rangle}{\langle \Psi,\Psi\rangle}\cdot\frac{{\rm vol}(U_0(N))}{{\rm vol}(U_N)}\prod_{\mfp\in S_+}\frac{\varepsilon_\mfp\zeta_\mfp(1)^3}{\zeta_\mfp(2)}}^{\frac{1}{2}}
\] 
being $\Sigma^D=\{v\mid N:\;v\nmid c_\xi D\}$, $\Sigma_D=\{v\mid(N,D);\;v\nmid c_\xi\}$ and $\Phi_{\alpha_{S_+}}\in M_{\udl 2}(U_{NS_+})$ is such that ${\rm ES}_\lambda(\Phi_{\alpha_{S_+}})=\phi_\lambda^{S}(x^\emptyset)$.

We introduce the following well-defined morphism 
\begin{equation}\label{varphiGI}
\begin{tikzcd}
\varphi: &[-3em]\mcG_T \ar[r] & I_\xi/I_\xi^2; & \int_{\mcG_T}f d\varphi(\sigma) =\xi(\sigma)\inv\cdot f(\sigma)-f(1), & \sigma\in \mcG_T.
\end{tikzcd}
\end{equation}
Moreover, the Artin map induces a morphism
$\tno{rec}_\mfp: T(F_\mfp)\rightarrow \mcG_T$.
The product of morphisms $\varphi\circ\tno{rec}_\mfp$ provides
\begin{equation}
\begin{tikzcd}
\varphi\circ\tno{rec}_{S_+}: &[-3em]\hat T(F_{S_+})\oti_\Q\bar\Q \ar[r] & I^{r}_\xi/I_\xi^{r+1}.
\end{tikzcd}
\end{equation}

For every $\mfq\in S_-$ we have $V_\mfq=\Ind_P^G\chi_\mfq$, with $\chi_\mfq$ unramified. Let $J_{\mfq}$ be the unique non-trivial element in $P\subset G(F_\mfq)$ that also lies in the normalizer of $T(F_\mfp)$. Moreover, we consider the $\mfq$-stabilized elements $f_{0,\mfq}$ of \eqref{deff0}. As discussed in \S \ref{section:IntProp}, there exists $k_{0,\mfq}\in T(F_\mfq)$ such that $k_{0,\mfq}^{-1}J_\mfq\in\mcO_{N,\mfq}$. We recall the constants $C(\pi_\mfp)$ and $C(\pi_\mfp,\xi_\mfp)$ from theorem \ref{THMintprop} and define
\[
0\neq \epsilon_{S_-}(\pi,\xi)=\prod_{\mfq\in S_-}\frac{\langle f_{0,\mfq},f_{0,\mfq}\rangle_\mfp}{\langle v_{0,\mfp},v_{0,\mfp}\rangle_\mfp}\frac{\xi_\mfq^{-1}(k_{0,\mfq})}{{\rm vol}(\mfo_{K_\mfq}^\ti/\mfo_{F_\mfq}^\ti)^2}\frac{C(\pi_\mfq,\xi_\mfq)C(\pi_\mfq)}{L(1/2,\Pi_\mfq,\xi_\mfq)}\cdot\left\{\begin{array}{ll}
        1,&n_{\xi_\mfq}= 0;\\
        L(1,\psi_{K_\mfp})^{-2},&n_{\xi_\mfq}> 0.
    \end{array}\right.
\] 
\begin{theorem}\label{mainTHM2}
Let $\xi\in C^0(\mcG_T,\bar\Z)^{\varepsilon_{S_+}}$ be a finite character of conductor $c$ outside $S$ such that $\xi\mid_{T(F_\iy)}=\lambda$.
Then we have that $\mu_{\phi^S_\lambda}\in I_\xi^r$ and
the image of $\mu_{\phi^S_\lambda}$ in $I_\xi^r/I_\xi^{r+1}$ is given by
\[\mu_{\phi^S_\lambda}\equiv (-1)^s\cdot[\mcO_+^{S_+^2}:\mcO_+]^{-1}\cdot  \epsilon_{S_-}(\pi,\xi)^{\frac{1}{2}}\cdot\varphi\circ\tno{rec}_{S_+}\ipa{q_{S_+^1}\otimes Q_\xi^{S_+^2}}\mod I_\xi^{r+1},\]
where 
$q_{S_+^1}=\bigotimes_{\mfp\in S_+^1}q_{E_\mfp}\in \hat T(F_{S_+^1})$ is the product of Tate periods and $s=\#S_+^1$.
\end{theorem}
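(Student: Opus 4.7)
The overall strategy is to combine the argument of Theorem \ref{mainTHM} with an iterated application of the plectic multiplicative integral of \S \ref{section:fornea-gehrmann} and with the interpolation formula of Theorem \ref{THMintprop} used to absorb the places in $S_-$. The starting point is Lemma \ref{lemma.onedotseven}, which lets me rewrite $\eta$ as $\eta^{S_+}$ capped with the local classes $z_\mfp$ for $\mfp\in S_+$, up to the factor $\#(H^{S_+}_{\tno{tor}})$. Here $z_\mfp\in H^0$ for $\mfp\in S_+^2$ and $z_\mfp\in H^1$ for $\mfp\in S_+^1$, so each split place already contributes one power of the augmentation ideal via cap product; combined with the exceptional vanishing at each non-split place produced by the argument of Theorem \ref{mainTHM}, this will simultaneously verify $\mu_{\phi^S_\lambda}\in I_\chi^r$ and prepare the computation of its image in $I_\chi^r/I_\chi^{r+1}$.

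For the places in $S_-$, I would proceed as in the proof of Theorem \ref{THMintprop}: the normalization $\alpha(x_\mfq)=1$ plus Waldspurger's local formula identifies the contribution of $\mfq\in S_-$ to the cap product as the epsilon factor $\epsilon_\mfq(\pi_\mfq,\chi_\mfq)$, reducing the problem to computing the image in $I_\chi^r/I_\chi^{r+1}$ of a measure built from $\phi_\lambda^{S_+}$ capped with $\eta^{S_+}$ and the $z_\mfp$, multiplied by $\epsilon_{S_-}(\pi_{S_-},\chi_{S_-})$. For $\mfp\in S_+^2$, I would invoke the identity established in the proof of Theorem \ref{mainTHM}, namely $\ell\circ\tno{EV}\circ i_\mfp(\phi)(g^\mfp)=\delta_\mfp^\ast\phi(g^\mfp)(\ell)$. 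Iterating this identity tensor-by-tensor over $\mfp\in S_+^2$ and using the plectic class $\psi_\lambda^{S_+^2}$ from \S \ref{section:fornea-gehrmann} produces the factor $\varphi\circ\tno{rec}_{S_+^2}(Q_\chi^{S_+^2})$ with the torsion normalization $[\mcO_+^{S_+^2}:\mcO_+]^{-1}$ from the iterated Lemma \ref{lemma.onedotseven}.

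For the split places $\mfp\in S_+^1$, the cap product with $z_\mfp$ is computed through the connecting homomorphism of the extension $0\to C_c^0(T(F_\mfp),\Z)\to C_c^0(F_\mfp,\Z)\to\Z\to 0$. Tracing through Lemma \ref{lemma:mordelta}, this connecting map extracts the valuation of the multiplicative integral $\mint \frac{x-\bar\tau_\mfp}{x-\tau_\mfp}\,d\mu_{\phi}(x)$, which by the Tate uniformization attached to the twisted Steinberg $\tno{St}_\Z(F_\mfp)(\varepsilon_\mfp)$ lies in $q_\mfp^{\Z}\subset F_\mfp^\ti=T(F_\mfp)$; this matches the universal extension $\mcE_{F_\mfp^\ti}$ of Remark \ref{extSt} and yields the factor $\varphi\circ\tno{rec}_\mfp(q_\mfp)$ in the final formula. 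Collecting the contributions from $S_-$, $S_+^2$ and $S_+^1$, and tracking the action of $\pi_\mfp(J_\mfp)$ on $\delta_\mfp(\indi_{\mfo_{F,\mfp}})$ as in the proof of Theorem \ref{THMintprop} (which produces the sign $(-1)^s$ with $s=\#S_+^1$), I obtain the stated formula at the level of a representative in $I_\chi^r$. Finally, the argument at the end of the proof of Theorem \ref{mainTHM}, using the idempotent $\delta_\chi$ in a version adapted to $C_\chi(\mcG_T,\C_p)^{\varepsilon_{S_+}}$, shows that the difference of the two measures lies in $I_\chi^{r+1}$.

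The principal obstacle will be the clean identification at the split places of the cap product with the degree-one class $z_\mfp$ as the Tate period $q_\mfp$, in a way that interacts correctly with the plectic evaluation at $S_+^2$. This requires reconciling three pieces of data: the isomorphism $T(F_\mfp)\simeq F_\mfp^\ti$, the fact that at a split $\mfp$ the fixed points $\tau_\mfp,\bar\tau_\mfp$ lie in $F_\mfp$ rather than in $K_\mfp\setminus F_\mfp$, and the classification of the extension $\mcE_{F_\mfp^\ti}$ by $q_\mfp$. A secondary delicate point is verifying that the torsion index is exactly $[\mcO_+^{S_+^2}:\mcO_+]^{-1}$ (and not $[\mcO_+^{S_+}:\mcO_+]^{-1}$), which requires a refinement of Lemma \ref{lemma.onedotseven} comparing $\eta^{S_+}$ directly to $\eta^{S_+^2}$ capped with the degree-one classes $z_\mfp$, $\mfp\in S_+^1$; this is essentially a clean bookkeeping of the short exact sequence \eqref{diagram.longerexact} restricted to the split primes.
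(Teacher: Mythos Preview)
Your proposal is essentially correct and follows the same architecture as the paper's proof: reduce from $S$ to $S_+$ via the interpolation formula at $S_-$, use Lemma \ref{lemma.onedotseven} to pass between $\eta$, $\eta^{S_+}$ and $\eta^{S_+^2}$ via the classes $z_\mfp$, handle $S_+^2$ by the plectic analogue of the computation in Theorem \ref{mainTHM}, handle $S_+^1$ via the universal extension $\mcE_{F_\mfp^\ti}$ and its classification by $q_\mfp$, and conclude with the $\delta_\chi$ idempotent argument. You have also correctly isolated the two genuine technical points: the identification of the degree-one cap product at split places with $q_\mfp$ (which the paper carries out by writing down explicit cocycle representatives $(\phi_1,1)\in\mcE_\Z(F_\mfp)$ and $(\bar\phi_1,1)\in\mcE_{F_\mfp^\ti}$ and invoking \cite[Conjecture 3.8]{guitart2017automorphic}), and the bookkeeping that produces $[\mcO_+^{S_+^2}:\mcO_+]^{-1}$ rather than $[\mcO_+^{S_+}:\mcO_+]^{-1}$, which indeed comes from a second application of Lemma \ref{lemma.onedotseven} comparing $\eta^{S_+}$ to $\eta^{S_+^2}$.

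One small correction: the sign $(-1)^s$ does not come from the action of $\pi_\mfp(J_\mfp)$ on $\delta_\mfp(\indi_{\mfo_{F,\mfp}})$. In the paper it arises in the explicit cocycle computation for $\tno{res}_{T(F)}^{G(F)}c_\mfp$ at each split $\mfp$: the representative of $c_\mfp(t)$ involves $\bigl(\tfrac{x-\tau_\mfp}{x-\bar\tau_\mfp}\bigr)^{z_\mfp(t)(\cdot)}$, which is the \emph{inverse} of the ratio $\tfrac{x-\bar\tau_\mfp}{x-\tau_\mfp}$ appearing in $\delta_\mfp$ (Lemma \ref{lemma:mordelta}), so that after applying $\ell_\mfp$ each split place contributes a factor $-1$. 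This is a local combinatorial fact about the universal extension, not a Waldspurger-type sign.
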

\bpf

Assume that $S=S_+$. For any $\mfp\in S_+$, any group $M$ and any finite rank $\Z_p$-module $N$, we can consider the diagram  
\begin{center}
\begin{tikzcd}
  H^m_\ast(G(F)_+,\mcA^{S_+\cup\iy}(M\otimes\tno{St}_{\Z_p}(F_\mfp)(\varepsilon_\mfp),N))^\lambda \ar[d,"i_\mfp"]\ar[rd,"c^m_\mfp"]& \\
H^m_\ast(G(F)_+,\mcA^{S_+\cup\iy}(M\otimes_\Z\Delta_\mfp^0,\hat K_\mfp^\ti\otimes_{\Z_p}N)(\varepsilon_\mfp))^\lambda \ar[r] & H^{m+1}_\ast(G(F)_+,\mcA^{S_+\cup\iy}(M,\hat K_\mfp^\ti\otimes_{\Z_p}N)(\varepsilon_\mfp))^\lambda,\\[-2em]
\end{tikzcd}
\end{center}
where the vertical arrow arises from \eqref{definition.multiplicativeintegral2} and the horizontal arrow is the connection morphism of the degree long exact sequence. We write $i_{S_+}$ for the composition of $i_\mfp$, for all $\mfp\in S_+^2$, and $c_{\mfp_1}^{u}$, $c_{\mfp_2}^{u+1}$, $\cdots$, $c_{\mfp_s}^{u+s-1}$, where $S_+^1=\{\mfp_1,\cdots,\mfp_s\}$. Thus,
\[
i_{S_+}\phi_\lambda^{S_+}\in 
H^{u+s}_\ast(G(F)_+,\mcA^{S_+\cup\iy}(\Delta^0_{S_+^2},\hat K_{S_+^2}^\ti\otimes\hat F_{S_+^1}^\ti)(\varepsilon_{S_+}))^\lambda.
\]
Let us consider the $G(F)$-equivariant morphism
\[
\begin{tikzcd}
\tno{EV}:&[-3em] \mcA^{S_+\cup\iy}(\Delta_{S_+^2}^0,\hat K_{S_+^2}^\ti\otimes\hat F_{S_+^1}^\ti)(\lambda\cdot \varepsilon_{S_+}) \ar[r] & \tno{co}\Ind_{T(F)}^{G(F)}\ipa{\mcA^{S_+\cup\iy}(\hat K_{S_+^2}^\ti\otimes\hat F_{S_+^1}^\ti)(\lambda\cdot\varepsilon_{S_+})} \\[-2em]
& f \ar[r,mapsto] &  (h,g^{S_+})\mapsto \varepsilon_{S_+}(h)\cdot\lambda(h)\cdot f(h\inv g^{S_+})\ipa{\bigotimes_{\mfp\in S_+^2}\ipa{h\inv\bar\tau_\mfp-h\inv\tau_\mfp}}, \\[-2em]
\end{tikzcd}
\]
where $(h,g^{S_+})\in G(F)\ti G(\A^{S_+\cup\iy})$. Since we are evaluating at divisors of the form $h\inv\bar\tau_\mfp-h\inv\tau_\mfp$ it is easy to show that the image of $\phi_\lambda^{S_+}$ lies in $\hat T(F_{S_+})$. Thus, by Shapiro,
\[
\tno{EV}i_{S_+}\phi_\lambda^{S_+}\in H^{u+s}_\ast(T(F)_+,\mcA^{S_+\cup\iy}(\hat T(F_{S_+}))(\varepsilon_{S_+}))^\lambda.
\]
We claim that
\begin{equation}\label{claimTHM}
    q_{S_+^1}\otimes Q_\xi^{S_+^2}= \#\ipa{\mcO_+^{S_+}/\mcO_+^{S_+^2}}_{\tno{tor}}^{-1}\cdot(\eta^{S_+}\cap\xi_\lambda)\cap{\rm EV}i_{S_+}\phi_\lambda^{S_+}\in \hat T(F_{S_+})\otimes_\Z\ovl\Z.
\end{equation}
Indeed, by functoriality and the description of the multiplicative integral given in Remark \ref{extSt}, we have that 
\[{\rm EV}i_{S_+}\phi_\lambda^{S_+}={\rm EV}i_{S_+^2}\varphi_{\tno{unv}}\phi_\lambda^{S_+}\cup\bigcup_{\mfp\in S_+^1}\tno{res}_{T(F)_+}^{G(F)_+}c_\mfp,
\]
where $\varphi_{\tno{unv}}$ is defined in \eqref{morextSt}, $c_\mfp\in H^1(G(F),\tno{St}_{F_\mfp^\ti})$ is the class associated with the extension $\mcE_{F_\mfp^\ti}$ of \eqref{defextSt}, and 
\[
\varphi_{\tno{unv}}\phi_\lambda^{S^+}\in H^u_\ast(G(F)_+,\mcA^{S_+\cup\iy}(\tno{St}_{\Z_p}(F_{S_+^2})\otimes \tno{St}_{F_{S_+^1}^\ti},\hat F_{S_+^1}^\ti(\varepsilon_{S_+})))^\lambda,\qquad \tno{St}_{ F_{S_+^1}^\ti}:=\bigotimes_{\mfp\in S_+^1}\tno{St}_{F_{\mfp}^\ti};\quad \tno{St}_{\Z_p}(F_{S_+^2}):=\bigotimes_{\mfp\in S_+^2}\tno{St}_{\Z_p}(F_{\mfp}),
\]
is the corresponding push-forward. In fact, Oda's conjecture (\cite[Conjecture 3.8]{guitart2017automorphic}) can be interpreted as
\[
\varphi_{\tno{unv}}\phi_\lambda^{S_+}\cup \bigcup_{\mfp\in S_+^1}c_\mfp=q_{S_+^1}\otimes \ipa{\phi_\lambda^{S_+}\cup \bigcup_{\mfp\in S_+^1}c^{\tno{ord}}_\mfp},
\]
where $c_\mfp^{\tno{ord}}\in H^1(G(F),\tno{St}_{\Z_p}(F_\mfp))$ is the class associated with the following extension (see \cite[\S 6.1]{blanco2015anticyclotomic})
\[
\tno{St}_{\Z_p}(F_\mfp)\stackrel{f\mapsto (f(g^{-1}\ast\iy),0)}{\hookrightarrow}\mcE_{\Z_p}(F_\mfp):=\left\{(\phi,y)\in C(\GL_2(F_\mfp),{\Z_p})\ti{\Z_p}:\phi\left(\bbm s & x \\  & t \ebm g\right)=y\cdot v_\mfp(t)+\phi(g) \right\}/({\Z_p},0).
\]
For $\mfp\in S_+^1$, let $\tau_\mfp$ and $\bar\tau_\mfp$ be the points in $\PP^1(F_\mfp)$ fixed by $T(F_\mfp)$ as in \S \ref{deltap}. Thus, if we write  
\begin{equation}\label{phi1}
\phi_1\bbm a & b \\ c & d \ebm =v_\mfp(d+\bar\tau_\mfp\cdot c)-\ipa{ (v_\mfp+1)\cdot\indi_{\mfo_{F_\mfp}}}\ipa{\frac{d+\bar\tau_\mfp\cdot c}{d+\tau_\mfp\cdot c}},
\end{equation}
the element $(\phi_1,1)\in\mcE_{\Z_p}(F_\mfp)$ is a generator mapping to $1$ under the natural $G(F)$-morphism $\mcE_{\Z_p}(F_\mfp)\ra {\Z_p}$. Hence the cocycle $\tno{res}_{T(F)}^{G(F)}c^{\tno{ord}}_\mfp$ has representative
\[
c^{\tno{ord}}_\mfp(t)(x)=(1-t)\phi_1(x)=(1-t)\ipa{ (v_\mfp+1)\cdot\indi_{\mfo_{F_\mfp}}}\ipa{\frac{x-\bar\tau_\mfp}{x-\tau_\mfp}}=\ipa{\int_{T(F_\mfp)}\indi_{\mfo_{F_\mfp}}(\tau)\frac{\ipa{\tau \indi_{\mfo_{F_\mfp}}-t\tau\indi_{\mfo_{F_\mfp}}}}{{\rm vol}({\mfo_{F_\mfp}^\ti})}d^\ti\tau}\ipa{\frac{x-\bar\tau_\mfp}{x-\tau_\mfp}}, 
\]
for all $t\in T(F)$, since ${\rm vol}({\mfo_{F_\mfp}^\ti})^{-1}\int_{\mfo_{F_\mfp}}\tau\indi_{\mfo_{F_\mfp}} d^\ti\tau=(v_\mfp+1)\indi_{\mfo_{F_\mfp}}$. 
Using lemma \ref{lemma.onedotseven} and the cocycle $z_\mfp$ defined there,
\begin{eqnarray*}
(\eta^{S_+}\cap\xi_\lambda)\cap{\rm EV}i_{S_+}\phi_\lambda^{S_+}&=&q_{S_+^1}\otimes(\eta^{S_+}\cap\xi_\lambda)\cap\ipa{{\rm EV}i_{S_+^2}\phi_\lambda^{S_+}\cup\bigcup_{\mfp\in S_+^1}\tno{res}_{T(F)_+}^{G(F)_+}c^{\tno{ord}}_\mfp}\\
&=&q_{S_+^1}\otimes\ipa{\eta^{S_+}\cap\bigcup_{\mfp\in S_+^1}\tno{res}_{T(F)_+}^{T(F_\mfp)}z_\mfp\cap\xi_\lambda}\cap{\rm EV}i_{S_+^2}\phi_\lambda^{S_+}(x_{S_+^1})\\
&=&\#\ipa{\mcO_+^{S_+}/\mcO_+^{S_+^2}}_{\tno{tor}}\cdot q_{S_+^1}\otimes\ipa{\eta^{S_+^2}\cap\xi_\lambda}\cap{\rm EV}i_{S_+^2}\phi_\lambda^{S_+}(x_{S_+^1}),
\end{eqnarray*}
where 
the second from the fact that, for any $\Phi\in \mcA^{\{\mfp\}\cup S\cup\iy}(\tno{St}_{\Z_p}(F_\mfp),M)$,
\begin{eqnarray*}
\Phi(c^{\tno{ord}}_\mfp(t))&=&\int_{\mfo_{F_\mfp}}\frac{\Phi\ipa{\tau \indi_{\mfo_{F_\mfp}}-t\tau\indi_{\mfo_{F_\mfp}}}}{{\rm vol}({\mfo_{F_\mfp}^\ti})}d^\ti\tau=\int_{T(F_\mfp)}\frac{\ipa{\indi_{\mfo_{F_\mfp}}(\tau)-t\indi_{\mfo_{F_\mfp}}(\tau)}}{{\rm vol}({\mfo_{F_\mfp}^\ti})}\cdot\Phi\ipa{\tau \indi_{\mfo_{F_\mfp}}}d^\ti\tau\\
&=&\int_{T(F_\mfp)}z_\mfp(t)(\tau)\cdot\Phi\ipa{\frac{\indi_{\mfo_{F_\mfp}}}{{\rm vol}({\mfo_{F_\mfp}^\ti})}}(\tau)d^\ti\tau.
\end{eqnarray*}
Similarly as in the proof of theorem \ref{THMintprop}, if $S_+^2=\emptyset$, the computations of \S \ref{normtestvec} and \S \ref{disting} imply that $\ipa{\eta\cap\xi_\lambda}\cap\phi^{S_+}_\lambda(x_{S_+})=Q_\xi^{\emptyset}$,
proving the claim \eqref{claimTHM} in this case.
If $S_+^2\neq \emptyset$, due to the fact that ${\rm EV}$ is defined by means of the evaluation at the divisor $\bar\tau_\mfp-\tau_\mfp\in \Delta_\mfp^0$, we obtain that  $(\eta^{S_+^2}\cap\xi_\lambda)\cap{\rm EV}i_{S_+^2}\phi_\lambda^{S_+^2}=Q_\xi^{S_+^2}$ as well, and our claim \eqref{claimTHM} follows in any of the settings.

The space ${\rm Meas}(\mcG_T,\C_p)$ is the inductive limit of ${\rm Meas}(\mcG_T^{U^{S_+}},\C_p)$, where $U^{S_+}\subset T(\A_{F}^{S_+\cup\iy})$ runs over the set of open compact subgroups and $\mcG_T^{U^{S_+}}$ is the corresponding $U^{S_+}$-invariant Galois group. Thus, it is enough to prove the statement for the images $\mu_{\phi_\lambda^\mfp}^{U^{S_+}}$ of $\mu_{\phi_\lambda^{S_+}}$ in ${\rm Meas}(\mcG_T^{U^{S_+}},\C_p)$, for all $U^{S_+}$ small enough so that $\xi$ is $U^{S_+}$-invariant. 

Write $R:={\rm Meas}(\mcG_T^{U^{S_+}},\C_p)$, and
let us consider the character
\[
\vartheta: T(\A_F)/T(F_\infty)_+U^{S_+}T(F)\longrightarrow R/I_\xi^{r+1}, \qquad \int_{\mcG_T}f d\vartheta(t) = f(\rho(t))\quad{\rm mod}\; I_\xi^{r+1},
\]
where, by abuse of notation, we also denote by $I_\xi$ the image of $I_\xi$ in $R$. Notice that reduction modulo $I_\xi$ provides $\overline\vartheta=\rho^\ast\xi$ under the isomorphism $R/I_\xi\simeq \C_p$.

Write the morphism $\ell_{S_+}:=\varphi\circ\tno{rec}_{S_+}$. Hence, by Equation \eqref{claimTHM},
\[
    \ell_{S_+}\ipa{q_{S_+^1}\otimes Q_\xi^{S_+^2}}=\frac{1}{\#\ipa{\mcO_+^{S_+}/\mcO_+^{S_+^2}}_{\tno{tor}}}\cdot(\eta^{S_+}\cap\xi_\lambda)\cap\left(\ell_{S_+^2}{\rm EV}i_{S_+^2}\varphi_{\tno{unv}}\phi_\lambda^{S_+}\cup\bigcup_{\mfp\in S_+^1}\ell_\mfp\tno{res}_{T(F)_+}^{G(F)_+}c_\mfp,\right)\in I_\xi^r/I_\xi^{r+1},
\]
where $\ell_\mfp=\varphi\circ\tno{rec}_{\mfp}$.

On the one hand, if $\mfp\in S_+^1$ we define similarly as in \eqref{phi1}  
\begin{equation}\label{phi2}
\bar\phi_1\bbm a & b \\ c & d \ebm =(d+\bar\tau_\mfp\cdot c)\cdot\ipa{\frac{d+\tau_\mfp\cdot c}{d+\bar\tau_\mfp\cdot c} }^{\indi_{\mfo_{F,\mfp}}\ipa{\frac{d+\bar\tau_\mfp\cdot c}{d+\tau_\mfp\cdot c}}},
\end{equation}
and the element $(\bar\phi_1,1)\in\mcE_{F_\mfp^\ti}$ is a generator mapping to $1$ under the natural $G(F)$-morphism $\mcE_{F_\mfp^\ti}\ra \Z$. Hence the cocycle $\tno{res}_{T(F)_+}^{G(F)_+}c_\mfp$ has representative
\[
c_\mfp(t)(x)=(1-t)\bar\phi_1(x)=(1-t)\ipa{ \ipa{\frac{x-\tau_\mfp}{x-\bar\tau_\mfp}}^{\indi_{\mfo_{F,\mfp}}\ipa{\frac{x-\bar\tau_\mfp}{x-\tau_\mfp}}}}=t^{-\indi_{t\mfo_{F,\mfp}}\ipa{\frac{x-\bar\tau_\mfp}{x-\tau_\mfp}}}\cdot \ipa{\frac{x-\tau_\mfp}{x-\bar\tau_\mfp}}^{z_\mfp(t)\ipa{\frac{x-\bar\tau_\mfp}{x-\tau_\mfp}}}\in F_\mfp^\ti.
\]
If we apply $\ell_{\mfp}$, we obtain the cocycle
\[
\ell_\mfp c_\mfp(t)\ipa{\frac{x-\bar\tau_\mfp}{x-\tau_\mfp}}=-\ell_\mfp(t)\cdot\indi_{t\mfo_{F,\mfp}}\ipa{\frac{x-\bar\tau_\mfp}{x-\tau_\mfp}}- \ell_\mfp\ipa{\frac{x-\bar\tau_\mfp}{x-\tau_\mfp}}{z_\mfp(t)\ipa{\frac{x-\bar\tau_\mfp}{x-\tau_\mfp}}}.
\]
Thus, as one can see in \cite[\S 3.2]{DasSp}, we obtain that 
\begin{equation}\label{cpS+1}
\ell_\mfp\tno{res}_{T(F)_+}^{G(F)_+}c_\mfp=-\delta_\mfp c_{\vartheta_\mfp}(\varepsilon_\mfp)\in H^1(T(F)_+, C(\PP^1(F_\mfp),I_{\xi}/I_\xi^2)/(I_\xi/I_\xi^2)),
\end{equation}
where $\delta_\mfp:C_c^\diamond(F_\mfp,I_\xi/I_\xi^2)\rightarrow {\rm St}_{I_\xi/I_\xi^2}:=C(\PP^1(F_\mfp),I_{\xi}/I_\xi^2)/(I_\xi/I_\xi^2)$ is the morphism induced by that of lemma \ref{lemma:mordelta} and $(\varepsilon_\mfp)$ stands for the twist provided by $\varepsilon_\mfp$.

On the other hand, since $\ell_\mfp$ is a group homomorphism, by \eqref{definition.multiplicativeintegral2},
\begin{equation}\label{cpS+2}
\ell_{S_+^2}{\rm EV}i_{S_+^2}\varphi_{\tno{unv}}\phi=\lim_{\mcU_\mfp\in\tno{Cov}(\PP^1(F_\mfp))}\sum_{U_\mfp\in\mcU_\mfp} \ipa{\bigotimes_{\mfp\in S_+^2}\ell_\mfp\ipa{\frac{x_{U_\mfp}-\ovl\tau_\mfp}{x_{U_\mfp}-\tau_\mfp}}}\cdot{\phi\ipa{\bigotimes_{\mfp\in S_+^2}\indi_{U_\mfp}}}=\phi\left( \bigotimes_{\mfp\in S_+^2}\delta_\mfp c_{\vartheta_\mfp}(\varepsilon_\mfp)\right)
\end{equation}
for all $\phi\in\Hom\ipa{\bigotimes_{\mfp\in S_+^2}\tno{St}_{\Z_p}(F_\mfp),{\Z_p}}$, where \[
\delta_\mfp:\bar C^\diamond(T(F_\mfp),I_\xi/I_\xi^2)\longrightarrow C^\diamond(T(F_\mfp),I_\xi/I_\xi^2)/(I_\xi/I_\xi^2)\longrightarrow {\rm St}_{I_\xi/I_\xi^2}:=C(\PP^1(F_\mfp),I_{\xi}/I_\xi^2)/(I_\xi/I_\xi^2)
\]
is again the isomorphism induced by that of Lemma \ref{lemma:mordelta}. 

Using relations \eqref{cpS+1}, \eqref{cpS+2} and Proposition \ref{propderchar}, one obtains that
\[
\ell_{S_+}\ipa{q_{S_+^1}\otimes Q_\xi^{S_+^2}}=\frac{(-1)^{s}}{\#\ipa{\mcO_+^{S_+}/\mcO_+^{S_+^2}}_{\tno{tor}}}\cdot(\eta^{S_+}\cap\bar\vartheta_\lambda)\cap\left(\delta_{S_+}^\ast\phi_\lambda^{S_+}\cup\bigcup_{\mfp\in S_+}c_{\vartheta_\mfp}\right)=\frac{(-1)^{s}\#\ipa{\mcO_+^{S_+}/\mcO_+}_{\rm tor}}{\#\ipa{\mcO_+^{S_+}/\mcO_+^{S_+^2}}_{\tno{tor}}}\cdot \overline{\left(\vartheta\cap\eta\right)_\lambda}\cap \delta_{S_+}^\ast\phi_\lambda^{S_+}.
\]
By Equation \eqref{equation:capprod}, we have that $\overline{\left(\vartheta\cap\eta\right)_\lambda}\cap \delta_{S_+}^\ast\phi_\lambda^{S_+}=\left(\vartheta\cap\eta\right)\cap \delta_{S_+}^\ast\phi_\lambda^{S_+}$. Moreover, by definition  the image of $\mu_{\phi_\lambda^{S_+}}$ in $R/I_\chi^{r+1}$ coincides with $\left(\vartheta\cap\eta\right)\cap \delta_{S_+}^\ast\phi_\lambda^{S_+}$, hence the result follows for $S=S_+$.

Assume now that $S\neq S_+$, hence $S_-\neq\emptyset$. 
Recall that the image of $\mu_{\phi_\lambda^S}$ in $R/I_\xi^{r+1}$, where $R:={\rm Meas}(\mcG_T^{U^{S}},\C_p)$, coincides with
\[
\mu_{\phi_\lambda^S}\;({\rm mod}\; I_\xi^{r+1})=\overline{\left(\vartheta\cap\eta\right)_\lambda}\cap \delta_{S}^\ast\phi_\lambda^{S}=\#\ipa{\mcO_+^{S_+}/\mcO_+}_{\rm tor}^{-1}(\eta^{S_+}\cap\xi_\lambda)\cap\left(\delta_{S}^\ast\phi_\lambda^{S}\cup\bigcup_{\mfp\in S_+}c_{\vartheta_\mfp}\right),
\]
where the last cup-product is provided by $\langle\;,\;\rangle_+$ of \eqref{equation.pairingplus}, once we identify
\[
\eta^{S_+}\cap\xi_\lambda\in H_{u}(T(F)_+,C_c^0(T(\A_F^{S_+\cup\iy}),\ovl\Q)(\varepsilon_{S_+}))=H_{u}(T(F)_+,C_c^0(T(\A_F^{S\cup\iy}),C_c^0(T(F_{S_-}),\ovl\Q))(\varepsilon_{S_+})).
\]
Notice that, given $f^S\oti f_{S_-}\in C_c^0(T(\A_F^{S\cup\iy}),C_c^0(T(F_{S_-}),\ovl\Q))=C_c^0(T(\A_F^{S\cup\iy}),\ovl\Q)\oti C_c^0(T(F_{S_-}),\ovl\Q)$, an open compact subgroup $H\subseteq T(F_{S_-})$ such that $f_{S_-}$ is  $H$-invariant,  and $\phi\in \mcA^{S\cup\iy}(V_{S_-},I_{\xi}^r/I_{\xi}^{r+1})$, we have
\begin{eqnarray*}
\langle f^S\oti f_{S_-},\delta_{S_-}^\ast\phi\rangle_+&=&\int_{T(\A_{F}^{S\cup\iy})}f^s(t)\phi(t)\ipa{\sum_{x\in T(F_{S_-})/H}f_{S_-}(x)\delta_{S_-}\indi_{xH}}d^\times t\\
&=&\sum_{x\in T(F_{S_-})/H}f_{S_-}(x)\int_{T(\A_{F}^{S\cup\iy})}f^s(t)\phi(t)\ipa{x\delta_{S_-}\indi_{H}}d^\times t={\rm vol}(H)^{-1}\langle f^S\oti f_{S_-},\phi(\delta_{S_-}\indi_H) \rangle_+.
\end{eqnarray*}
where $\phi(\delta_{S_-}\indi_H)\in \mcA^{S\cup\iy}\ipa{I_{\xi}^r/I_{\xi}^{r+1}}$ is the image through the evaluation map \eqref{eqevaluationmap} corresponding to $\delta_{S_-}\indi_H$.
Thus, if we choose $H$ to be small enough, by the above computations 
\[
\mu_{\phi_\lambda^S}\;({\rm mod}\; I_\xi^{r+1})=\frac{\#\ipa{\mcO_+^{S_+}/\mcO_+}_{\rm tor}^{-1}}{{\rm vol(H)}}(\eta^{S_+}\cap\xi_\lambda)\cap\left(\delta_{S_+}^\ast\ipa{\phi_\lambda^{S}(\delta_{S_-}(\indi_H))}\cup\bigcup_{\mfp\in S_+}c_{\vartheta_\mfp}\right)=\frac{(-1)^s[\mcO_+^{S_+^2}:\mcO_+]^{-1}}{{\rm vol}(H)}\ell_{S_+}(q_{S_+^1}\oti Q_{\xi,H}^{S_+^2}),
\]
where $Q_{\xi,H}^{S_+^2}$ is the plectic invariant constructed as above by means of the evaluation $\phi_\lambda^{S}(\delta_{S_-}(\indi_H))$. In general, for any $v_{S_-}\in V_{S_-}$, we can consider the  plectic invariant $Q_{\xi,v_{S_-}}^{S_+^2}$ constructed by means of the evaluation $\phi_\lambda^{S}(v_{S_-})$.
Since $\phi^S_\lambda(tv_{S_-})=t\phi^S_\lambda(v_{S_-})$, for all $t\in T(F_{S_-})$, the morphism
\[
\psi:V_{S_-}\longrightarrow I_\xi^r/I_\xi^{r+1},\qquad v_{S_-}\longmapsto \ell_{S_+}\ipa{q_{S_+^1}\oti Q_{\xi,v_{S_-}}^{S_+^2})},
\]
satisfies $\psi(tv_{S_-})=\xi_{S_-}(t)\inv\cdot \psi(v_{S_-})$ for all $t\in T(F_{S_-})$, where $\xi_{S_-}=\rho^\ast\xi\mid_{T(F_{S_-})}$. By Saito-Tunnel (see \cite{Sa} and \cite{Tu}) the space 
$\Hom_{T(F_{S_-})}(V_{S_-}\oti\xi_{S_-},I_\xi^r/I_\xi^{r+1})$
is at most one dimensional. Similarly as in \eqref{localfactlambda}, we define
\begin{equation*}\label{deflambdamfq}
     \lambda_\mfp(f_1,f_2):=\int_{T(F_\mfp)}\xi_\mfp(t)\langle\pi_\mfp(t)f_1,\pi_\mfp(J_\mfp)f_2\rangle_\mfp d^\times t;
\end{equation*} 
and $\lambda_{S_-}:=\prod_{\mfq\in S_-}\lambda_\mfq\in \Hom_{T(F_{S_-})}(V_{S_-}\oti\xi_{S_-},\bar\Q)^{\oti 2}$.
By  Saito-Tunnel
\begin{eqnarray*}
    \psi(\delta_{S_-}(\indi_H))&=&\psi(x_{S_-})\cdot \lambda_{S_-}\ipa{\delta_{S_-}(\indi_H),x_{S_-}}\cdot \lambda_{S_-}(x_{S_-},x_{S_-})^{-1}\\
    \lambda_{S_-}(v_{S_-},\delta_{S_-}(\indi_H))&=&\lambda_{S_-}\ipa{x_{S_-},\delta_{S_-}(\indi_H)} \cdot \lambda^S\ipa{v_{S_-},x_{S_-}}\cdot\lambda_{S_-}(x_{S_-},x_{S_-})^{-1}.
\end{eqnarray*}
By the symmetry of $\lambda_\mfq$, we have $\lambda_{S_-}(\delta_{S_-}(\indi_H),x_{S_-})\lambda_{S_-}(x_{S_-},x_{S_-})^{-1}=\lambda_{S_-}(\delta_{S_-}(\indi_H),\delta_{S_-}(\indi_H))^{\frac{1}{2}}\lambda_{S_-}(x_{S_-},x_{S_-})^{-\frac{1
}{2}}$.
Hence, by \eqref{PhiPhiint} and \eqref{PhiPhiint2}:
\begin{equation*}
    \frac{\psi(\delta_{S_-}(\indi_H))}{{\rm vol}(H)\psi(x_{S_-})}=\frac{1}{{\rm vol}(H)}\frac{\lambda_{S_-}(\delta_{S_-}(\indi_H),\delta_{S_-}(\indi_H))^{\frac{1}{2}}}{\lambda_{S_-}(x_{S_-},x_{S_-})^{\frac{1
}{2}}}=\prod_{\mfq\in S_-}\frac{\langle \delta_\mfq(\indi_{H_\mfq}),\delta_\mfq(\indi_{H_\mfq})\rangle_\mfq^{\frac{1}{2}}}{\langle x_{\mfq},x_{\mfq}\rangle_\mfq^{\frac{1}{2}}}\frac{\lambda_\mfq(\delta_\mfq(\indi_{H_\mfq}),J_\mfq)^{\frac{1}{2}}}{{\rm vol}(H_\mfq)\cdot\lambda_\mfp(x_{\mfq},J_\mfq)^{\frac{1}{2}}}=\epsilon_{S_-}(\pi,\xi)^{\frac{1}{2}}.
\end{equation*}
The result then follows from the fact that $\psi(\delta_{S_-}(\indi_H))=\ell_{S_+}(q_{S_+^1}\oti Q_{\xi,H}^{S_+^2})$ and $\psi\ipa{x_{S_-}}=\ell_{S_+}(q_{S_+^1}\oti Q_{\xi}^{S_+^2})$.
\epf

\section{Appendix: Local integrals}\label{Appendix}

Let $T$ be the torus associated to the quadratic extension $K_\mfp/F_\mfp$, namely, for any $\mfo_{F_\mfp}$-algebra $R$, we have  $T(R)=(\mfo_{K_\mfp}\oti_{\mfo_{F_\mfp}}R)^\times/R^\times$. Let $\psi_{K_\mfp}$ be the quadratic character associated with $K_\mfp/F_\mfp$. Choose $\beta\in K_\mfp^\ti\setminus F_\mfp^\ti$ such that $\mfo_{K_\mfp}=\mfo_{F_\mfp}+\beta\mfo_{F_\mfp}$ and write 
\[
H_n:=\ipa{\mfo_{F_\mfp}+\beta\mfp^n}^\ti/\mfo_{F_\mfp}^\ti\subseteq T(F_\mfp).
\]
If $K_\mfp$ is non-split, write $\bar\beta$ for the conjugation of $\beta$. If $K_\mfp$ splits and $\beta=(b_1,b_2)$, we will eventually write $\beta=b_1$ and $\bar\beta=b_2$. 

\begin{lemma}\label{lemmalocint1}
Let $\xi:T(F_\mfp)\rightarrow\C^\times$ and $\chi:F_\mfp^\ti\rightarrow\C^\times$ be locally constant characters. Assume that $\chi\mid_{\mfo_{F_\mfp}^\ti}=1$ and let $n\geq{\rm cond}\xi+1$,  then we have
    \[
    \frac{\chi((\beta-\bar\beta)^{-2})}{{\rm vol}(H_0)}\int_{H_0 \setminus H_n}\chi\ipa{\frac{(t-1)^2}{t}}\xi(t)d^\ti t=
    \left\{\begin{array}{ll}
        \ipa{\frac{\chi(\varpi_\mfp)^{2n}q_\mfp^{-n}L(1,\psi_{K_\mfp})}{(\chi(\varpi_\mfp)^2q_\mfp^{-1}-1)\zeta_\mfp(1)}}+\ipa{1+\frac{(1-\chi(\varpi_\mfp)^{-2})L(1,\psi_{K_\mfp})}{\chi(\varpi_\mfp)^{-2}q_\mfp-1}},&n_\xi= 0;\\
        \ipa{\frac{\chi(\varpi_\mfp)^{2n}q_\mfp^{-n}L(1,\psi_{K_\mfp})}{(\chi(\varpi_\mfp)^2q_\mfp^{-1}-1)\zeta_\mfp(1)}}+\ipa{\frac{\chi(\varpi_\mfp)^{2n_\xi}q_\mfp^{-n_\xi}(1-\chi(\varpi_\mfp)^{-2})L(1,\psi_{K_\mfp})}{1-\chi(\varpi_\mfp)^2q_\mfp^{-1}}},&n_\xi> 0.
    \end{array}\right.
    \]
\end{lemma}
\begin{proof}
Notice that $H_0/H_n\simeq T(\mfo_{F_\mfp}/\mfp^n)$, hence  ${\rm vol}(H_n)=(\#T(\mfo_{F_\mfp}/\mfp^n))^{-1}{\rm vol}(H_0)=q_\mfp^{-n}L(1,\psi_{K_\mfp}){\rm vol}(H_0)$.
We have then by orthogonality
    \[
    I_0:=\int_{H_0}\xi(t)d^\ti t=\left\{\begin{array}{ll}
        {\rm vol}(H_0),&n_\xi= 0;\\
        0,&n_\xi> 0,
    \end{array}\right.,\qquad I_k:=\int_{H_k}\xi(t)d^\ti t=\left\{\begin{array}{ll}
        q_\mfp^{-k}L(1,\psi_{K_\mfp})\cdot {\rm vol}(H_0),&n_\xi\leq k;\\
        0,&n_\xi> k.
    \end{array}\right.
    \]
    For any class $t=[t_1+t_2\beta]\in T(F_\mfp)$, we have $\frac{(t-1)^2}{t}=(\beta-\bar\beta)^2\ipa{\frac{t_1}{t_2}+\beta}^{-1}\ipa{\frac{t_1}{t_2}+\bar\beta}^{-1}$, because we identify $t$ with $\frac{t_1+\beta t_2}{t_1+\bar\beta t_2}$. This implies that, for any $t\in H_k\setminus H_{k+1}$, we have $\frac{(t-1)^2}{t}\in (\beta-\bar\beta)^2\varpi_\mfp^{2k}\mfo_{F_\mfp}^\ti$. Thus, we compute
    \begin{eqnarray*}
    \int_{H_0 \setminus H_n}\chi\ipa{\frac{(t-1)^2}{t}}\xi(t)d^\ti t&=&\sum_{k=0}^{n-1}\int_{H_k \setminus H_{k+1}}\chi\ipa{\frac{(t-1)^2}{t}}\xi(t)d^\ti t=\chi((\beta-\bar\beta)^2)\ipa{\sum_{k=0}^{n-1}\chi(\varpi_\mfp)^{2k}\int_{H_k \setminus H_{k+1}}\xi(t)d^\ti t}\\
    &=&\chi((\beta-\bar\beta)^2)\ipa{\sum_{k=0}^{n-1}\chi(\varpi_\mfp)^{2k}\ipa{I_k-I_{k+1}}}\\
    &=&\chi((\beta-\bar\beta)^2)\ipa{I_0-\chi(\varpi_\mfp)^{2n-2}I_n+\sum_{k=1}^{n-1}\ipa{\chi(\varpi_\mfp)^{2k}-\chi(\varpi_\mfp)^{2k-2}}I_k}.
    \end{eqnarray*}
    and the result follows.
\end{proof}

\begin{lemma}\label{lemmalocint2}
Let $\xi:T(F_\mfp)\rightarrow\C^\times$ and $\chi:F_\mfp^\ti\rightarrow\C^\times$ be locally constant characters. Assume that $\chi\mid_{\mfo_{F_\mfp}^\ti}=1$ and, in the split case, $|\xi^{-1}\chi^{-1}(\varpi_\mfp)|,|\xi\chi^{-1}(\varpi_\mfp)|<1$.
    For $n$ big enough
    \[
    \frac{\chi((\beta-\bar\beta)^{-2})}{{\rm vol}(H_0)}\int_{T(F_\mfp)\setminus H_n}\chi\ipa{\frac{(t-1)^2}{t}}\xi(t)d^\times t=
    \left\{\begin{array}{ll}
        \ipa{\frac{\chi(\varpi_\mfp)^{2n}q_\mfp^{-n}L(1,\psi_{K_\mfp})}{(\chi(\varpi_\mfp)^2q_\mfp^{-1}-1)\zeta_\mfp(1)}}+\ipa{\frac{\varepsilon(\xi,\chi)\cdot L(1,\psi_{K_\mfp})}{1-\chi(\varpi_\mfp)^2q_\mfp^{-1}}},&n_\xi= 0;\\
        \ipa{\frac{\chi(\varpi_\mfp)^{2n}q_\mfp^{-n}L(1,\psi_{K_\mfp})}{(\chi(\varpi_\mfp)^2q_\mfp^{-1}-1)\zeta_\mfp(1)}}+\ipa{\frac{\chi(\varpi_\mfp)^{2n_\xi}q_\mfp^{-n_\xi}(1-\chi(\varpi_\mfp)^{-2})L(1,\psi_{K_\mfp})}{1-\chi(\varpi_\mfp)^2q_\mfp^{-1}}},&n_\xi> 0.
    \end{array}\right.
    \]
    where 
    \[
    \varepsilon(\xi,\chi):=\left\{\begin{array}{ll}
        \frac{\ipa{\chi(\varpi_\mfp)^2-1}\ipa{\chi(\varpi_\mfp)\xi^{-1}(\varpi_\mfp)q_\mfp^{-1}-1}\ipa{\chi(\varpi_\mfp)\xi(\varpi_\mfp)q_\mfp^{-1}-1}}{\ipa{\chi(\varpi_\mfp)\xi^{-1}(\varpi_\mfp)-1}\ipa{\chi(\varpi_\mfp)\xi(\varpi_\mfp)-1}},&T(F_\mfp)\mbox{ splits};\\
        (1-\chi(\varpi_\mfp)^2q_\mfp^{-2}),&T(F_\mfp)\mbox{ inert};\\
        (1-\xi(\varpi_\mfp^{\frac{1}{2}})\chi(\varpi_\mfp)q_\mfp^{-1})(1+\xi(\varpi_\mfp^{\frac{1}{2}})\chi(\varpi_\mfp)^{-1})&T(F_\mfp)\mbox{ ramified};
    \end{array}\right.
    \]
\end{lemma}
\begin{proof}
We have the following cases depending on the behavior of $T$:
    
    \emph{Assume that $T(F_\mfp)$ splits}: Notice that, using the change of variables $t\mapsto t^{-1}$, for any $n<0$
        \[
        \int_{\varpi_\mfp^k\mfo_{F_\mfp}^\ti}\chi\ipa{\frac{(t-1)^2}{t}}\xi(t)d^\times t=\int_{\varpi_\mfp^{-k}\mfo_{F_\mfp}^\ti}\chi\ipa{\frac{(t-1)^2}{t}}\xi(t)^{-1}d^\times t=\int_{\varpi_\mfp^{-k}\mfo_{F_\mfp}^\ti}\chi(t)^{-1}\xi(t)^{-1}d^\times t.
        \]
        Thus,  
        \begin{eqnarray*}
        \int_{T(F_\mfp)\setminus H_n}\chi\ipa{\frac{(t-1)^2}{t}}\xi(t)d^\times t&=&\sum_{k>0}\int_{\varpi_\mfp^k\mfo_{F_\mfp}^\ti}\chi^{-1}\xi(t) d^\times t+\sum_{k<0}\int_{\varpi_\mfp^{-k}\mfo_{F_\mfp}^\ti}\chi^{-1}\xi^{-1}(t) d^\times t+\int_{\mfo_{F_\mfp}^\ti\setminus H_n}\chi\ipa{1-t}^2\xi(t)d^\times t\\
        &=&\ipa{\sum_{k>0}\chi^{-1}\xi(\varpi_\mfp)^k+\sum_{k<0}\chi^{-1}\xi^{-1}(\varpi_\mfp)^{-k}}\int_{\mfo_{F_\mfp}^\ti}\xi(t) d^\times t+\int_{\mfo_{F_\mfp}^\ti\setminus H_n}\chi\ipa{1-t}^2\xi(t)d^\times t,\\
        \end{eqnarray*}
    and the result follows from lemma \ref{lemmalocint1}, since $\chi((\beta-\bar\beta)^2)=1$ in this setting.

    \emph{Assume that $T(F_\mfp)$ is inert}:  Notice that the image of $\beta\in\mfo_{K_\mfp}$ in $\mfo_{K_\mfp}/\mfp$ does not lie in $\mfo_{F_\mfp}/\mfp$. This implies that $(\beta-\bar\beta)\not\in\mfp$ and $\chi((\beta-\bar\beta)^{2})=1$. In this situation $T(F_\mfp)=H_0$, hence the result also follows from lemma \ref{lemmalocint1}.

     \emph{Assume that $T(F_\mfp)$ ramifies}: In this situation we can suppose that $\beta=\varpi_\mfp^{\frac{1}{2}}$ is a generator of the maximal ideal of $K_\mfp^\ti$. In this situation $T(F_\mfp)=H_0\cup\beta H_0$, hence we obtain
     \[
     \int_{T(F_\mfp)\setminus H_n}\chi\ipa{\frac{(t-1)^2}{t}}\xi(t)d^\times=\int_{\beta H_0}\chi\ipa{\frac{(t-1)^2}{t}}\xi(t)d^\times+\int_{H_0\setminus H_n}\chi\ipa{\frac{(t-1)^2}{t}}\xi(t)d^\times.
     \]
     Moreover, since any $t\in \beta H_0$ is of the form $t=[t_1+t_2\beta]$ where $t_1\in\mfp$ and $t_2\in\mfo_{F_\mfp}^\ti$, we have $\frac{(t-1)^2}{t}=\frac{(\beta-\bar\beta)^2}{\beta\bar\beta}\ipa{\frac{t_1}{\beta t_2}+1}^{-1}\ipa{\frac{t_1}{\bar\beta t_2}+1}^{-1}\in (\beta-\bar\beta)^2\varpi_\mfp^{-1}\mfo_{F_\mfp}^\ti$, hence
     \[
     \int_{\beta H_0}\chi\ipa{\frac{(t-1)^2}{t}}\xi(t)d^\times=\xi(\varpi_\mfp^{\frac{1}{2}})\cdot\chi((\beta-\bar\beta)^2)\cdot\chi(\varpi_\mfp)^{-1}\int_{H_0}\xi(t)d^\ti t,
     \]
     and the result again follows from lemma \ref{lemmalocint1} and the fact that $\xi(\varpi_\mfp^{\frac{1}{2}})=\pm1$. 
\end{proof}

\subsection{Local periods}\label{localint}

Write $\imath:T(F_\mfp)\hookrightarrow G(F_\mfp)$ for the fixed embedding.
Given the morphism $\delta_\mfp$ defined in \S \ref{deltap},
we aim to calculate the following integrals
\begin{equation}\label{pairingTH}
    \int_{T(F_\mfp)}\xi_\mfp(t)\langle \imath(t)\delta_\mfp(\indi_H),J\delta_\mfp(\indi_H)\rangle d^\ti t
\end{equation}
where $\xi_\mfp$ is a locally constant finite character, $H\subset T(\mfo_{F_\mfp})$ is an open and compact subgroup small enough so that $\xi_\mfp$ is $H$-invariant, $J\in G(F_\mfp)$ is so that $J\imath(t)=\imath(t^{-1})J$ for all $t\in T(F_\mfp)$, $d^\ti$ is a Haar measure of $T(F_\mfp)$, and $\langle\cdot,\cdot\rangle$ is the natural $G(F_\mfp)$-invariant $\C$-bilinear pairing on $V_\mfp$.

As in previous sections, we will assume that $V_\mfp$ is a unitary quotient of a principal series representation, thus, there exists a surjective $G(F_\mfp)$-invariant homomorphism $\Ind_P^G(\chi_\mfp)\rightarrow V_\mfp$, for some locally constant character $\chi_\mfp$. Thus,
if we write $J=b_J\cdot \imath(t_J)$ for some $t_J\in T(F_\mfp)$ and $b_J\in P$, then we obtain
\begin{equation}\label{jocambJ}
    J\delta_\mfp(f)(\imath(t))=\delta_\mfp(f)(\imath(t)\cdot J)=\delta_\mfp(f)(J\cdot\imath(t^{-1}))=\chi_\mfp\ipa{b_J}\cdot\delta_\mfp(f)(\imath(t_J\cdot t\inv))=\chi_\mfp\ipa{b_J}\cdot\delta_\mfp(f^*)(\imath(t)),
\end{equation}
where $f^*(t)=f(t\inv t_J^{-1})$ and $b_J^2\in F_\mfp^\ti$. Since $\indi_H^\ast=t_J^{-1}\indi_H$, we deduce that in case $\chi_\mfp$ unramified
\[
\int_{T(F_\mfp)}\xi_\mfp(t)\langle \imath(t)\delta_\mfp(\indi_H),J\delta_\mfp(\indi_H)\rangle d^\ti t=\xi_\mfp(t_J)^{-1}\int_{T(F_\mfp)}\xi_\mfp(t)\langle \imath(t)\delta_\mfp(\indi_H),\delta_\mfp(\indi_H)\rangle d^\ti t.
\]

We proceed to describe the bilinear pairing $\langle\cdot,\cdot\rangle$: On the one hand, for every $s\in \C$ we have the character
$\chi_{\mfp,s}=\chi_\mfp|\cdot|^s$ and the isomorphism
\[
\varphi_\mfp^s:\Ind_P^G(\chi_{\mfp})\stackrel{\simeq}{\longrightarrow}\Ind_P^G(\chi_{\mfp,s});\qquad \varphi_\mfp^s(f)(bk)=\chi_{\mfp,s}(b)f(k),\quad b\in P,\;k\in G(\mfo_{F_\mfp}).
\]
By \cite[Proposition 4.5.6]{Bump}, when ${\rm Re}(s)$ is big enough the $G(F_\mfp)$-equivariant integral 
\[
M_s:\Ind_P^G(\chi_{\mfp,s})\longrightarrow \Ind_P^G(|\cdot|\chi^{-1}_{\mfp,s}),\qquad M_sf(g)=\int_{F_\mfp}f(wn_xg)dx,\quad n_x=\bbm1&x\\&1\ebm ,\;w=\bbm&-1\\1&\ebm
\]
converges absolutely. Moreover by \cite[Proposition 4.5.7]{Bump}, for every $f\in \Ind_P^G(\chi_\mfp)$ the vector $M_s\varphi_\mfp^sf$ admits meromorphic continuation, having a possible pole at $s=a$ only if $\chi_\mfp^2=|\cdot|^{1-a}$. By abuse of notation, we write also $M_s\varphi_\mfp^sf$ for such meromorphic continuation.

On the other hand, by \cite[Proposition 4.5.5]{Bump} and \cite[Corollary 8.2]{blanco2015anticyclotomic} we have a $G(F_\mfp)$-invariant bilinear pairing 
\[
\Ind_P^G(\chi_{\mfp,s})\ti \Ind_P^G(|\cdot|\chi_{\mfp,s}^{-1}))\longrightarrow \C;\qquad ( f_1,f_2)\mapsto\int_{T(F_\mfp)}f_1(\imath(\tau))\cdot f_2(\imath(\tau))d^\ti \tau,
\]
admitting again meromorphic continuation.

With the above ingredients and assuming that $\chi_\mfp^2\neq|\cdot|$, we construct our $G(F_\mfp)$-invariant $\C$-bilinear pairing
\[
\langle f_1,f_2\rangle=(\varphi^s_{\mfp}f_1, M_s\varphi_\mfp^sf_2)_{s=0}.
\]
The corresponding pairing in $V_\mfp$ is derived from this one defined in $\Ind_P^G(\chi_\mfp)$. 
\begin{hypothesis}\label{restEF}
From now on we will restrict ourselves to the case $\chi_\mfp\mid_{\mcO_{F_\mfp}^\ti}$ trivial. This will provide a more explicit formula for our local integrals.
\end{hypothesis} 
\begin{proposition}\label{propoonM0}
If $H=H_n$ is small enough, we have that $\varphi_\mfp^s\delta_\mfp(\indi_H)(\imath( t))=\indi_H(t)$ and 
\[
M_s\varphi_\mfp^s\delta_\mfp(1_{H})(\imath(t))=
\frac{C_T^0\cdot {\rm vol}(H)}{\chi_{\mfp, s-1}(\tau_\mfp-\ovl\tau_\mfp)^2}\ipa{\chi_{\mfp, s-1}\ipa{\frac{(t-1)^2}{t}}\cdot\indi_{T(F_\mfp)\setminus H}(t)+C_\beta^s\cdot\ipa{1-\frac{1}{q_\mfp}}\cdot\frac{\alpha_\mfp^{-2n} q_\mfp^{-2n(s-1)}}{1-\alpha_\mfp^{-2} q_\mfp^{-2s+1}}\cdot\indi_{H}(t)},
\]
where  $C_T^0$ is a non-zero constant depending on $T(F_\mfp)$, $C_\beta^s=\chi_{\mfp, s-1}\ipa{\beta-\bar\beta}^2$ and $\alpha_\mfp=\chi_\mfp(\varpi_\mfp)\inv$.
\end{proposition}
\begin{proof}
Using the explicit description of $\delta_\mfp$ of Lemma \ref{lemma:mordelta} we deduce that, for $\bsm a&b\\c&d\esm\in\GL_2(\mfo_{F_\mfp})$,
\begin{eqnarray*}
\varphi_\mfp^s\delta_\mfp(\indi_H)\ipa{\bbm x_1&y\\&x_2\ebm\bbm a&b\\c&d\ebm}
&=&\chi_{\mfp,s}\ipa{\frac{x_1}{x_2}}\cdot\chi_{\mfp}\ipa{\frac{ad-bc}{(c\tau_\mfp+d)(c\ovl\tau_\mfp+d)}}\cdot \indi_H\ipa{\frac{c\bar\tau_\mfp+d}{c\tau_\mfp+d}}.
\end{eqnarray*}
Notice that, by the expression \eqref{eqiotat}, if 
\[
\imath(\tilde t)=\frac{\ovl\lambda_{\tilde t}}{\tau_\mfp-\ovl \tau_\mfp}\bbm t\tau_\mfp-\ovl \tau_\mfp & \tau_\mfp\ovl\tau_\mfp(1-t) \\ t-1  & \tau_\mfp-t\ovl\tau_\mfp \ebm=\bbm x_1&y\\&x_2\ebm\bbm a&b\\c&d\ebm,\qquad \bbm a&b\\c&d\ebm\in\GL_2(\mfo_{F_\mfp}),
\]
then $x_2c=\frac{\ovl\lambda_{\tilde t}(t-1)}{\tau_\mfp-\ovl \tau_\mfp}$ and $x_2d=\frac{\ovl\lambda_{\tilde t}(\tau_\mfp-t\ovl\tau_\mfp)}{\tau_\mfp-\ovl \tau_\mfp}$. Since either $c$ or $d$ lives in $\mfo_{F_\mfp}^\ti$ and $\lambda_{\tilde t}\ovl\lambda_{\tilde t}\in x_1x_2\mfo_{F_\mfp}^\ti$,
we have
\[
\varphi_\mfp^s\delta_\mfp(\indi_H)(\imath(\tilde t))=\left\{\begin{array}{ll}\chi_{\mfp,s}\left(\frac{t(\tau_\mfp-\ovl\tau_\mfp)^2}{(\tau_\mfp-t\ovl\tau_\mfp)^2}\right)\cdot\indi_H(t\inv),&v_\mfp\ipa{\frac{t-1}{\tau_\mfp-\ovl\tau_\mfp t}}\geq 0,\\
\chi_{\mfp,s}\left(\frac{t(\tau_\mfp-\ovl\tau_\mfp)^2}{(t-1)^2}\right)\cdot\indi_H(t\inv),&v_\mfp\ipa{\frac{t-1}{\tau_\mfp-\ovl\tau_\mfp t}}\leq 0.\end{array}\right.
\]
If $H$ is small enough open subgroup and $t\inv\in H$, then $t$ is close to 1 and $\ipa{\frac{t-1}{\tau_\mfp-\ovl\tau_\mfp t}}$ has big $\mfp$-adic valuation, moreover, 
$\left(\frac{t(\tau_\mfp-\ovl\tau_\mfp)^2}{(\tau_\mfp-t\ovl\tau_\mfp)^2}\right)\in\mfo_{F_\mfp}^\times$. We conclude that $\varphi_\mfp^s\delta_\mfp(\indi_H)(\imath( t))=\indi_H(t\inv)$.

On the other hand, by definition,
\[
M_s\varphi_\mfp^s\delta_\mfp(\indi_H)(g)=\int_{F_\mfp}\varphi_\mfp^s\delta_\mfp(\indi_H)(wn_xg)dx.
\]
By \cite[Corollary 8.4]{blanco2015anticyclotomic}, for any $h\in \Ind_P^G(|\cdot|)^0$, we have 
$C_T^0\int_{T(F_\mfp)}h(\imath(t))d^\ti t=\int_{F_\mfp}h(wn_x)dx$,
for some non-zero constant $C_T^0$. For any $t\in T(F_\mfp)$, let us consider $h_t\in \Ind_P^G(|\cdot|)^0$:
\[
h_t\ipa{\bbm x_1&y\\&x_2\ebm\imath(z)}=\left|\frac{x_1}{x_2}\right|\chi_{\mfp,s-1}\ipa{\frac{(z-1)^2}{z(\tau_\mfp-\ovl\tau_\mfp)^2}}\indi_{tH}(z\inv),
\]
extended by zero outside $P\cdot\imath(T(F_\mfp))\subseteq G(F_\mfp)$.
Thus, using the above identities and Lemma \ref{lemma:mordelta},
\begin{eqnarray*}
M_s\varphi_\mfp^s\delta_\mfp(\indi_H)(\imath(t))&=&\int_{F_\mfp}\varphi_\mfp^s\delta_\mfp(\indi_H)\ipa{\bbm&-1\\1&x\ebm\bbm\frac{x+\ovl\tau_\mfp}{x+\tau_\mfp}\tau_\mfp-\ovl\tau_\mfp&\tau_\mfp\ovl\tau_\mfp\ipa{1-\frac{x+\ovl\tau_\mfp}{x+\tau_\mfp}}\\\frac{x+\ovl\tau_\mfp}{x+\tau_\mfp}-1&\tau_\mfp-\frac{x+\ovl\tau_\mfp}{x+\tau_\mfp}\ovl\tau_\mfp\ebm\imath\ipa{\frac{x+\tau_\mfp}{x+\ovl\tau_\mfp}}\imath(t)}dx\\
&=&\int_{F_\mfp}\left|\frac{1}{(x+\tau_\mfp)(x+\ovl\tau_\mfp)}\right|\chi_{\mfp,s-1}\ipa{\frac{1}{(x+\tau_\mfp)(x+\ovl\tau_\mfp)}}\cdot \indi_H\ipa{t^{-1}\ipa{\frac{x+\ovl\tau_\mfp}{x+\tau_\mfp}}}dx=\int_{F_\mfp}h_t(wn_x)dx\\
&=&C_T^0\int_{T(F_\mfp)}h_t(\imath(h))d^\ti h=C_T^0\int_{T(F_\mfp)}\chi_{\mfp,s-1}\ipa{\frac{(h-1)^2}{h(\tau_\mfp-\ovl\tau_\mfp)^2}}\indi_{tH}(h\inv)d^\ti h,
\end{eqnarray*}
and the computation of $M_s\varphi_\mfp^s\delta_\mfp(\indi_H)(\imath(t))$ amounts to finding the integral $I_{H}(t):=\int_H\chi_{\mfp, s-1}\ipa{\frac{(h t\inv-1)^2}{ht\inv}}d^\ti h$. 

On the one hand, if $t\not\in H_n$, $1+\frac{(t^2-h)(h-1)}{(t-1)^2h}=\frac{(ht^{-1}-1)^2}{ht^{-1}}\frac{t}{(t-1)^2}\in 1+\mfp^n$, hence,
$I_{H_n}(t)=\chi_{\mfp, s-1}\ipa{\frac{(t-1)^2}{t}}{\rm vol}(H_n)$.

On the other hand, if $t\in H_n$ we use the bijection
$\mfo_{F_\mfp}/\mfp\stackrel{\simeq}{\rightarrow} H_n/H_{n+1}$, $a\mapsto h_a:=[1+\beta a\varpi_\mfp^n]$ to obtain
\[
I_{H_n}(t)=\int_{H_n}\chi_{\mfp, s-1}\ipa{\frac{(h -1)^2}{h}}d^\ti h=\int_{H_{n+1}}\chi_{\mfp, s-1}\ipa{\frac{(h -1)^2}{h}}d^\ti h+\sum_{a\in (\mfo_{F_\mfp}/\mfp)^\ti}\int_{H_{n+1}}\chi_{\mfp, s-1}\ipa{\frac{(h_ah -1)^2}{h_ah}}d^\ti h.
\]
Recall that in the above expression we have identified $h_a$ with $\lambda_{\tilde h_a}/\bar\lambda_{\tilde h_a}=(1+\beta a\varpi_\mfp^n)/(1+\bar\beta a\varpi_\mfp^n)$. 
Thus, assuming that $n$ is big enough, we have
\begin{eqnarray*}
\sum_{a\in (\mfo_{F_\mfp}/\mfp)^\ti}\int_{H_{n+1}}\chi_{\mfp, s-1}\ipa{\frac{(h_ah -1)^2}{h_ah}}d^\ti h&=&
C_\beta^s\chi_{\mfp, s-1}(\varpi_\mfp)^{2n}\sum_{a\in (\mfo_{F_\mfp}/\mfp)^\ti}\int_{H_{n+m}}\chi_{\mfp, s-1}\ipa{\frac{\ipa{\frac{h-1}{(\beta-\bar\beta)\varpi_\mfp^n}+a\frac{(\beta h-\bar \beta)}{(\beta-\bar\beta)}}^2}{(1+\beta a\varpi_\mfp^n)(1+\bar\beta a\varpi_\mfp^n)h}}d^\ti h\\
&=&C_\beta^s\cdot\alpha_\mfp^{-2n} q_\mfp^{-2n(s-1)}\cdot{\rm vol}(H_{n}),
\end{eqnarray*}
since $q_\mfp{\rm vol}(H_{n+1})={\rm vol}(H_{n})$.
Applying a simple induction we conclude that
\[
I_{H_n}(t)=\int_{H_n}\chi_{\mfp, s-1}\ipa{\frac{(h -1)^2}{h}}d^\ti h=
   C_\beta^s\cdot\ipa{1-\frac{1}{q_\mfp}}\cdot {\rm vol}(H_n)\ipa{\sum_{k\geq 0}\alpha_\mfp^{-2n-2k} q_\mfp^{-2(n+k)(s-1)-k}},  
\]
if $t\in H_n$, and the result follows.
\end{proof}

Notice that, if $\chi_\mfp^2\neq|\cdot|$, then we have
\begin{eqnarray*}
\int_{T(F_\mfp)}\xi_\mfp(t)\langle t\delta_\mfp(\indi_H),\delta_\mfp(\indi_H)\rangle d^\ti t&=&\int_{T(F_\mfp)}\int_{T(F_\mfp)}\xi_\mfp(t)\varphi^s_{\mfp}\delta_\mfp(\indi_H)(\imath(\tau t))\cdot M_s\varphi^s_{\mfp}\delta_\mfp(\indi_H)(\imath(\tau))d^\ti td^\ti\tau\mid_{s=0}\\
&=&{\rm vol}(H)\int_{T(F_\mfp)}\xi_\mfp(\tau\inv)\cdot M_s\varphi^s_{\mfp}\delta_\mfp(\indi_H)(\imath(\tau))d^\ti\tau\mid_{s=0}.
\end{eqnarray*}
Hence, we can apply the previous result to obtain:
\begin{proposition}\label{EulerFapp}
   If $\chi_\mfp^2\neq |\cdot|$ then, for $H=H_n\in T(F_\mfp)$ small enough, we have
\[
\int_{T(F_\mfp)}\xi_\mfp(t)\langle \imath(t)\delta_\mfp(\indi_H),J\delta_\mfp(\indi_H)\rangle d^\ti t=\frac{C^0_T\cdot C_\beta^0\cdot{\rm vol}(H_0) \cdot{\rm vol}(H)^2\cdot L(1,\psi_{K_\mfp})\cdot L(-1,\chi_\mfp^2)}{\xi_\mfp(t_J)\cdot L(2,\chi_\mfp^{-2})\cdot \chi_{\mfp, -1}(\tau_\mfp-\ovl\tau_\mfp)^{2}}\cdot\left\{\begin{array}{ll}
        \varepsilon(\pi_\mfp,\xi_\mfp),&n_{\xi_\mfp}= 0;\\
        \alpha_\mfp^{-n_\xi}\beta_\mfp^{n_\xi},&n_{\xi_\mfp}> 0.
    \end{array}\right.
\] 
 where $\alpha_\mfp=\chi_\mfp(\varpi_\mfp)^{-1}$, $\beta_\mfp=\chi_\mfp(\varpi_\mfp)q_\mfp$ and
    \[
    \varepsilon(\pi_\mfp,\xi_\mfp):=\frac{L(1,\xi_\mfp\cdot\chi^{-1}_\mfp\circ{\rm N}_{K_\mfp/F_\mfp})}{L(0,\xi_\mfp\cdot\chi_\mfp\circ{\rm N}_{K_\mfp/F_\mfp})}=\left\{\begin{array}{ll}
        \frac{\ipa{1-\xi_\mfp^{-1}(\varpi_\mfp)\alpha_\mfp^{-1}}\ipa{1-\xi_\mfp(\varpi_\mfp)\alpha_\mfp^{-1}}}{\ipa{1-\xi_\mfp^{-1}(\varpi_\mfp)\beta_\mfp^{-1}}\ipa{1-\xi_\mfp(\varpi_\mfp)\beta_\mfp^{-1}}},&T(F_\mfp)\mbox{ splits};\\
        \frac{1-\alpha_\mfp^{-2}}{1-\beta_\mfp^{-2}},&T(F_\mfp)\mbox{ inert};\\
        \frac{1-\xi_\mfp(\varpi_\mfp^{\frac{1}{2}})\alpha_\mfp^{-1}}{1-\xi_\mfp(\varpi_\mfp^{\frac{1}{2}})\beta_\mfp^{-1}},&T(F_\mfp)\mbox{ ramified}.
    \end{array}\right.
    \]
\end{proposition}
\begin{proof}
Combining proposition \ref{propoonM0} 
with lemma \ref{lemmalocint2} we obtain that, if ${\rm Re}(s)$ is small enough, 
\[
\frac{\int_{T(F_\mfp)}\xi_\mfp^{-1}(\tau)\cdot M_s\varphi^s_{\mfp}\delta_\mfp(\indi_{H_n})(\imath(\tau))d^\ti\tau}{C^0_T\cdot {\rm vol}(H_n)\cdot\chi_{\mfp, s-1}(\tau_\mfp-\ovl\tau_\mfp)^{-2}}=C_\beta^s\cdot{\rm vol}(H_0)\cdot\left\{\begin{array}{ll}
        \frac{L(1-s,\xi_\mfp\cdot\chi^{-1}_\mfp\circ{\rm N}_{K_\mfp/F_\mfp})(1-\alpha_\mfp^{2}q_\mfp^{2s-2}) L(1,\psi_{K_\mfp})}{L(s,\xi_\mfp\cdot\chi_\mfp\circ{\rm N}_{K_\mfp/F_\mfp})(1-\alpha_\mfp^{-2}q_\mfp^{1-2s})},&n_{\xi_\mfp}= 0;\\
        \frac{\alpha_\mfp^{-2n_\xi}q_\mfp^{n_\xi(1-2s)}(1-\alpha_\mfp^{2}q_\mfp^{2s-2})L(1,\psi_{K_\mfp})}{1-\alpha_\mfp^{-2}q^{1-2s}_\mfp},&n_{\xi_\mfp}> 0.
    \end{array}\right.
    \]
        and the result then follows from the computations performed above.
\end{proof}

\subsection{Norms of $p$-stabilized test vectors}\label{normtestvec}

Write $U_0(1)=\PGL_2(\mfo_{F_\mfp})$ and $U_0(\mfp)\subset U_0(1)$ the usual subgroup of upper triangular matrices modulo $\mfp$.
As above, let $\chi_\mfp$ be an unramified character. We can consider the $U_0(\mfp)$-invariant vector
\begin{equation}\label{deff0}
    f_0\bsm a&b\\c&d\esm=\chi_\mfp\ipa{\frac{ad-bc}{c^2}}\cdot\indi_{\mfo_{F_\mfp}}\ipa{\frac{d}{c}},\qquad\mbox{or}\qquad f_0(bk)=\chi_{\mfp}(b)\indi_{U_0(1)\setminus U_0(\mfp)}(k);\quad b\in P;\quad k\in U_0(1).
\end{equation}
Notice that $U_\mfp f_0=\alpha_\mfp f_0$, where $U_\mfp$ is the usual Hecke operator.
We aim to compute $\langle f_0,f_0\rangle$. 
By definition, $\varphi_\mfp^s(f_0)\bsm a&b\\c&d\esm=\chi_{\mfp,s}\ipa{\frac{ad-bc}{c^2}}\cdot\indi_{\mfo_{F_\mfp}}\ipa{\frac{d}{c}}$ and if we consider $k=\bsm a&b\\c&d\esm\in U_0(1)$ then we have
\begin{eqnarray*}    
M_s\varphi_\mfp^sf_0(k)&=&\int_{F_\mfp}\varphi_\mfp^sf_0(wn_xk)dx=\int_{F_\mfp}\varphi_\mfp^sf_0\ipa{\bbm-c&-d\\a+cx&b+dx\ebm k} dx=\int_{F_\mfp}\chi_{\mfp,s}(a+cx)^{-2}\indi_{\mfo_{F_\mfp}}\ipa{\frac{b+dx}{a+cx}} dx\\
&=&\int_{\mfo_{F_\mfp}}\indi_{\mfo_{F_\mfp}}\ipa{\frac{b+dx}{a+cx}} dx+\int_{F_\mfp\setminus \mfo_{F_\mfp}}\chi_{\mfp,s}(a+cx)^{-2}\indi_{\mfo_{F_\mfp}}\ipa{\frac{b+dx}{a+cx}} dx\\
&=&{\rm vol}(\mfo_{F_\mfp})\indi_{U_0(\mfp)}(k)+{\rm vol}\ipa{\mfo_{F_\mfp}\setminus\ipa{-\frac{a}{c}+\mfp}}\indi_{U_0(1)\setminus U_0(\mfp)}(k)+\int_{F_\mfp\setminus\mfo_{F_\mfp}}\chi_{\mfp, s}(x)^{-2}\indi_{U_0(1)\setminus U_0(\mfp)}(k) dx\\
&=&{\rm vol}(\mfo_{F_\mfp})\indi_{U_0(\mfp)}(k)+\frac{q_\mfp-1}{q_\mfp}{\rm vol}(\mfo_{F_\mfp})\indi_{U_0(1)\setminus U_0(\mfp)}(k)+{\rm vol}(\mfo_{F_\mfp})\indi_{U_0(1)\setminus U_0(\mfp)}(k)\left(\frac{\chi_\mfp(\varpi_\mfp)^2q_\mfp^{1-2s}}{1-\chi_\mfp(\varpi_\mfp)^2q_\mfp^{1-2s}}\frac{q_\mfp-1}{q_\mfp}\right)\\
&=&{\rm vol}(\mfo_{F_\mfp})\ipa{\indi_{U_0(\mfp)}(k)+\indi_{U_0(1)\setminus U_0(\mfp)}(k)\ipa{\frac{1-q_\mfp^{-1}}{1-\chi_\mfp(\varpi_\mfp)^2q_\mfp^{1-2s}}}}.
\end{eqnarray*}
Thus, $h_1=f_0\cdot M_0f_0\in \Ind_P^G(|\cdot|)^0$ satisfies $h_1(bk)=\frac{{\rm vol}(\mfo_{F_\mfp})L(-1,\chi_\mfp^2)}{\zeta_\mfp(1)}|b|\indi_{U_0(1)\setminus U_0(\mfp)}(k)$. We conclude that,
\[
C_T^0\langle f_0,f_0\rangle=C_T^0\int_{T(F_\mfp)}h_1(\imath(t))d^\ti t=\int_{F_\mfp}h_1(wn_x)dx=\int_{F_\mfp}h_1\bbm&-1\\1&x\ebm dx=\frac{L(-1,\chi_\mfp^2)}{\zeta_\mfp(1)}{\rm vol}(\mfo_{F_\mfp})^2.
\]
\begin{remark}\label{remarkonconstants}
    The value of ${\rm vol}(\mfo_{F_\mfp})$ can be computed in terms of the constant $C_T^0$. Indeed, by definition,   
$C_T^0\int_{T(F_\mfp)}h(\imath(t))d^\ti t=\int_{F_\mfp}h(wn_x)dx$ for any $h\in \Ind_P^G(|\cdot|)^0$. Hence, if we choose $h=\delta_\mfp(\indi_H)$ for some $H$ small enough, we obtain by lemma \ref{lemma:mordelta}
\[
C_T^0{\rm vol}(H)=C_T^0\int_{T(F_\mfp)}\delta_\mfp(\indi_H)(\imath(t))d^\ti t=\int_{F_\mfp}\delta_\mfp(\indi_H)\bbm&-1\\1&x\ebm dx=\int_{F_\mfp}|(\tau_\mfp+x)(\ovl\tau_\mfp+x)|^{-1}\cdot \indi_H\ipa{\frac{\bar\tau_\mfp+x}{\tau_\mfp+x}}dx.
\]
Assume that $H=H_n=(\mfo_{F_\mfp}+\beta\mfp^n)^\ti/\mfo_{F_\mfp}^\ti$. Since 
\[
\bar\tau_\mfp+x=x+\frac{\tau_\mfp+\bar\tau_\mfp}{2}+\frac{\bar\tau_\mfp-\tau_\mfp}{2}=\ipa{x+\frac{\tau_\mfp+\bar\tau_\mfp}{2}}\ipa{1+\ipa{x+\frac{\tau_\mfp+\bar\tau_\mfp}{2}}^{-1}\ipa{\frac{\bar\tau_\mfp-\tau_\mfp}{2}}},
\]
we deduce that $\frac{\bar\tau_\mfp+x}{\tau_\mfp+x}\in H$ if and only if $\ipa{x+\frac{\tau_\mfp+\bar\tau_\mfp}{2}}^{-1}\ipa{\frac{\bar\tau_\mfp-\tau_\mfp}{\bar\beta-\beta}}\in \mfp^n$. If we denote the $\mfp$-adic valuation of $\frac{\bar\tau_\mfp-\tau_\mfp}{\bar\beta-\beta}$ as $n_0$, we obtain
\begin{eqnarray*}
\frac{C_T^0L(1,\psi_{K_\mfp}){\rm vol}(H_0)}{q_\mfp^{n}}&=&C_T^0{\rm vol}(H_n)=\int_{\ipa{x+\frac{\tau_\mfp+\bar\tau_\mfp}{2}}^{-1}\in \mfp^{n-n_0}}|(\tau_\mfp+x)(\ovl\tau_\mfp+x)|^{-1}dx=\int_{y^{-1}\in \mfp^{n-n_0}}\left|y^2-\frac{(\tau-\bar\tau)^2}{4}\right|^{-1}dy\\
&=&\int_{y^{-1}\in \mfp^{n-n_0}}|y|^{-2}dy =\sum_{k\geq n-n_0}q_\mfp^{-k}\int_{\varpi_\mfp^{-k}\mfo_{F_\mfp}^\ti}\frac{dy}{|y|}=\frac{q_\mfp^{n_0-n}}{1-q_\mfp^{-1}}\int_{\mfo_{F_\mfp}^\ti}\frac{dy}{|y|}=q_\mfp^{n_0-n}{\rm vol}(\mfo_{F_\mfp})
\end{eqnarray*}
Thus, we obtain
\begin{equation}\label{volOvsCT}
    C_T^0\left|\frac{\bar\tau_\mfp-\tau_\mfp}{\bar\beta-\beta}\right|L(1,\psi_{K_\mfp}){\rm vol}(H_0)={\rm vol}(\mfo_{F_\mfp}).
\end{equation}
\end{remark}

\begin{corollary}\label{coroLIcomp}
 For $H\subseteq T(F_\mfp)$ small enough, we have
\[
\int_{T(F_\mfp)}\xi_\mfp(t)\frac{\langle \imath(t)\delta_\mfp(\indi_H),J\delta_\mfp(\indi_H)\rangle}{\langle f_0,f_0\rangle} d^\ti t=\chi_{\mfp}\left(\frac{\beta-\ovl\beta}{\tau_\mfp-\ovl\tau_\mfp}\right)^{2}\frac{\zeta_\mfp(1)}{L(1,\psi_{K_\mfp}) L(2,\chi_\mfp^{-2})}\frac{{\rm vol}(H)^2}{\xi_\mfp(t_J){\rm vol}(H_0)}\cdot\left\{\begin{array}{ll}
        \varepsilon(\pi_\mfp,\xi_\mfp),&n_{\xi_\mfp}= 0;\\
        \alpha_\mfp^{-n_\xi}\beta_\mfp^{n_\xi},&n_{\xi_\mfp}> 0.
    \end{array}\right.
\] 
\end{corollary}
\begin{proof}
    If $\chi_\mfp^2=|\cdot|$, then 
    \[
        \int_{T(F_\mfp)}\xi_\mfp(t)\langle \imath(t)\delta_\mfp(\indi_H),\delta_\mfp(\indi_H)\rangle d^\ti t=\int_{T(F_\mfp)}\int_{T(F_\mfp)}\xi_\mfp(t)\imath(t)\delta_\mfp(\indi_H)(\imath(z))\delta_\mfp(\indi_H)(\imath(z))d^\ti td^\ti z={\rm vol}(H)^2.
    \]
    Similarly, since $f_0^2(bk)=|b|\indi_{K\setminus K_0(\mfp)}(k)$, we have
\[
C_T^0\langle f_0,f_0\rangle=C_T^0\int_{T(F_\mfp)}f_0(\imath(t))^2d^\ti t=\int_{F_\mfp}f_0^2(wn_x)dx=\int_{F_\mfp}f_0^2\bbm&-1\\1&x\ebm dx={\rm vol}(\mfo_{F_\mfp}).
\]
Thus, the result follows from remark \ref{remarkonconstants} in this case. For $\chi_\mfp^2\neq|\cdot|$ the result follows from proposition \ref{EulerFapp}, remark \ref{remarkonconstants} and the above computations.
\end{proof}
\begin{remark}
    Notice that by definition $(\beta-\bar\beta)^2\mfo_{F_\mfp}$ is the relative discriminant of $K_\mfp/F_\mfp$ when it is a field. Otherwise $(\beta-\bar\beta)^2\mfo_{F_\mfp}=\mfo_{F_\mfp}$.
\end{remark}

\subsection{Distinguished periods in the Steinberg case}\label{disting}

  Throughout this section we will assume that $T(F_\mfp)$ splits, $\chi_\mfp=1$ and therefore $V_\mfp=\tno{St}_\C(F_\mfp)$ is Steinberg.
Notice that we have a distinguished element $\hat f_0\in \tno{St}_\C(F_\mfp)$;
\[
\hat f_0\bsm a&b\\c&d\esm=\indi_{\mfo_{F_\mfp}}\ipa{\frac{c\ovl\tau_\mfp+d}{c\tau_\mfp+d}}.
\]
By lemma \ref{lemma:mordelta} we have that $\hat f_0(\imath(t))=\indi_{\mfo_{F_\mfp}}(t^{-1})$, 
moreover, $\hat f_0=\bsm \tau_\mfp&\ovl\tau_\mfp\\ 1&1\esm f_0$.
In this appendix we also aim to compute 
\[
I=\int_{T(F_\mfp)}\langle\pi_\mfp(t)\hat f_0,\pi_\mfp(J_\mfp)\hat f_0\rangle_\mfp d^\times t
\]

Since $\imath(t)\bsm \tau_\mfp&\ovl\tau_\mfp\\ 1&1\esm=\bsm \tau_\mfp&\ovl\tau_\mfp\\ 1&1\esm\bsm t&\\ &1\esm$ and $J\bsm \tau_\mfp&\ovl\tau_\mfp\\ 1&1\esm=\bsm \tau_\mfp&\ovl\tau_\mfp\\ 1&1\esm\bsm &1\\ -t_J&\esm$, we have by the above computation of $M_s\varphi_\mfp^sf_0(k)$
\begin{eqnarray*}
  I&=&  \int_{F_\mfp^\times}\langle\pi_\mfp\bsm t&\\ &1\esm f_0,\pi_\mfp\bsm &1\\ -t_J&\esm f_0\rangle_\mfp d^\times t=\int_{F_\mfp^\times}\int_{T(F_\mfp)}\pi_\mfp\ipa{\bsm &1\\ -t_J&\esm^{-1}\bsm t&\\ &1\esm} f_0(\imath(z)) M_0f_0(\imath(z))d^\times z d^\times t\\
&=&\frac{1}{C_T^0}\int_{F_\mfp^\times}\int_{F_\mfp}\pi_\mfp\bsm &1\\ -t&\esm f_0\bsm &-1\\1&x\esm M_0f_0\bsm &-1\\1&x\esm d x d^\times t\\
&=&\frac{1}{C_T^0}\ipa{\int_{F_\mfp^\times}\int_{\mfo_{F_\mfp}}f_0\bsm t&\\-tx&1\esm M_0f_0\bsm &-1\\1&x\esm d x d^\times t+\int_{F_\mfp^\times}\int_{F_\mfp\setminus\mfo_{F_\mfp}}|x|^{-2}f_0\bsm t&\\-tx&1\esm M_0f_0\bsm 1&\\x^{-1}&1\esm d x d^\times t}\\
&=&\frac{{\rm vol}(\mfo_{F_\mfp})}{C_T^0}\ipa{\int_{F_\mfp^\times}-q_\mfp^{-1}\int_{\mfo_{F_\mfp}}\indi_{\mfo_{F_\mfp}}\ipa{t^{-1}x^{-1}} d x d^\times t+\int_{F_\mfp^\times}\int_{F_\mfp\setminus\mfo_{F_\mfp}}|x|^{-2}\indi_{\mfo_{F_\mfp}}\ipa{t^{-1}x^{-1}} d x d^\times t}.
\end{eqnarray*}
Notice that ${\rm vol}\ipa{\mfo_{F_\mfp}\cap t^{-1}(F_\mfp\setminus\mfp)}
={\rm vol}(\mfo_{F_\mfp})(1-q_\mfp^{-1}|t|^{-1})\indi_{\mfo_{F_\mfp}}(t^{-1})$. Moreover,
\begin{eqnarray*}
    \int_{F_\mfp\setminus\mfo_{F_\mfp}}|x|^{-2}\indi_{\mfo_{F_\mfp}}\ipa{t^{-1}x^{-1}} d x&=&\indi_{\mfp^{-1}}(t^{-1})\sum_{k\geq 1}q_\mfp^{-k}\int_{\mfo_{F_\mfp}^{\ti}}\frac{d x}{|x|}+\indi_{F_\mfp\setminus\mfp^{-1}}(t^{-1})\sum_{|t|\geq q_\mfp^{-k}}q_\mfp^{-k}\int_{\mfo_{F_\mfp}^{\ti}}\frac{d x}{|x|}\\
    &=&{\rm vol}(\mfo_{F_\mfp})\ipa{q_\mfp^{-1}\indi_{\mfp^{-1}}(t^{-1})+|t|\indi_{\mfp^{2}}(t)}.
\end{eqnarray*}
Hence, we obtain
\begin{eqnarray*}
  I
  &=&\frac{{\rm vol}(\mfo_{F_\mfp})^2}{C_T^0}\ipa{\int_{F_\mfp^\times}q_\mfp^{-2}|t|^{-1}\indi_{\mfo_{F_\mfp}}(t^{-1}) d^\times t+\int_{F_\mfp^\times}q_\mfp^{-1}\ipa{\indi_{\mfp^{-1}}(t^{-1})-\indi_{\mfo_{F_\mfp}}(t^{-1})}d^\ti t+\int_{F_\mfp^\times}|t|\indi_{\mfp^{2}}(t) d^\times t}\\
  &=&\frac{{\rm vol}(\mfo_{F_\mfp})^2{\rm vol}(H_0)}{C_T^0}\ipa{q_\mfp^{-2}\sum_{k\geq 0} q_\mfp^{-k}+q_\mfp^{-1}+\sum_{k\geq 2} q_\mfp^{-k}}=\frac{{\rm vol}(\mfo_{F_\mfp})^2{\rm vol}(H_0)}{q_\mfp C_T^0}\ipa{\frac{q_\mfp^{-1}+1}{1-q_\mfp^{-1}}}=\frac{C_T^0{\rm vol}(H_0)^3(q_\mfp^{-1}+1)}{q_\mfp\left|\bar\tau_\mfp-\tau_\mfp\right|^{-2}(1-q_\mfp^{-1})^3}.
\end{eqnarray*}

\printbibliography

\end{document}